\newcommand\fullwidthdisplay{\displayindent0pt \displaywidth\columnwidth}
  \everydisplay\expandafter{\expandafter\fullwidthdisplay\the\everydisplay}
\newtheorem{thm}{Theorem}
\newtheorem{lemm}{Lemma}[thm]
\newtheorem{prop}{Proposition}
\newtheorem{assm}{Assumption}
\let\left\mleft
\let\right\mright
\newcommand{\norm}[1]{\left\lVert#1\right\rVert}
\begin{document}

\title{Asymptotically compatibility of a class of numerical schemes for a nonlocal traffic flow model \thanks{This rsearch is supported in part by US NSF DMS-1937254, DMS-2012562, and CNS-2038984.
}}


\author{Kuang Huang\thanks{Department of Applied Physics and Applied Mathematics, 
 Columbia University, New York, NY 10027; {\tt kh2862@columbia.edu}} \and Qiang Du\thanks{Department of Applied Physics and Applied Mathematics,  and Data Science Institute, Columbia University, New York, NY 10027; {\tt qd2125@columbia.edu}}}

\maketitle

\begin{abstract}

This paper considers numerical discretization of a nonlocal conservation law modeling vehicular traffic flows involving nonlocal inter-vehicle interactions.
The nonlocal model involves an integral over the range measured by a horizon parameter and it recovers the local Lighthill-Richards-Whitham model as the nonlocal horizon parameter goes to zero. Good numerical schemes for simulating these parameterized nonlocal traffic flow models should be robust with respect to the change of the model parameters but this has not been systematically investigated in the literature. We fill this gap through a careful study of a class of finite volume numerical schemes with suitable discretizations of the nonlocal integral, which include several schemes proposed in the literature and their variants. Our main contributions are to demonstrate the asymptotically compatibility of the schemes, which includes both the uniform convergence of the numerical solutions to the unique solution of nonlocal continuum model for a given positive horizon parameter and the convergence to the unique entropy solution of the local model as the mesh size and the nonlocal horizon parameter go to zero simultaneously. It is shown that with the asymptotically compatibility, the schemes can provide robust numerical computation under the changes of the nonlocal horizon parameter.
\end{abstract}

\begin{keywords}
traffic flow, nonlocal LWR, finite volume schemes, asymptotically compatibility, nonlocal-to-local limit
\end{keywords}

\begin{AMS}
65M08, 35L65, 76A30, 35R09, 65R20, 65D30
\end{AMS}

\section{Introduction}
In this work, 
we study the numerical discretization of a nonlocal analog of the classical Lighthill-Richards-Whitham (LWR) model \cite{lighthill1955kinematic,richards1956shock}. The latter, given by 
\begin{align}
	\partial_t \rho(t,x) + \partial_x \left( \rho(t,x) v(\rho(t,x)) \right) =0 , \label{eq:lwr}
\end{align}
for a density $\rho=\rho(t,x)$ and a velocity $v= v( \rho (t, x))$,
has been widely used in the study of traffic flows. 
To study the dynamics of traffic flows in the presence of nonlocal inter-vehicle interactions \cite{Blandin2016,goatin2016well},
the following \emph{nonlocal LWR model} has been developed in recent years
\begin{align}
	\partial_t \rho(t,x) + \partial_x \left( \rho(t,x) v_\delta( \rho(t,\cdot), t, x)
 \right) = 0, \quad x\in\mathbb{R}, \, t>0. \label{eq:nonlocal_lwr}
\end{align}
In contrast to \eqref{eq:lwr}, the nonlocal LWR model \eqref{eq:nonlocal_lwr} adopts a modeling assumption that in a fleet of vehicles driving on a highway, each vehicle decides its driving speed not by the local information but rather through a nonlocal weighted average of traffic information within a road segment of length $\delta>0$ ahead of the vehicle's current location. More specifically, 
the velocity $v_\delta= v_\delta( \rho, t, x)$ 
takes on the form
\begin{align}
\label{eq:nonlocal_velocity}
  v_\delta( \rho(t,\cdot), t, x)
  =v(q_\delta( \rho(t,\cdot), t, x ) ), \quad\text{with}\quad
  q_\delta( \rho(t,\cdot), t, x )
 = \int_0^\delta \rho(t,x+s) w_\delta(s) \,ds,
 \end{align}
 where the integral kernel $w=w_\delta(s)$ is assumed to be a probability density function defined on the interval $[0,\delta]$. 
Alternatively, one may also consider the nonlocal velocity given by \cite{friedrich2022conservation}
\begin{align}
\label{eq:nonlocal_velocity-a}
v_\delta( \rho(t,\cdot), t, x)
  = \int_0^\delta  v(\rho(t,x+s) ) w_\delta(s) \,ds.
 \end{align}
The equation \eqref{eq:nonlocal_lwr} is solved with the initial condition:
\begin{align}\label{eq:ini_data}
	\rho(0,x) = \rho_0(x), \quad x\in\mathbb{R}, 
\end{align}
where $\rho_0: \, \mathbb{R}\to[0,1]$ represents the initial traffic density.
The case $\rho_0 \equiv 0$ indicates that the road is empty and the case $\rho_0 \equiv 1$ corresponds to fully congested traffic.

The equation \eqref{eq:nonlocal_lwr} leads to a nonlocal conservation law due to the nonlocal dependence of the velocity on the density.
Consider the rescaled kernel $w_\delta(s)=w(s/\delta)/\delta$ such that $w_\delta$ converges to a Dirac point mass as $\delta\to0$, it is clear that the nonlocal LWR model \eqref{eq:nonlocal_lwr}, with either choices of the velocity given by \eqref{eq:nonlocal_velocity} or \eqref{eq:nonlocal_velocity-a},
formally recovers the local model \eqref{eq:lwr} by taking the limit $\delta\to 0$.
For more rigorous analysis of the nonlocal LWR model \eqref{eq:nonlocal_lwr}, we refer to a number of existing studies in the literature, including the model well-posedness \cite{Blandin2016,goatin2016well,bressan2019traffic,colombo2021local,friedrich2022conservation}, traveling wave solutions \cite{ridder2019traveling,shen2018traveling}, the asymptotic stability of uniform flows \cite{huang2022stability}, and nonlocal-to-local limit as $\delta\to0$ \cite{bressan2019traffic,bressan2020entropy,colombo2021local,colombo2022nonlocal,keimer2019approximation,coclite2020general,colombo2019singular,friedrich2022conservation,keimer2022singular}. 

The numerical discretization of the nonlocal LWR model \eqref{eq:nonlocal_lwr} has also been studied in \cite{Blandin2016,goatin2016well,friedrich2018godunov,friedrich2019maximum,Chalons2018,colombo2021role}.
However, there was no systematic study on the dependence of numerical solutions on the parameter $\delta$ and their behavior under the limit $\delta\to0$.
In the present work, we aim to fill this gap by designing and analysing finite volume numerical schemes for the nonlocal LWR model \eqref{eq:nonlocal_lwr} such that they are able to correctly resolve both the nonlocal model for a given $\delta>0$ and  also the local model \eqref{eq:lwr} when $\delta\to0$.
Such schemes are in the spirit of \emph{asymptotically compatible} schemes, which can offer robust numerical computation under the changes of $\delta$; see \cite{tian2014asymptotically,tian2020asymptotically} for discussions on asymptotically compatibility of numerical discretizations of more general nonlocal models. 
The main contributions of our work here are the rigorous proofs of the asymptotically compatibility of the schemes, which include both the uniform convergence of the numerical solutions to the unique solution of nonlocal continuum model for a given positive horizon parameter and the convergence to the unique entropy solution of the local model as the mesh size and the nonlocal horizon parameter go to zero simultaneously. These results are established for the first time in the literature.
The main ingredients of the proofs are the compactness in the $\mathbf{BV}_{\mathrm{loc}}$ space and the entropy admissibility of numerical solutions.
The analysis provided in \cite{goatin2016well,Blandin2016} was based on a priori $\mathbf{L}^\infty$ and total variation estimates for a fixed $\delta>0$, but the resulting total variation bound blows up to infinity as $\delta\to0$.
In this work, a novelty is our use of a different approach to prove that numerical solutions produced by the proposed schemes satisfy an one-sided Lipschitz condition when $\delta$ is close to zero, which enforces both the boundedness of total variation and the entropy admissibility. Such an approach has been used to study numerical schemes for the local model \eqref{eq:lwr}, see \cite{tadmor1984large,brenier1988discrete}, but to our best knowledge, has not been used for nonlocal models.
Numerical experiments are also reported to complement the theoretical investigation.  Note that while the current work is motivated by modeling traffic flows with nonlocal vehicle interactions,  
let us mention that conservation laws with nonlocal fluxes were also studied in the modeling of pedestrian traffic \cite{Colombo2012,burger2020non}, sedimentation \cite{Betancourt2011}, and material flow on conveyor belts \cite{Goettlich2014,rossi2020well}; see \cite{Aggarwal2015,Amorim2015,Colombo2018,Goatin2019,Chiarello2019,Berthelin2019,karafyllis2020analysis,friedrich2022lyapunov} for more relevant studies. 
Thus, our study here can be useful in the numerical simulations of a broad range of problems in various application domains.

To summarize the paper, in the remainder of this Section 1, after briefly describing the assumptions on the nonlocal model and some basic mathematical properties, we introduce the numerical discretization schemes and summarize the main theorems on their convergence behavior and the asymptotic compatibility. The detailed proofs of the main theorems are given in Section 2. We present results of some numerical experiments in Section 3 and offer some conclusions in Section 4.

\subsection{A review of well-posedness and nonlocal-to-local limit}
Let us first state some assumptions on the model.
\begin{assm}\label{assm:1}

(i) The nonlocal kernel is given by $w_\delta(s)=w(s/\delta)/\delta$ for $s\in[0,\delta]$, where $w=w(s)$ is a $\mathbf{C}^1$ smooth, strictly decreasing, and nonnegative  probability density function defined on $[0,1]$, and it satisfies the normalization condition $\int_0^1 w(s)\,ds=1$.\\
(ii) The velocity function is $v(\rho)=1-\rho$. Consequently,  \eqref{eq:nonlocal_velocity} and \eqref{eq:nonlocal_velocity-a} produce the same outcome.\\
(iii) The initial data $\rho_0 \in \mathbf{L}^\infty(\mathbb{R})$ and it satisfies $0\leq\rho_0(x)\leq1$ for all $x\in\mathbb{R}$. In addition, $\rho_0$ has bounded total variation.
\end{assm}

Concerning the mathematical analysis of the nonlocal LWR model \eqref{eq:nonlocal_lwr}, we recall that 
the existence and uniqueness of weak solutions have been shown with general choices of the nonlocal kernel, the velocity function, and the initial data, see for example, 
\cite{Blandin2016,goatin2016well,bressan2019traffic,colombo2021local}.  
For our case, the following proposition summarizes the known results in the above works.
\begin{prop}\label{prop:nonlocal_sol}
	Under Assumption~\ref{assm:1}, the nonlocal LWR model \eqref{eq:nonlocal_lwr} admits a unique weak solution $\rho \in \mathbf{L}^\infty\left([0,+\infty)\times\mathbb{R}\right)$ such that
	\begin{align}
\int_0^\infty\int_{\mathbb{R}}\rho(t,x)\partial_t\phi(t,x)+\rho(t,x)v_\delta
(\rho(t,\cdot), t, x)
)\partial_x\phi(t,x)\,dxdt+\int_{\mathbb{R}}\rho_0(x)\phi(0,x)\,dx=0,\label{eq:nonlocal_sol}
	\end{align}
	for all $\phi\in\mathbf{C}^1_{\mathrm{c}}\left([0,+\infty)\times\mathbb{R}\right)$, where $v_\delta
(\rho(t,\cdot), t, x)$ is given by \eqref{eq:nonlocal_velocity}. Moreover, the solution satisfies the maximum principle	\begin{align}\label{eq:maxm_principle}
	\inf_{x\in\mathbb{R}}\rho_0(x) \leq \rho(t,x) \leq \sup_{x\in\mathbb{R}}\rho_0(x), \quad (t,x)\in [0,+\infty)\times\mathbb{R}.
	\end{align}
\end{prop}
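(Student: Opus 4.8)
Proposition~\ref{prop:nonlocal_sol} is the specialization to Assumption~\ref{assm:1} of the by-now-standard well-posedness theory for nonlocal conservation laws, and I would organize the argument around three steps: a short-time fixed-point construction, an a priori maximum principle that globalizes it, and an $\mathbf{L}^1$-stability estimate for uniqueness. Throughout, $\delta>0$ is fixed, so $w_\delta(s)=w(s/\delta)/\delta$ is a fixed function with $\|w_\delta\|_{\mathbf{L}^\infty}$ and $\|w_\delta'\|_{\mathbf{L}^1}$ finite; keeping track of the (in fact $\delta$-blowing-up) size of the constants is unnecessary here --- that is precisely the task of the rest of the paper.

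For existence, the plan is to freeze the nonlocal argument. Given $\sigma\in\mathbf{C}([0,\tau];\mathbf{L}^1_{\mathrm{loc}}(\mathbb{R}))$ with $0\le\sigma\le1$, put $a_\sigma(t,x)=v\!\left(q_\delta(\sigma(t,\cdot),t,x)\right)=1-\int_0^\delta\sigma(t,x+s)w_\delta(s)\,ds$. Integrating by parts in $s$ shows $\partial_x a_\sigma(t,\cdot)\in\mathbf{L}^\infty$ uniformly in $t$ (its size involving only $w_\delta(0),\,w_\delta(\delta),\,\|w_\delta'\|_{\mathbf{L}^1}$ and $\|\sigma\|_{\mathbf{L}^\infty}\le1$), so $a_\sigma$ is Lipschitz in $x$, and clearly $0\le a_\sigma\le1$. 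The linear balance law $\partial_t\rho+\partial_x(\rho\,a_\sigma)=0$, $\rho(0,\cdot)=\rho_0$, then has a unique weak solution $\mathcal{T}[\sigma]$, expressible along the characteristics $\dot X=a_\sigma(t,X)$, from which one reads off $\mathcal{T}[\sigma]\ge0$ and, using $\rho_0\in\mathbf{BV}$, $\mathrm{TV}(\mathcal{T}[\sigma](t,\cdot))\le\mathrm{TV}(\rho_0)e^{Ct}$. The decisive observation is that $\sigma\mapsto a_\sigma$ is Lipschitz from $\mathbf{L}^1_{\mathrm{loc}}$ into $\mathbf{W}^{1,\infty}_x$ for fixed $\delta$; feeding this into a Gronwall estimate for the difference of two linear problems yields $\|\mathcal{T}[\sigma_1](t)-\mathcal{T}[\sigma_2](t)\|_{\mathbf{L}^1}\le C(\delta)\int_0^t\|\sigma_1(s)-\sigma_2(s)\|_{\mathbf{L}^1}\,ds$, so $\mathcal{T}$ is a contraction for $\tau$ small and its fixed point is a weak solution of \eqref{eq:nonlocal_lwr} on $[0,\tau]$. (Alternatively, one could build solutions as limits of monotone finite-volume approximations, which parallels the later analysis in the paper and delivers the maximum principle automatically.)

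For the maximum principle, the lower bound $\rho\ge0$ is immediate from the characteristic formula, and for $\rho\le1$ I would use that $q_\delta$ is an average of $[0,1]$-valued data, hence $v_\delta\in[0,1]$, together with a comparison principle for the frozen-coefficient transport equation (or with the $[0,1]$-invariance of a monotone scheme for it), obtaining \eqref{eq:maxm_principle} on $[0,\tau]$. Because the resulting bound $0\le\rho\le1$ is uniform in time, the Lipschitz and contraction constants of the previous step do not deteriorate as one re-solves on $[\tau,2\tau],[2\tau,3\tau],\dots$, so the local solution extends to all of $[0,+\infty)$, giving a global weak solution with the stated maximum principle.

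Uniqueness I would obtain from an $\mathbf{L}^1$-contraction / Kruzhkov-doubling argument adapted to the nonlocal coefficient: for two solutions $\rho,\tilde\rho$ valued in $[0,1]$, written quasilinearly as $\partial_t\rho+a_\rho\partial_x\rho=-\rho\,\partial_x a_\rho$, one estimates $\frac{d}{dt}\|\rho(t)-\tilde\rho(t)\|_{\mathbf{L}^1}$ and controls $\|\partial_x a_\rho-\partial_x a_{\tilde\rho}\|_{\mathbf{L}^\infty}$ by $\|\rho-\tilde\rho\|_{\mathbf{L}^1}$ via $w_\delta\in\mathbf{W}^{1,\infty}$, reaching $\frac{d}{dt}\|\rho(t)-\tilde\rho(t)\|_{\mathbf{L}^1}\le C(\delta)\|\rho(t)-\tilde\rho(t)\|_{\mathbf{L}^1}$ and hence $\rho\equiv\tilde\rho$ by Gronwall. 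The main obstacle across all three steps is the genuine source term $\rho\,\partial_x a_\rho$, which carries a spatial derivative of an averaged density: the estimates cannot be closed in $\mathbf{L}^\infty/\mathbf{L}^1$ alone but require simultaneously propagating a $\mathbf{BV}$ bound --- both to give the characteristic ODEs a meaning and to bound $\partial_x q_\delta$ --- and it is exactly the $\mathbf{C}^1$-regularity of $w$ (Assumption~\ref{assm:1}(i)) together with the affine form $v(\rho)=1-\rho$ (Assumption~\ref{assm:1}(ii)) that makes these bounds close for each fixed $\delta$, which is why the proposition may in the end simply be quoted from \cite{Blandin2016,goatin2016well,bressan2019traffic,colombo2021local}.
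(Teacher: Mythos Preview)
Your proposal is essentially correct and, importantly, matches what the paper actually does: the paper does \emph{not} prove Proposition~\ref{prop:nonlocal_sol} at all but states it as a summary of known results and cites \cite{Blandin2016,goatin2016well,bressan2019traffic,colombo2021local}, exactly as you conclude in your last sentence. Your three-step outline (fixed-point on a frozen-velocity linear problem, a priori invariant-region bound, $\mathbf{L}^1$-Gronwall for uniqueness) is precisely the strategy used in those references, so there is no discrepancy to discuss.

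One small point worth tightening: your maximum-principle step argues only $0\le\rho\le1$, whereas \eqref{eq:maxm_principle} asserts the sharper invariance of $[\inf\rho_0,\sup\rho_0]$. The frozen-coefficient comparison you invoke does not by itself yield this, because along characteristics the equation reads $\dot\rho=-\rho\,\partial_x a_\rho$ with a source of indefinite sign. The sharper bound genuinely uses that $w_\delta$ is \emph{decreasing} (Assumption~\ref{assm:1}(i)): at an interior maximum of $\rho$ one integrates $\partial_x q_\delta$ by parts and uses $w_\delta'\le0$ to force $\partial_t\rho\le0$ (and symmetrically at a minimum). This is how it is done in \cite{Blandin2016,goatin2016well}, often via the discrete maximum principle for a monotone scheme and passage to the limit --- which is the alternative route you also mention.
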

The convergence of solutions of the nonlocal LWR model \eqref{eq:nonlocal_lwr} as $\delta\to0$ has also been extensively studied. In the literature, it was usually assumed that the nonlocal kernel $w=w_\delta(s)$ is defined for $s\in[0,+\infty)$ and the nonlocal density is defined by
\begin{align*}
	q_\delta(\rho(t,\cdot),t,x) = \int_0^\infty \rho(t,x+s) w_\delta(s) \,ds.
\end{align*}
\cite{bressan2019traffic,bressan2020entropy} considered the exponential kernels $w_\delta(s)=\delta^{-1}e^{-\frac{s}{\delta}}$ and showed convergence from the solutions of the nonlocal model \eqref{eq:nonlocal_lwr} to the unique weak entropy solution of the local model \eqref{eq:lwr}, assuming that the initial data $\rho_0$ is uniformly positive. \cite{colombo2021local} generalized the convergence result for a class of nonlocal kernels with exponential decay rate but under one additional assumption that $\rho_0$ is one-sided Lipschitz continuous. In \cite{colombo2021local}, the authors also provided counterexamples to show that the uniform positivity of the initial data is essential to the convergence result.
In the subsequent works \cite{coclite2020general,colombo2022nonlocal,keimer2019approximation,friedrich2022conservation,keimer2022singular}, convergence results concerning the nonlocal quantity $q_\delta(\rho(t,\cdot),t,x)$ as $\delta\to0$ were given without assuming the initial data being uniformly positive.

In the present work, we adopt an approach similar as that in \cite{colombo2021local} and make the following additional assumption on the initial data, which basically requires the initial data to be uniformly positive and to have no negative jumps.  
\begin{assm}\label{assm:2}
The initial data $\rho_0$ satisfies
\begin{align}
	\rho_0(x) \geq \rho_{\mathrm{min}} > 0 \quad \forall x\in\mathbb{R}, \qquad -\frac{\rho_0(y)-\rho_0(x)}{y-x}\leq L \quad \forall x\neq y\in\mathbb{R}, \label{eq:ini_lip_const}
\end{align}
for some constants $\rho_{\mathrm{min}}>0$ and $L>0$.
\end{assm}

In our case, the same arguments as in \cite{colombo2021local} can be applied to give the nonlocal-to-local limit result, as stated in the following Proposition~\ref{prop:local_limit}, with very little modifications for compactly supported nonlocal kernels.

\begin{prop}\label{prop:local_limit}
	Suppose Assumptions~\ref{assm:1} and \ref{assm:2} are satisfied.
	As $\delta\to0$, the solution of the nonlocal LWR model \eqref{eq:nonlocal_lwr} converges in $\mathbf{L}^1_{\mathrm{loc}}([0,+\infty)\times\mathbb{R})$ to the weak entropy solution of the local model \eqref{eq:lwr} that satisfies
	\begin{align}
\int_0^\infty\int_{\mathbb{R}}\rho(t,x)\partial_t\phi(t,x)+\rho(t,x)v\left(\rho(t,x)\right)\partial_x\phi(t,x)\,dxdt+\int_{\mathbb{R}}\rho_0(x)\phi(0,x)\,dx=0,\label{eq:local_sol}
	\end{align}
	for all $\phi\in\mathbf{C}^1_{\mathrm{c}}\left([0,+\infty)\times\mathbb{R}\right)$, and
	\begin{align}
		-\frac{\rho(t,y)-\rho(t,x)}{y-x}\leq \frac{1}{2t} \quad \forall x\neq y\in\mathbb{R},\,t>0.\label{eq:oleinik}
	\end{align}
\end{prop}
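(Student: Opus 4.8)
The plan is to follow the argument of \cite{colombo2021local}, adapted to compactly supported kernels. Its ingredients are: (a) a priori bounds on the nonlocal solution $\rho_\delta$ that are uniform in $\delta$, the essential one being a one-sided Lipschitz (Oleinik-type) estimate; (b) compactness of $\{\rho_\delta\}_{\delta>0}$ in $\mathbf{L}^1_{\mathrm{loc}}$; (c) passage to the limit, identifying every limit point as a weak solution of \eqref{eq:lwr} that obeys \eqref{eq:oleinik}; and (d) uniqueness, via Oleinik's criterion, to upgrade subsequential convergence to convergence of the whole family. Below, $\rho_\delta$ is the solution of \eqref{eq:nonlocal_lwr}--\eqref{eq:ini_data} with horizon $\delta$, and $q_\delta=q_\delta(\rho_\delta(t,\cdot),t,x)$, $v_\delta=1-q_\delta$. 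Only step (a) is affected by the finite interaction range, and only through a boundary term at $s=\delta$ that is absent for the exponential kernels on $[0,+\infty)$ used in \cite{bressan2019traffic,colombo2021local}.

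\emph{Uniform estimates.} By Proposition~\ref{prop:nonlocal_sol} and Assumption~\ref{assm:2}, $\rho_{\min}\le\rho_\delta(t,x)\le1$ for all $(t,x)$, uniformly in $\delta$; since averaging against $w_\delta$ preserves these bounds, $\rho_{\min}\le q_\delta\le1$, so the nonlocal velocity stays bounded and bounded away from $0$. The crux is a one-sided Lipschitz estimate
\begin{align*}
-\frac{\rho_\delta(t,y)-\rho_\delta(t,x)}{y-x}\ \le\ \Lambda_\delta(t)\qquad\forall\,x\neq y\in\mathbb R,
\end{align*}
where $\sup_{\delta>0}\Lambda_\delta$ is locally bounded on $[0,+\infty)$ (bounded near $t=0$ by the initial constant $L$) and $\limsup_{\delta\to0}\Lambda_\delta(t)\le\tfrac1{2t}$. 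One proves it by writing the (nonlocal) transport equation satisfied by a spatial difference quotient of $\rho_\delta$ and running a Grönwall/Riccati comparison: the lower bound $\rho_\delta\ge\rho_{\min}$ and the monotonicity of $w$ give the nonlocal coupling term the favorable sign, and the boundary contribution produced by the finite horizon --- the only new ingredient relative to \cite{colombo2021local} --- is absorbed using $w_\delta(\delta)=w(1)/\delta$ together with $\rho_\delta\le1$. Combined with the $\mathbf{L}^\infty$ bound, this yields, on every bounded interval $I$ and every $t>0$, a uniform-in-$\delta$ bound $\mathrm{TV}_I(\rho_\delta(t,\cdot))\le 1+2|I|\,\Lambda_\delta(t)$.

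\emph{Compactness and passage to the limit.} For $\psi\in\mathbf{C}^1_{\mathrm c}(\mathbb R)$, \eqref{eq:nonlocal_lwr} gives $\bigl|\tfrac{d}{dt}\int_{\mathbb R}\rho_\delta\psi\,dx\bigr|=\bigl|\int_{\mathbb R}\rho_\delta v_\delta\,\psi'\,dx\bigr|\le C(\psi)$ uniformly in $\delta$, so $t\mapsto\rho_\delta(t,\cdot)$ is uniformly $\mathbf{L}^1_{\mathrm{loc}}$-equicontinuous in time; with the uniform local BV bound above, the Fréchet--Kolmogorov (or Aubin--Lions--Simon) criterion makes $\{\rho_\delta\}$ precompact in $\mathbf{L}^1_{\mathrm{loc}}([0,+\infty)\times\mathbb R)$. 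Let $\rho_{\delta_k}\to\rho$ in $\mathbf{L}^1_{\mathrm{loc}}$ along some $\delta_k\to0$. Since $w_{\delta_k}$ is a probability density supported in $[0,\delta_k]$, for every compact $K$ and a slightly enlarged compact $K'$,
\begin{align*}
\norm{q_{\delta_k}(\rho_{\delta_k})-\rho_{\delta_k}}_{\mathbf{L}^1(K)}\ \le\ \sup_{0\le s\le\delta_k}\norm{\rho_{\delta_k}(\cdot+s)-\rho_{\delta_k}}_{\mathbf{L}^1(K)}\ \le\ \delta_k\,\mathrm{TV}_{K'}(\rho_{\delta_k})\ \longrightarrow\ 0,
\end{align*}
so (recalling $\rho_{\delta_k}\to\rho$) $q_{\delta_k}\to\rho$, hence $v_{\delta_k}\to v(\rho)$ and, using the uniform $\mathbf{L}^\infty$ bound, $\rho_{\delta_k}v_{\delta_k}\to\rho\,v(\rho)$ in $\mathbf{L}^1_{\mathrm{loc}}$. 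Passing to the limit in \eqref{eq:nonlocal_sol} yields \eqref{eq:local_sol}, and passing to the limit in the one-sided Lipschitz estimate yields \eqref{eq:oleinik}.

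\emph{Uniqueness.} The flux $\rho\mapsto\rho v(\rho)=\rho(1-\rho)$ is strictly concave with $f''\equiv-2$, and for such a flux \eqref{eq:oleinik} is precisely Oleinik's one-sided condition; hence any weak solution of \eqref{eq:lwr} with initial data $\rho_0$ which additionally satisfies \eqref{eq:oleinik} is the unique entropy solution of \eqref{eq:lwr}. The limit $\rho$ is therefore independent of the subsequence, and the full family $\rho_\delta$ converges in $\mathbf{L}^1_{\mathrm{loc}}([0,+\infty)\times\mathbb R)$ as $\delta\to0$. I expect the uniform one-sided Lipschitz estimate to be the main obstacle: one must check that the extra boundary term from the finite horizon neither spoils the sign structure of the nonlocal transport of difference quotients nor the degeneration of the bound to $1/(2t)$ as $\delta\to0$. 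The remaining steps are routine.
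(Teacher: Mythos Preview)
Your proposal is correct and follows exactly the route the paper indicates: the paper does not give a self-contained proof of Proposition~\ref{prop:local_limit} but simply refers to \cite{colombo2021local}, remarking that ``the same arguments \dots\ can be applied \dots\ with very little modifications for compactly supported nonlocal kernels''. Your outline---uniform $\mathbf{L}^\infty$ and one-sided Lipschitz bounds, $\mathbf{BV}_{\mathrm{loc}}$ compactness, passage to the limit in the weak form, and identification via Oleinik's uniqueness---is precisely that strategy, and you correctly flag the boundary term at $s=\delta$ from the compact support as the one place where the adaptation is needed.
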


In Proposition~\ref{prop:local_limit}, the inequality \eqref{eq:oleinik}, which is known as the Oleinik's entropy condition, is used to select the unique entropy admissible solution of the scalar conservation law \eqref{eq:lwr}, see \cite{lefloch2002hyperbolic}.
As a constraint on the one-sided Lipschitz constant of the solution, the entropy condition \eqref{eq:oleinik} yields that the solution can only have positive jumps.

\subsection{Finite volume approximations}
Now let us consider the numerical discretization of the nonlocal LWR model \eqref{eq:nonlocal_lwr}. With finite volume approximations, the numerical solution is defined as a piecewise constant function:
\begin{align}\label{eq:num_sol}
\rho(t,x)=\sum_{j\in\mathbb{Z}}\sum_{n=0}^\infty\rho_j^n\mathbf{1}_{\mathcal{C}_j\times\mathcal{T}^n}(t,x),
\end{align}
where $\mathcal{C}_j=(x_{j-1/2},x_{j+1/2})$, $\mathcal{T}^n=(t^n,t^{n+1})$ are spatial and temporal cells. The grid points are $x_j=jh$ and $t^n=n\tau$, where $h$ and $\tau$ are spatial and temporal mesh sizes.
At the initial time $t^0=0$, the initial data is discretized as:
\begin{align}\label{eq:ini_data_discrete}
    \rho_j^0 = \frac1h \int_{\mathcal{C}_j} \rho_0(x) \,dx, \quad j\in\mathbb{Z} .
\end{align}
Denote $F_{j-1/2}^n$ and $F_{j+1/2}^n$ the numerical fluxes across cell boundaries $x_{j-1/2}$ and $x_{j+1/2}$ during time $t^n$ to $t^{n+1}$. Specifying appropriate boundary fluxes, the finite volume scheme is:
\begin{align}
\label{eq:finite_volume}
	\rho_j^{n+1}=\rho_j^n+
 \lambda
 (F_{j-1/2}^n-F_{j+1/2}^n),
\end{align}
where the CFL ratio $\lambda = \tau/h$ is taken to be a fixed constant.
To specify the numerical fluxes, we need to evaluate the nonlocal density $q_\delta(\rho(t,\cdot),t,x)$ given in \eqref{eq:nonlocal_velocity}.
Let us take
\begin{align}\label{eq:discrete_nonlocal_density}
	q_j^n = \sum_{k=0}^{m-1} w_k \rho_{j+k}^n,
\end{align}
where $m=\lceil\frac{\delta}{h}\rceil$ is the number of cells involved in the nonlocal integral, and $\{w_k\}_{k=0}^{m-1}$ is a set of numerical quadrature weights, such that:
\begin{align}
	w_{\delta,h}(s)=\sum_{k=0}^{m-1}w_k\mathbf{1}_{[kh, (k+1)h]}(s), \quad s\in[0,\delta],
\end{align}
is a piecewise constant approximation of the nonlocal kernel $w_\delta=w_\delta(s)$.

Given the discretized nonlocal densities $\{q_j^n\}_{j\in\mathbb{Z}}^{n\geq0}$, the nonlocal fluxes in \eqref{eq:finite_volume} can be constructed in a number of different ways. Let us mention the following examples.
\begin{itemize}
\item In \cite{Blandin2016,goatin2016well}, a Lax-Friedrichs type scheme was developed with the numerical fluxes:
\begin{align}\label{eq:goatin_flux}
	F_{j-1/2}^n=\frac12 \left[\rho_{j-1}^n v\left(\sum_{k=0}^{m-1}w_k\rho_{j+k-1}^n\right) + \rho_j^n v\left(\sum_{k=0}^{m-1}w_k\rho_{j+k}^n\right) \right] + \frac{\alpha}{2}(\rho_{j-1}^n-\rho_j^n),
\end{align}
where $\alpha>0$ is a numerical viscosity constant and the numerical quadrature weights are given by the left endpoint values: 
\begin{align}
	\mbox{[Left endpoint]} \quad & w_k = w_\delta(kh)h, \quad k=0,\cdots,m-1. \label{eq:quad_weight_left} 
\end{align}
\item In \cite{friedrich2018godunov}, a Godunov type scheme was proposed with the numerical fluxes defined by:
\begin{align}\label{eq:upwind_flux}
	F_{j-1/2}^n = \rho_{j-1}^n v\left(\sum_{k=0}^{m-1}w_k\rho_{j+k}^n\right),
\end{align}
where the numerical quadrature weights are given by the exact quadrature:
\begin{align}
	\mbox{[Exact quadrature]} \quad  w_k = \int_{kh}^{\min\{(k+1)h,\delta\}} w_\delta(s)\,ds, \quad k=0,\cdots,m-1. \label{eq:quad_weight_exact} 
\end{align}
\item Inspired by both \eqref{eq:goatin_flux} and \eqref{eq:upwind_flux}, we also consider the following Lax-Friedrichs type fluxes:
\begin{align}\label{eq:lf_flux_new}
	F_{j-1/2}^n=\frac12 \left(\rho_{j-1}^n + \rho_j^n \right) v\left(\sum_{k=0}^{m-1}w_k\rho_{j+k}^n\right) + \frac{\alpha}{2}(\rho_{j-1}^n-\rho_j^n),
\end{align}
where the numerical quadrature weights are given by either the left endpoint values or the exact quadrature.
\end{itemize}

In the present work, we consider a family of finite volume schemes:
\begin{align}\label{eq:nonlocal_lwr_num}
	\rho_j^{n+1} &= \mathcal{H} \left( \rho_{j-1}^n,\rho_j^n,\rho_{j+1}^n,\cdots,\rho_{j+m}^n \right)=\rho_j^n+
 \lambda
 (F_{j-1/2}^n-F_{j+1/2}^n)\\
	&= \rho_j^n + \lambda \left[ g(\rho_{j-1}^n,\rho_j^n,q_{j-1}^n,q_j^n) - g(\rho_j^n,\rho_{j+1}^n,q_j^n,q_{j+1}^n) \right], \label{eq:nonlocal_lwr_num_2}
\end{align}
where $q_j^n$ is given by \eqref{eq:discrete_nonlocal_density}, $\lambda=\tau/h$ is the CFL ratio, and $g=g(\rho_L,\rho_R,q_L,q_R)$ is a \emph{numerical flux function} that depends on both local densities $\rho_L,\rho_R$ and nonlocal densities $q_L,q_R$.
We remark that, by taking $q_L=\rho_L$ and $q_R=\rho_R$, $g=g(\rho_L,\rho_R,\rho_L,\rho_R)$ becomes a numerical flux function for the local model \eqref{eq:lwr}, and the respective numerical scheme:
\begin{align}\label{eq:local_lwr_num}
	\rho_j^{n+1} &= \rho_j^n + \lambda \left[ g(\rho_{j-1}^n,\rho_j^n,\rho_{j-1}^n,\rho_j^n) - g(\rho_j^n,\rho_{j+1}^n,\rho_j^n,\rho_{j+1}^n) \right],
\end{align}
can be viewed as the local counterpart of \eqref{eq:nonlocal_lwr_num}-\eqref{eq:nonlocal_lwr_num_2}.

It is worthwhile to mention that the aforementioned schemes, with numerical fluxes in \eqref{eq:goatin_flux}, \eqref{eq:upwind_flux}
and \eqref{eq:lf_flux_new} respectively, all belong to the above family \eqref{eq:nonlocal_lwr_num}-\eqref{eq:nonlocal_lwr_num_2}, with the numerical flux functions given by:
\begin{subequations}
\begin{align}
	\mbox{[Lax-Friedrichs]} \quad &g(\rho_L, \rho_R, q_L, q_R) = \frac12( \rho_L v(q_L) + \rho_R v(q_R) ) + \frac\alpha2 (\rho_L - \rho_R), \label{eq:g_func_LxF} \\
	\mbox{[Godunov]} \quad &g(\rho_L, \rho_R, q_L, q_R) = \rho_L v(q_R), \label{eq:g_func_Godunov} \\
	\mbox{[modified Lax-Friedrichs]} \quad &g(\rho_L, \rho_R, q_L, q_R) = \frac12( \rho_L + \rho_R ) v(q_R) + \frac\alpha2 (\rho_L - \rho_R), \label{eq:g_func_LxF_new}
\end{align}
\end{subequations}
respectively. Now we make the following assumptions on the numerical quadrature weights, the numerical flux function, and 
the CFL ratio $\lambda$.

\begin{assm}\label{assm:3}
The numerical quadrature weights $\{w_k\}_{0\leq k\leq m-1}$ satisfy
\begin{align}\label{eq:weight_monotone}
    w_\delta(kh)h \geq w_k \geq w_\delta((k+1)h)h \quad \mathrm{and} \quad w_k - w_{k+1} \geq c m^{-2},
\end{align}
for some constant $c>0$ only depending on the kernel function $w=w(s)$.
Moreover, $\{w_k\}_{0\leq k\leq m-1}$ satisfy the normalization condition:
\begin{align}\label{eq:normalization_condition}
    \sum_{k=0}^{m-1} w_k = 1.
\end{align}
\end{assm}

\begin{assm}\label{assm:4}
(i) The numerical flux function $g$ is a quadratic function.
(ii) When $\rho_L=\rho_R$ and $q_L=q_R$, $g(\rho_L,\rho_L,q_L,q_L)=\rho_L(1-q_L)$.
(iii) Denote $\gamma_{ij}$, $1\leq i,j \leq4$ the second order partial derivatives of $g$, they satisfy
\begin{align}
    &\gamma_{11}=\gamma_{12}=\gamma_{22}=0, \quad \gamma_{33}=\gamma_{34}=\gamma_{44}=0, \\    &\gamma_{13},\gamma_{23},\gamma_{14},\gamma_{24}\leq0, \quad \gamma_{13}+\gamma_{23}+\gamma_{14}+\gamma_{24}=-1.
\end{align}
(iv) Denote $\theta^{(i)}$, $1\leq i \leq4$ the first order partial derivatives of $g$ with respect to its four arguments $\rho_L,\rho_R,q_L,q_R$. 
For any $0\leq \rho_L,\rho_R,q_L,q_R\leq1$:
\begin{align}
    &\theta^{(1)}(q_L,q_R)\geq0, \ \theta^{(2)}(q_L,q_R)\leq0, \ \theta^{(3)}(\rho_L,\rho_R)\leq0, \ \theta^{(4)}(\rho_L,\rho_R)\leq0, \\
     &\theta^{(1)}(q_L,q_R) + \theta^{(3)}(\rho_L,\rho_R) + 2(\gamma_{13}+\gamma_{23}) \geq 0, \ \theta^{(2)}(q_L,q_R) - 2(\gamma_{23}+\gamma_{24}) \leq 0, \\
    &\theta^{(3)}(\rho_L,\rho_R) + \theta^{(4)}(\rho_L,\rho_R) \leq -\min\{\rho_L,\rho_R\}.
\end{align}
\end{assm}

\begin{assm}\label{assm:5}
Given the notation $\theta^{(i)}$, $1 \leq i \leq 4$ in Assumption~\ref{assm:4}, $\lambda$
 satisfies 
 \begin{align}
     \lambda \sum_{i=1}^4 \norm{\theta^{(i)}}_\infty < 1.
 \end{align}
\end{assm}

\subsection{Main results}\label{sec:main_result}
This section summarizes the main results. We note that all the theorems are subject to Assumptions~\ref{assm:1}-\ref{assm:5}. To clarify the notation, we denote:
\begin{itemize}
\item $\rho^\delta$: the continuum solution of the nonlocal LWR model \eqref{eq:nonlocal_lwr};
\item  $\rho^0$: the continuum solution of the local LWR model \eqref{eq:lwr};
\item  $\rho^{\delta,h}$: the numerical solution of the nonlocal LWR model; and
\item  $\rho^{0,h}$: the numerical solution of the local LWR model. 
\end{itemize}
There are two sets of parameters: the nonlocal horizon parameter $\delta$ and the mesh size parameter $h$.
In the present work, we are interested in establishing relations between those solutions when $\delta\to0$ and $h\to0$ along various limiting paths, as shown in Fig.~\ref{fig:main_diagram}.

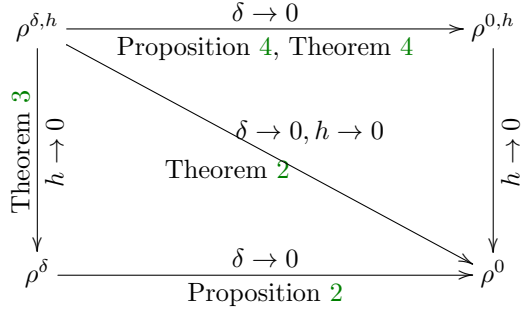
\begin{figure}[htbp]
	\centering	\centerline{\xymatrix@R+4.4pc@C+10.5pc{
    \rho^{\delta, h} \ar[r]^{\mbox{$\delta\to0$}}_{\mbox{Proposition~\ref{prop:scheme_local_limit},\ Theorem~\ref{thm:ap}}} \ar[dr]^{\mbox{$\delta\to0,h\to0$}}_{\mbox{Theorem~\ref{thm:ac}}} \ar[d]^{\rotatebox{90}{$h\to0$}}_{\rotatebox{90}{Theorem~\ref{thm:numerical_convergence}}} & {\rho^{0,h}} \ar[d]^{\rotatebox{90}{$h\to0$}} \\ {\rho^\delta} \ar[r]_{\mbox{Proposition~\ref{prop:local_limit}}}^{\mbox{$\delta\to0$}} & {\rho^0}
    }}
    \vspace{-0.2cm}
    \caption{Diagram of various limiting paths}
    \label{fig:main_diagram}
\end{figure}

\begin{enumerate}
	\item The numerical convergence for the nonlocal model: $\rho^{\delta,h}\to\rho^{\delta}$ when $h\to0$ with fixed $\delta>0$ can be proved following the approach in \cite{goatin2016well}. The proof is based on a priori $\mathbf{L}^\infty$ and total variation estimates of the numerical solution. In Theorem~\ref{thm:numerical_convergence}, we provide a stronger result stating uniform numerical convergence with respect to $\delta$.
	\item The numerical convergence for the local model: $\rho^{0,h}\to\rho^0$ when $h\to0$ is a classical result, see for example \cite{leveque2002finite}.
	\item The nonlocal-to-local limit: $\rho^{\delta}\to\rho^0$ when $\delta\to0$ is given in Proposition~\ref{prop:local_limit}.
	\item The nonlocal-to-local limit of numerical discretizations: $\rho^{\delta,h}\to\rho^{0,h}$ as $\delta\to0$ with fixed $h$ follows from Proposition~\ref{prop:scheme_local_limit}. We also provide a uniform convergence result in Theorem~\ref{thm:ap}.
\end{enumerate}
To complete the convergence diagram in Fig.~\ref{fig:main_diagram}, one would ask whether $\rho^{\delta,h}\to\rho^0$ when both $\delta\to0$ and $h\to0$ simultaneously. If that is the case, we say that the numerical scheme \eqref{eq:nonlocal_lwr_num}-\eqref{eq:nonlocal_lwr_num_2} is \emph{asymptotically compatible} \cite{tian2014asymptotically,tian2020asymptotically} with its local limit.

Our key contribution is to prove the asymptotically compatibility of the proposed scheme \eqref{eq:nonlocal_lwr_num}-\eqref{eq:nonlocal_lwr_num_2}, which is given in Theorem~\ref{thm:ac}.
The proof is base on the a priori $\mathbf{L}^\infty$ and total variation estimates given in Theorem~\ref{thm:a_priori_estimates}, which are uniform to the nonlocal horizon parameter $\delta$.

\begin{thm}\label{thm:a_priori_estimates}
Under Assumptions~\ref{assm:1}-\ref{assm:5}, 
and that
\begin{align}\label{eq:delta_condition}
	0< \delta \leq \delta_0 \doteq \frac{c\rho_{\mathrm{min}}}{2Lw(0)},
\end{align}
where the constant $c$ is as in \eqref{eq:weight_monotone} and the constants $\rho_{\mathrm{min}}$ and $L$ are as in \eqref{eq:ini_lip_const}.
The numerical solution $\rho^{\delta,h}$ produced by the scheme \eqref{eq:nonlocal_lwr_num}-\eqref{eq:nonlocal_lwr_num_2} satisfies the maximum principle
\begin{align}\label{eq:numerical_maxm_principle}
	\inf_{x\in\mathbb{R}}\rho_0(x) \leq \rho^{\delta,h}(t,x) \leq \sup_{x\in\mathbb{R}}\rho_0(x), \quad (t,x)\in [0,+\infty)\times\mathbb{R}.
\end{align}
Moreover, the total variation of the numerical solution in space $\mathrm{TV}(\rho^{\delta,h}(t,\cdot))$ is a non-increasing function of $t\in[0,+\infty)$, and
\begin{align}\label{eq:numerical_tvd}
	\mathrm{TV}(\rho^{\delta,h}; \, [0,T]\times\mathbb{R}) \leq T\cdot\mathrm{TV}(\rho_0) \quad \forall T>0.
\end{align}
\end{thm}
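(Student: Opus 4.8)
The plan is to establish all three assertions simultaneously by induction on the time level $n$, carrying along an auxiliary \emph{discrete one-sided Lipschitz bound}. Let $G$ denote the set of bi-infinite grid functions $(u_j)_{j\in\mathbb{Z}}$ satisfying both $\inf_x\rho_0(x)\le u_j\le\sup_x\rho_0(x)$ and $u_{j+1}-u_j\ge -Lh$ for all $j$, with $L$ as in \eqref{eq:ini_lip_const}; the claim is that $\rho^{\delta,h}(t^n,\cdot)\in G$ for every $n$. The base case $n=0$ is immediate: $0\le\rho_0\le 1$ gives the maximum principle for the cell averages \eqref{eq:ini_data_discrete}, and averaging $\rho_0(x+h)-\rho_0(x)\ge -Lh$ over $\mathcal{C}_j$ gives the one-sided Lipschitz bound at $n=0$.

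The heart of the argument is to show that the one-step map $\mathcal{H}$ of \eqref{eq:nonlocal_lwr_num} is \emph{monotone} on $G$, i.e.\ $\partial\mathcal{H}/\partial\rho_{j+l}^n\ge 0$ for each of its $m+2$ arguments $l=-1,0,\dots,m$. I would compute these partial derivatives explicitly. The derivative in the central argument $\rho_j^n$ stays non-negative thanks to the CFL condition Assumption~\ref{assm:5}, and the two extreme arguments $\rho_{j-1}^n$, $\rho_{j+m}^n$ are handled directly from the sign conditions on the $\theta^{(i)}$, the combination inequalities of Assumption~\ref{assm:4}(iv), and $0\le w_0\le 1$. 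The delicate cases are the interior indices $1\le l\le m-1$, where $\rho_{j+l}^n$ enters $\mathcal{H}$ only through the discrete nonlocal densities $q_{j-1}^n,q_j^n,q_{j+1}^n$ with coefficients $w_k$: the possibly negative contributions there are $-\lambda w_k\times(\text{bounded quantities})$ with $w_k\le w(0)h/\delta$ by Assumption~\ref{assm:3}, and—because $g$ is quadratic with $\gamma_{33}=\gamma_{34}=\gamma_{44}=0$—the mismatch between the gradients of $g$ at the two flux arguments $(\rho^n_{j-1},\rho^n_j,q^n_{j-1},q^n_j)$ and $(\rho^n_j,\rho^n_{j+1},q^n_j,q^n_{j+1})$ is controlled by the jumps $\rho^n_j-\rho^n_{j-1}$ and $\rho^n_{j+1}-\rho^n_j$, which are $\ge -Lh$ by the inductive hypothesis. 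These are dominated by a genuinely positive term $\gtrsim\lambda(w_{k-1}-w_k)\rho_{\mathrm{min}}\ge\lambda c\,m^{-2}\rho_{\mathrm{min}}$ (using $w_{k-1}-w_k\ge c\,m^{-2}$ from Assumption~\ref{assm:3}, the lower bound $\min\{\rho_L,\rho_R\}\ge\rho_{\mathrm{min}}$, and the combination inequalities of Assumption~\ref{assm:4}(iii)--(iv)); since $m\asymp\delta/h$, the required inequality $w(0)\tfrac{h}{\delta}\,Lh\lesssim c\,m^{-2}\rho_{\mathrm{min}}$ reduces exactly to the threshold $\delta\le\delta_0=c\rho_{\mathrm{min}}/(2Lw(0))$ in \eqref{eq:delta_condition}. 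This is where the smallness hypothesis is consumed.

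Granting monotonicity of $\mathcal{H}$ on $G$, the rest is comparatively routine. Consistency, $\mathcal{H}(c,\dots,c)=c$ for any constant $c$ (immediate from the flux-difference form together with $\sum_kw_k=1$), combined with monotonicity yields the maximum principle \eqref{eq:numerical_maxm_principle} at level $n+1$. For the one-sided Lipschitz bound at level $n+1$, set $D_j^n=\rho_{j+1}^n-\rho_j^n$; subtracting the scheme at $j$ from the scheme at $j+1$, using that $g$ is quadratic (so $g(a+d)-g(a)=\nabla g(\text{midpoint})\cdot d$ exactly), and eliminating the differences of the $q_j^n$ via $q_{j+1}^n-q_j^n=\sum_kw_kD_{j+k}^n$, one obtains a representation $D_j^{n+1}=\sum_l c_{j,l}^nD_{j+l}^n$; invoking once more the sign structure of the $\gamma_{ij}$ and $\delta\le\delta_0$, one checks that this combination keeps $D_j^{n+1}\ge -Lh$, which closes the induction. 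Finally, since $\mathcal{H}$ is order-preserving on $G$, conservative ($\sum_j\rho_j^{n+1}=\sum_j\rho_j^n$ by telescoping of the fluxes), and $G$ is closed under pointwise $\max$ and $\min$ (a one-line check on the one-sided Lipschitz condition), a Crandall--Tartar argument gives $\sum_j|\rho_j^{n+1}-\sigma_j^{n+1}|\le\sum_j|\rho_j^n-\sigma_j^n|$ for any two numerical solutions; taking $\sigma^n$ to be the one-cell spatial shift of $\rho^n$ shows that $\mathrm{TV}(\rho^{\delta,h}(t,\cdot))$ is non-increasing in $t$, hence $\int_0^T\mathrm{TV}(\rho^{\delta,h}(t,\cdot))\,dt\le T\,\mathrm{TV}(\rho_0)$; and since $|\rho_j^{n+1}-\rho_j^n|\le\lambda|F_{j+1/2}^n-F_{j-1/2}^n|$ with $g$ globally Lipschitz on $[0,1]^4$ and $\mathrm{TV}(q^n)\le\mathrm{TV}(\rho^n)$, the time-variation contribution to $\mathrm{TV}(\rho^{\delta,h};[0,T]\times\mathbb{R})$ is bounded in the same manner, giving \eqref{eq:numerical_tvd}.

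The step I expect to be the real obstacle is the pair of places where $\delta\le\delta_0$ is used—verifying monotonicity of $\mathcal{H}$ at the interior stencil nodes and propagating the one-sided Lipschitz bound—since both require tracking how the spread of the quadrature weights (measured by $w_{k-1}-w_k\ge c\,m^{-2}$) competes against the discrete nonlocal radius $\sim\delta/h$ and against the downward jumps of the solution. The algebra is heavy, but the sign conditions on the second derivatives $\gamma_{ij}$ in Assumption~\ref{assm:4} are precisely what make these competitions balance, and pinning down the constants is what fixes the threshold $\delta_0$.
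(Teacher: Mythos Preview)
Your overall strategy is sound and closely parallels the paper's, but with one organizational difference and one technical point worth flagging. The paper does \emph{not} prove the maximum principle via monotonicity; instead it proves it first and separately (Lemma~\ref{lemm:sol_range_induction}) by a splitting trick, writing $\mathcal{H}(\rho^n)-\rho_{\min}=\Delta\mathcal{H}_1+\Delta\mathcal{H}_2$ where $\Delta\mathcal{H}_1$ replaces only the first two arguments by $\rho_{\min}$ and $\Delta\mathcal{H}_2$ replaces the rest, and then shows each piece is nonnegative. This avoids needing the one-sided Lipschitz bound for the maximum principle. Your approach of carrying both bounds in a single induction and deducing the maximum principle from monotonicity on $G$ also works (indeed the paper eventually shows monotonicity once the one-sided Lipschitz bound is in hand), and is arguably cleaner; the paper's route has the advantage that the maximum principle holds without the $\delta\le\delta_0$ restriction.

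The technical point: your representation $D_j^{n+1}=\sum_l c_{j,l}^n D_{j+l}^n$ is not quite right as stated. Because $g$ is quadratic, the second difference of the fluxes produces genuine quadratic terms in the $D$'s; in the paper's computation one finds
\[
r_j^{n+1}=\sum_{-1\le k\le m} c_{j,k}^n\, r_{j+k}^n \;-\; 2\lambda(\gamma_{13}+\gamma_{14})(r_j^n)^2 \;-\; 2\lambda(\gamma_{23}+\gamma_{24})(r_{j+1}^n)^2,
\]
with $\sum_k c_{j,k}^n=1$ only \emph{after} these quadratic terms are split off. Since $\gamma_{13}+\gamma_{14}\le 0$ and $\gamma_{23}+\gamma_{24}\le 0$, the quadratic terms are nonnegative and only help the lower bound $r_j^{n+1}\ge -Lh$, so your conclusion survives; but the argument needs these terms made explicit, and the verification that all $c_{j,k}^n\ge 0$ under $\delta\le\delta_0$ (which is exactly the computation $p_{j,k}^n\ge m^{-2}(c\rho_{\min}-2w(0)\delta L)\ge 0$) is where the threshold is actually consumed.
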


\begin{thm}\label{thm:ac}
Under Assumptions~\ref{assm:1}-\ref{assm:5},
when $\delta\to0$ and $h\to0$ simultaneously,
the numerical solution $\rho^{\delta,h}$ produced by the scheme \eqref{eq:nonlocal_lwr_num}-\eqref{eq:nonlocal_lwr_num_2}  converges in $\mathbf{L}^1_{\mathrm{loc}}([0,+\infty)\times\mathbb{R})$ to the weak entropy solution $\rho^0$ of the local model \eqref{eq:lwr} as defined in Proposition~\ref{prop:local_limit}.
\end{thm}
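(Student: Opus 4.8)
\emph{Proof strategy.} The plan is the classical Lax--Wendroff route — compactness, identification of the limit as a weak solution, and entropy selection — carried out with the extra care that every estimate be uniform in the horizon $\delta$. Concretely: (a) use the $\delta$-uniform bounds of Theorem~\ref{thm:a_priori_estimates} to extract an $\mathbf{L}^1_{\mathrm{loc}}$-convergent subsequence $\rho^{\delta_k,h_k}\to\bar\rho$; (b) show the discrete nonlocal average $q^{\delta,h}$ has the \emph{same} limit $\bar\rho$; (c) pass to the limit in the discrete weak formulation to get that $\bar\rho$ solves \eqref{eq:local_sol}; (d) propagate a discrete one-sided Lipschitz estimate to obtain Oleinik's condition \eqref{eq:oleinik} for $\bar\rho$, so that by uniqueness $\bar\rho=\rho^0$; (e) conclude that the whole family converges, since the limit does not depend on the subsequence. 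An alternative, essentially equivalent, route is the commuting-diagram argument of Fig.~\ref{fig:main_diagram}: combine the $\delta$-uniform numerical convergence $\rho^{\delta,h}\to\rho^\delta$ of Theorem~\ref{thm:numerical_convergence} with the nonlocal-to-local limit $\rho^\delta\to\rho^0$ of Proposition~\ref{prop:local_limit} and invoke the triangle inequality.

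\emph{Compactness.} By the maximum principle \eqref{eq:numerical_maxm_principle}, $0\le\rho^{\delta,h}\le1$ uniformly. The space--time total variation bound \eqref{eq:numerical_tvd} is uniform in $\delta$, and the scheme \eqref{eq:nonlocal_lwr_num_2} together with the $\mathbf{L}^\infty$ bound, the CFL condition (Assumption~\ref{assm:5}), and the Lipschitz continuity of the quadratic $g$ on $[0,1]^4$ gives $\mathbf{L}^1$-equicontinuity in time, $\|\rho^{\delta,h}(t,\cdot)-\rho^{\delta,h}(s,\cdot)\|_{\mathbf{L}^1(K)}\lesssim |t-s|+\tau$ on compact $K$. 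A Helly/Aubin-type compactness argument then yields a subsequence (not relabelled) $\rho^{\delta_k,h_k}\to\bar\rho$ in $\mathbf{L}^1_{\mathrm{loc}}([0,\infty)\times\mathbb{R})$ with $0\le\bar\rho\le1$ and $\bar\rho(t,\cdot)\in\mathbf{BV}_{\mathrm{loc}}$.

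\emph{Identification as a weak solution.} Writing $q_j^n-\rho_j^n=\sum_{k}w_k(\rho^n_{j+k}-\rho^n_j)$ and using $\sum_k w_k=1$ with $m=\lceil\delta/h\rceil$, a telescoping/Fubini estimate gives $\|q^{\delta,h}(t,\cdot)-\rho^{\delta,h}(t,\cdot)\|_{\mathbf{L}^1(\mathbb{R})}\le(\delta+h)\,\mathrm{TV}(\rho_0)\to0$, so $q^{\delta_k,h_k}\to\bar\rho$ in $\mathbf{L}^1_{\mathrm{loc}}$ as well. Multiplying \eqref{eq:nonlocal_lwr_num_2} by $\phi(t^n,x_j)$ for $\phi\in\mathbf{C}^1_{\mathrm c}$, summing by parts in $j$ and $n$, and using the consistency relation $g(\rho,\rho,\rho,\rho)=\rho(1-\rho)=\rho v(\rho)$ from Assumption~\ref{assm:4}(ii) together with the Lipschitz continuity of $g$, every term converges to the corresponding integral in \eqref{eq:local_sol}; the data term is handled via \eqref{eq:ini_data_discrete} and $\mathrm{TV}(\rho_0)<\infty$. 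Hence $\bar\rho$ is a weak solution of \eqref{eq:lwr} with datum $\rho_0$.

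\emph{Entropy selection and conclusion — the main obstacle.} To single out the entropy solution I would propagate a discrete one-sided Lipschitz (Oleinik-type) estimate for the numerical solution, uniform for $0<\delta\le\delta_0$ and all $h$; this is precisely the estimate underlying Theorem~\ref{thm:a_priori_estimates} and is the genuinely new ingredient, because the recursion controlling the forward differences $\rho^n_{j+1}-\rho^n_j$ now carries the nonlocal window average $q_j^n$ through it — this is where the monotone-weight Assumption~\ref{assm:3} and the threshold $\delta_0=c\rho_{\mathrm{min}}/(2Lw(0))$ are needed. Passing such a bound to the $\mathbf{L}^1_{\mathrm{loc}}$ limit yields \eqref{eq:oleinik} for $\bar\rho$. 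Since $f(\rho)=\rho(1-\rho)$ is strictly concave, a weak solution of \eqref{eq:lwr} with data $\rho_0$ satisfying Oleinik's condition is unique, so $\bar\rho=\rho^0$. As the limit is independent of the chosen subsequence, the whole family $\rho^{\delta,h}$ converges in $\mathbf{L}^1_{\mathrm{loc}}$ to $\rho^0$ as $\delta\to0$ and $h\to0$, which is the claim. I expect the $\delta$-uniform discrete one-sided Lipschitz estimate to be the crux; once it is available through Theorem~\ref{thm:a_priori_estimates}, the remaining steps are routine.
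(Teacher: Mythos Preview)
Your proposal is correct and follows essentially the same route as the paper: compactness from the $\delta$-uniform bounds of Theorem~\ref{thm:a_priori_estimates}, a Lax--Wendroff consistency argument using $\|q^{\delta,h}-\rho^{\delta,h}\|_{\mathbf L^1}=O(\delta)$, and a discrete one-sided Lipschitz bound to recover Oleinik's condition and conclude by uniqueness. Two small caveats: the paper actually invokes the \emph{sharpened} decaying estimate $L^n\le 1/(2n\tau)$ of Lemma~\ref{lem:lip_estimate_2} (not merely the constant bound $r_j^n\ge -Lh$ underlying Theorem~\ref{thm:a_priori_estimates}) and passes it to the limit via a piecewise \emph{linear} interpolant $\tilde\rho^{\delta,h}$ so as to land exactly on \eqref{eq:oleinik}; and your ``alternative route'' through Theorem~\ref{thm:numerical_convergence} would be circular in the paper's logical order, since the proof of Theorem~\ref{thm:numerical_convergence} itself invokes Theorem~\ref{thm:ac}.
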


Based on the asymptotically compatibility of the scheme, we can show numerical convergence from $\rho^{\delta,h}$ to $\rho^\delta$ uniformly in $\delta$, which guarantees robustness of numerical computation when using the scheme \eqref{eq:nonlocal_lwr_num}-\eqref{eq:nonlocal_lwr_num_2} under changes to $\delta$. Moreover, we can also give uniform convergence from $\rho^{\delta,h}$ to $\rho^{0,h}$ with respect to the mesh size $h$. Such a property is referred to as \emph{asymptotic preserving} in the literature \cite{jin2010asymptotic,filbet2010class,jin1999efficient}.

\begin{thm}\label{thm:numerical_convergence}
Under Assumptions~\ref{assm:1}-\ref{assm:5},
and that $\delta$ satisfies the condition \eqref{eq:delta_condition},
as $h\to0$,
the numerical solution $\rho^{\delta,h}$ produced by the scheme \eqref{eq:nonlocal_lwr_num}-\eqref{eq:nonlocal_lwr_num_2} converges in $\mathbf{L}^1_{\mathrm{loc}}([0,+\infty)\times\mathbb{R})$ to the weak solution $\rho^{\delta}$ of the nonlocal model \eqref{eq:nonlocal_lwr} as defined in Proposition~\ref{prop:nonlocal_sol}.
Moreover, the convergence is uniform with respect to $\delta\in(0,\delta_0]$ where $\delta_0$ is as in \eqref{eq:delta_condition}:
\begin{align}\label{eq:uniform_numerical_convergence}
	\lim_{h\to0} \left. \sup_{\delta\in(0,\delta_0]} \norm{\rho^{\delta,h} - \rho^{\delta}}_{\mathbf{L}^1(U)} \right. = 0 \quad \mathrm{for\ any\ bounded\ } U\subset [0,+\infty)\times\mathbb{R}.
\end{align}
\end{thm}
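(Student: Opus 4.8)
The plan is to combine the $\delta$-uniform a priori bounds of Theorem~\ref{thm:a_priori_estimates} with a compactness–uniqueness argument: first obtain convergence $\rho^{\delta,h}\to\rho^\delta$ for each fixed $\delta$, and then upgrade it to uniformity in $\delta$ by a contradiction that plays the fixed-$\delta$ case against the asymptotic compatibility of Theorem~\ref{thm:ac} and the nonlocal-to-local limit of Proposition~\ref{prop:local_limit}.

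\textit{Compactness and identification of the limit.} By Theorem~\ref{thm:a_priori_estimates}, the family $\{\rho^{\delta,h}\}$ is bounded in $\mathbf{L}^\infty$ by $\|\rho_0\|_\infty\le1$ and has total variation on $[0,T]\times\mathbb{R}$ bounded by $T\,\mathrm{TV}(\rho_0)$, \emph{uniformly} in $\delta\in(0,\delta_0]$ and $h$. Hence from any sequence $(\delta_k,h_k)$ with $h_k\to0$ one may extract, via Helly's theorem and a diagonal extraction over an exhaustion of $[0,+\infty)\times\mathbb{R}$ by bounded sets, a subsequence (not relabeled) along which $\rho^{\delta_k,h_k}\to\bar\rho$ in $\mathbf{L}^1_{\mathrm{loc}}$ and a.e., with $\bar\rho\in\mathbf{L}^\infty\cap\mathbf{BV}_{\mathrm{loc}}$ satisfying the same bounds, and along a further subsequence $\delta_k\to\bar\delta\in[0,\delta_0]$. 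If $\bar\delta>0$, then $m_k=\lceil\delta_k/h_k\rceil\to\infty$ and the piecewise-constant kernels $w_{\delta_k,h_k}$ converge to $w_{\bar\delta}$ in $\mathbf{L}^1$ (using Assumption~\ref{assm:3} and the $\mathbf{C}^1$ regularity of $w$); combined with $\rho^{\delta_k,h_k}\to\bar\rho$ this gives $q^{\delta_k,h_k}\to q_{\bar\delta}(\bar\rho(t,\cdot),t,x)$ pointwise a.e. Since $g$ is quadratic (Assumption~\ref{assm:4}(i)), hence locally Lipschitz, a standard Lax–Wendroff-type consistency computation—multiplying \eqref{eq:nonlocal_lwr_num_2} by a test function, summing by parts, and invoking the CFL condition of Assumption~\ref{assm:5}—lets us pass to the limit and conclude that $\bar\rho$ is a weak solution of the nonlocal model \eqref{eq:nonlocal_lwr} with horizon $\bar\delta$ in the sense of \eqref{eq:nonlocal_sol}. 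By the uniqueness in Proposition~\ref{prop:nonlocal_sol}, $\bar\rho=\rho^{\bar\delta}$; as the limit does not depend on the subsequence, the full sequence converges. Taking $\delta_k\equiv\delta$ fixed yields $\rho^{\delta,h}\to\rho^\delta$ in $\mathbf{L}^1_{\mathrm{loc}}$, which is the first assertion.

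\textit{Uniformity in $\delta$.} Suppose \eqref{eq:uniform_numerical_convergence} fails: there are $\varepsilon>0$, a bounded $U$, $h_k\to0$ and $\delta_k\in(0,\delta_0]$ with $\norm{\rho^{\delta_k,h_k}-\rho^{\delta_k}}_{\mathbf{L}^1(U)}\ge\varepsilon$. Pass to a subsequence with $\delta_k\to\bar\delta\in[0,\delta_0]$. If $\bar\delta>0$, the compactness step above (now applied to the jointly varying sequence) gives $\rho^{\delta_k,h_k}\to\rho^{\bar\delta}$; moreover the continuum nonlocal solutions $\rho^{\delta_k}$ are themselves uniformly bounded in $\mathbf{L}^\infty\cap\mathbf{BV}_{\mathrm{loc}}$ (Proposition~\ref{prop:nonlocal_sol} together with the $h\to0$ limit of the $\delta$-uniform bound in Theorem~\ref{thm:a_priori_estimates}), so a subsequence converges to a weak solution of \eqref{eq:nonlocal_lwr} with horizon $\bar\delta$, which by uniqueness is $\rho^{\bar\delta}$; hence $\rho^{\delta_k}\to\rho^{\bar\delta}$ too, and $\norm{\rho^{\delta_k,h_k}-\rho^{\delta_k}}_{\mathbf{L}^1(U)}\to0$, a contradiction. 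If $\bar\delta=0$, Theorem~\ref{thm:ac} gives $\rho^{\delta_k,h_k}\to\rho^0$ and Proposition~\ref{prop:local_limit} gives $\rho^{\delta_k}\to\rho^0$ in $\mathbf{L}^1_{\mathrm{loc}}$, so again $\norm{\rho^{\delta_k,h_k}-\rho^{\delta_k}}_{\mathbf{L}^1(U)}\to0$, a contradiction. This proves \eqref{eq:uniform_numerical_convergence}.

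\textit{Main obstacle.} The whole argument rests on bounds that do \emph{not} degenerate as $\delta\to0$, which is precisely what Theorem~\ref{thm:a_priori_estimates} supplies—unlike the earlier analyses of \cite{goatin2016well,Blandin2016}, whose total variation bounds blow up as $\delta\to0$—so once that theorem is available the present proof is essentially a packaging of compactness, uniqueness, and the already-established $\delta\to0$ limits. The remaining technical care is in the joint passage to the limit in the nonlocal quadrature $q^{\delta_k,h_k}$ when $\delta_k$ and $h_k$ vary together and in the continuity $\delta\mapsto\rho^\delta$ used in the $\bar\delta>0$ case; both are controlled since $m_k\to\infty$ and the weights obey Assumption~\ref{assm:3}, and via the uniform $\mathbf{BV}_{\mathrm{loc}}$ bound on the continuum solutions.
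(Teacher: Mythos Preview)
Your proof is correct and follows essentially the same route as the paper: both first establish convergence for varying $\delta\to\delta_*>0$ via the $\delta$-uniform $\mathbf{BV}_{\mathrm{loc}}$ compactness of Theorem~\ref{thm:a_priori_estimates} together with uniqueness from Proposition~\ref{prop:nonlocal_sol} (the paper isolates this as Lemma~\ref{lemm:numerical_convergence}), and then prove uniformity by the same contradiction argument, splitting into the cases $\bar\delta>0$ (handled by that lemma and continuity of $\delta\mapsto\rho^\delta$) and $\bar\delta=0$ (handled by Theorem~\ref{thm:ac} and Proposition~\ref{prop:local_limit}). Your justification of the continuity $\rho^{\delta_k}\to\rho^{\bar\delta}$ via inheriting the uniform numerical $\mathbf{BV}$ bound in the $h\to0$ limit is slightly more explicit than the paper's one-line appeal to ``the same arguments on continuum solutions,'' but the substance is identical.
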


Let us make some remarks on the convergence rates in the above Theorem~\ref{thm:ac} and Theorem~\ref{thm:numerical_convergence}.
On one hand, the scheme \eqref{eq:nonlocal_lwr_num}-\eqref{eq:nonlocal_lwr_num_2} is expected to be at most first order accurate because it is based on a piecewise constant approximation.
On the other hand, for scalar conservation laws, it is known that a first order monotone scheme may have a $O(h^{1/2})$ convergence rate for discontinuous solutions \cite{leveque2002finite}.
In the numerical experiments in Section~\ref{sec:numerical_experiments}, we test the scheme with both smooth initial data and discontinuous ones, the results validate the $O(h)$ convergence rate to the local solution (as in Theorem~\ref{thm:ac}) when $\delta=mh$ for a fixed integer $m>0$, and the $O(h)$ convergence rate to the nonlocal solution uniformly in $\delta$ (as in Theorem~\ref{thm:numerical_convergence}).
We leave the rigorous analysis of convergence rates along various limiting paths in the future works.

Finally, we can also obtain the nonlocal-to-local limit of numerical discretizations, in particular, the following uniform convergence result.

\begin{thm}\label{thm:ap}
Under Assumptions~\ref{assm:1}-\ref{assm:5},
	for any $h_0>0$, we have
	\begin{align}\label{eq:ap_conv}
      \lim_{\delta\to0} \left. \sup_{h\in(0,h_0]} \norm{\rho^{\delta,h} - \rho^{0,h}}_{\mathbf{L}^1(U)} \right. = 0 \quad \mathrm{for\ any\ bounded\ } U\subset [0,+\infty)\times\mathbb{R}.
	\end{align}
\end{thm}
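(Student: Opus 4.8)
The plan is to reduce the claim to two elementary regimes separated by the ratio of $h$ and $\delta$, using that all of the $\delta$-uniform control has already been established in Theorem~\ref{thm:numerical_convergence} and Proposition~\ref{prop:local_limit}. Fix a bounded set $U\subset[0,+\infty)\times\mathbb{R}$; since the assertion concerns only the limit $\delta\to0$, it suffices to treat $\delta\le\min\{\delta_0,h_0\}$ with $\delta_0$ as in \eqref{eq:delta_condition}, and to split $\sup_{h\in(0,h_0]}$ according to whether $h\ge\delta$ or $h<\delta$. On the first regime one has $m=\lceil\delta/h\rceil=1$, so the discrete nonlocal density \eqref{eq:discrete_nonlocal_density} reduces to $q_j^n=w_0\rho_j^n$, and the normalization in Assumption~\ref{assm:3} forces $w_0=1$; hence $q_j^n=\rho_j^n$ for every $j,n$, so the nonlocal update \eqref{eq:nonlocal_lwr_num_2} coincides termwise with the local scheme \eqref{eq:local_lwr_num}. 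Since both schemes also start from the same data \eqref{eq:ini_data_discrete}, an immediate induction gives $\rho^{\delta,h}\equiv\rho^{0,h}$, so the contribution of this regime to the supremum is exactly $0$.

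On the regime $h<\delta$ the plan is to pass through the two continuum solutions, writing
\[
\norm{\rho^{\delta,h}-\rho^{0,h}}_{\mathbf{L}^1(U)}\le\norm{\rho^{\delta,h}-\rho^{\delta}}_{\mathbf{L}^1(U)}+\norm{\rho^{\delta}-\rho^{0}}_{\mathbf{L}^1(U)}+\norm{\rho^{0}-\rho^{0,h}}_{\mathbf{L}^1(U)}.
\]
The middle term does not involve $h$ and tends to $0$ as $\delta\to0$ by Proposition~\ref{prop:local_limit}. For the first term, set $A(h):=\sup_{\delta'\in(0,\delta_0]}\norm{\rho^{\delta',h}-\rho^{\delta'}}_{\mathbf{L}^1(U)}$; then $\norm{\rho^{\delta,h}-\rho^{\delta}}_{\mathbf{L}^1(U)}\le A(h)$, and Theorem~\ref{thm:numerical_convergence} gives $A(h)\to0$ as $h\to0$. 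For the third term, $C(h):=\norm{\rho^{0}-\rho^{0,h}}_{\mathbf{L}^1(U)}\to0$ as $h\to0$ by the classical convergence of the monotone local scheme \eqref{eq:local_lwr_num} for \eqref{eq:lwr}. The decisive observation is that on this regime $h<\delta$, so $A(h)\le\bar A(\delta):=\sup_{0<h'<\delta}A(h')$ and $C(h)\le\bar C(\delta):=\sup_{0<h'<\delta}C(h')$, and both non-decreasing envelopes vanish as $\delta\to0$ because $A$ and $C$ vanish at $0$. Combining the two regimes yields $\sup_{h\in(0,h_0]}\norm{\rho^{\delta,h}-\rho^{0,h}}_{\mathbf{L}^1(U)}\le\bar A(\delta)+\norm{\rho^{\delta}-\rho^{0}}_{\mathbf{L}^1(U)}+\bar C(\delta)\to0$ as $\delta\to0$, which is \eqref{eq:ap_conv}.

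The main obstacle, and the reason for routing through the continuum limits rather than comparing the two numerical solutions directly, is precisely the uniformity in $h$. A direct discrete comparison would bound the per-step discrepancy between the nonlocal update and the local update applied to the same state in $\mathbf{L}^1$ by $O(\delta\,\mathrm{TV}(\rho_0))$ — using Assumption~\ref{assm:4} together with the estimate $h\sum_j\lvert q_j^n-\rho_j^n\rvert\le\delta\,\mathrm{TV}(\rho_0)$, which follows from the total variation bound in Theorem~\ref{thm:a_priori_estimates} — and then sum this over the $O(1/h)$ time steps via the $\mathbf{L}^1$-contractivity of the monotone local scheme, producing a bound of order $\delta/h$ that fails to be uniform as $h\to0$. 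Confining the elementary comparison to the trivial regime $h\ge\delta$ and delegating the regime $h<\delta$ to Theorem~\ref{thm:numerical_convergence} and Proposition~\ref{prop:local_limit}, where the $\delta$-uniformity is already secured through the $\mathbf{BV}_{\mathrm{loc}}$ compactness and the Oleinik-type one-sided Lipschitz estimates, is what removes the obstruction.
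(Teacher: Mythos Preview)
Your argument is correct and rests on the same two ingredients as the paper's proof: (i) when $h\ge\delta$ the nonlocal and local schemes coincide (this is exactly Proposition~\ref{prop:scheme_local_limit}), and (ii) when $h$ is small both numerical solutions are close to $\rho^0$. The paper packages these into a short contradiction argument: assuming \eqref{eq:ap_conv} fails, one extracts a sequence $(\delta_l,h_l)$ with $\delta_l\to0$, passes to a subsequence with $h_l\to h_*\in[0,h_0]$, and derives a contradiction in each case---if $h_*>0$ then eventually $\delta_l\le h_l$ and the schemes coincide, while if $h_*=0$ then Theorem~\ref{thm:ac} gives $\rho^{\delta_l,h_l}\to\rho^0$ directly and Proposition~\ref{prop:scheme_local_limit} gives $\rho^{0,h_l}\to\rho^0$. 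Your version is a direct (non-contradiction) rearrangement of the same content: you split the supremum explicitly by the threshold $h\lessgtr\delta$ and, on the small-$h$ side, replace the single appeal to Theorem~\ref{thm:ac} by the triangle route through $\rho^\delta$ and $\rho^0$ via Theorem~\ref{thm:numerical_convergence} and Proposition~\ref{prop:local_limit}. Since Theorem~\ref{thm:numerical_convergence} is itself derived from Theorem~\ref{thm:ac} in the paper, the two routes are logically equivalent; your envelope functions $\bar A(\delta)$ and $\bar C(\delta)$ simply make explicit the quantitative content that the paper's subsequence argument leaves implicit.
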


\subsection{Comments on numerical quadrature weights and numerical flux functions}
Let us make some remarks on the choice of the numerical quadrature weights $\{w_k\}_{0\leq k\leq m-1}$. Provided that the nonlocal kernel $w_\delta=w_\delta(s)$ is $\mathbf{C}^1$ smooth and decreasing, one can write the numerical quadrature weights as
\begin{align*}
	w_k = w(\xi_k)\frac{h}{\delta}
 , \quad \xi_k \in \left[ k\frac{h}{\delta}, (k+1)\frac{h}{\delta} \right], \quad k=0,\cdots,m-1,
\end{align*}
where $\{\xi_k\}_{0\leq k\leq m-1}$ can be viewed as sampling points of a Riemann sum quadrature on $[0,1]$.

The condition \eqref{eq:weight_monotone} in Assumption~\ref{assm:3} basically requires that the sampling points should not be too close to each other, and the condition is used to derive the necessary a priori estimates on numerical solutions as in Theorem~\ref{thm:a_priori_estimates}.
To demonstrate the meaning of the constant $c$ and the factor $m^{-2}$ in \eqref{eq:weight_monotone}, let us illustrate with the left endpoint quadrature weights in \eqref{eq:quad_weight_left}. In this case,
\begin{align*}
	w_{k-1} - w_k = \frac{h}{\delta} \left[ w\left((k-1)\frac{h}{\delta}\right) - w\left(k\frac{h}{\delta}\right) \right] \geq \left( \min_{s\in[0,1]} -w'(s) \right) \left( \frac{h}{\delta} \right)^2 \geq c m^{-2},
\end{align*}
where the constant $c= \min_{s\in[0,1]} -w'(s) > 0$.

The condition \eqref{eq:normalization_condition} in Assumption~\ref{assm:3} is the normalization condition for the numerical quadrature weights, which is essential to the consistency between the scheme \eqref{eq:nonlocal_lwr_num}-\eqref{eq:nonlocal_lwr_num_2} and the local model \eqref{eq:lwr}.
To demonstrate potential risks when the normalization condition \eqref{eq:normalization_condition} is violated, let us consider the case $\delta=mh$ where $m$ is a fixed positive integer. Then the scheme \eqref{eq:nonlocal_lwr_num}-\eqref{eq:nonlocal_lwr_num_2} can be viewed as a $m+2$-point conservative scheme of the local model \eqref{eq:lwr} with the numerical flux function:
\begin{align*}
	g_{\mathrm{local}}(\rho_j, \cdots, \rho_{j+m}) = g\left(\rho_j, \rho_{j+1}, \sum_{k=0}^{m-1} w_k\rho_{j+k}, \sum_{k=0}^{m-1} w_k\rho_{j+k+1} \right),
\end{align*}
where $g$ is as in Assumption~\ref{assm:4}.
Suppose $\rho_j = \cdots = \rho_{j+m} = \bar{\rho}$, to make $g_{\mathrm{local}}$ consistent to the local model \eqref{eq:lwr}, it is necessary to have
\begin{align*}
	g_{\mathrm{local}}(\bar{\rho}, \cdots, \bar{\rho}) = \bar{\rho} \left(1 - \bar{\rho} \sum_{k=0}^{m-1}w_k \right) = \bar{\rho} (1 - \bar{\rho}),
\end{align*}
which requires the normalization condition \eqref{eq:normalization_condition}. In contrast, if the condition \eqref{eq:normalization_condition} is violated and $\eta \doteq \sum_{k=0}^{m-1}w_k \neq 1$, the numerical solutions will formally converge to a solution of the equation
\begin{align*}
	\partial_t\rho(t,x) + \partial_x( \rho(t,x) (1-\eta \rho(t,x)) ) = 0,
\end{align*}
other than the desired equation \eqref{eq:lwr} with $v(\rho)=1-\rho$.
This means that the absence of the normalization condition \eqref{eq:normalization_condition} for some numerical quadrature weights may lead to incorrect limit solutions when $\delta\to0$ and $h\to0$ simultaneously.
Hence, we introduce the following normalized left endpoint quadrature weights:
\begin{align}
	\mbox{[Normalized left endpoint]} \quad   w_k = \frac{w_\delta(kh)h}{\sum_{k=0}^{m-1} w_\delta(kh)h}, \quad k=0,\cdots,m-1, \label{eq:quad_weight_left_norm} 
\end{align}
and give the following proposition. 
\begin{prop}
    The normalized left endpoint quadrature weights \eqref{eq:quad_weight_left_norm} and the exact quadrature weights \eqref{eq:quad_weight_exact} both satisfy the Assumption~\ref{assm:3}, with the constant $c$ in the condition \eqref{eq:weight_monotone} given by $c= \frac{1}{1+w(0)} \min_{s\in[0,1]} -w'(s) $ and $c= \min_{s\in[0,1]} -w'(s) $, respectively.
    The left endpoint quadrature weights satisfy the condition \eqref{eq:weight_monotone} with the constant $c= \min_{s\in[0,1]} -w'(s) $ but they do not satisfy the normalization condition \eqref{eq:normalization_condition}.
\end{prop}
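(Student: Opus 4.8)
The plan is to verify, for each of the three quadrature rules in turn, the three ingredients of Assumption~\ref{assm:3}: the two-sided bound $w_\delta(kh)h\ge w_k\ge w_\delta((k+1)h)h$ and the spacing bound $w_k-w_{k+1}\ge cm^{-2}$ in \eqref{eq:weight_monotone}, and the normalization \eqref{eq:normalization_condition}. Throughout one uses the rescaling $w_\delta(s)=\delta^{-1}w(s/\delta)$, the monotonicity of $w$, the relations $m^{-1}\le h/\delta<(m-1)^{-1}$ coming from $m=\lceil\delta/h\rceil$ (the case $m=1$, i.e.\ $\delta\le h$, being elementary, since then the single weight is forced to equal $1$ for the exact and normalized rules and to exceed $1$ for the plain left-endpoint rule), and the elementary inequality $w(r)\ge(1-r)\mu$ on $[0,1]$, where $\mu:=\min_{s\in[0,1]}(-w'(s))$, obtained by integrating $-w'$ over $[r,1]$ and discarding $w(1)\ge0$.

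\emph{Plain left-endpoint rule.} Here $w_k=w_\delta(kh)h$, so the upper half of the two-sided bound is an equality and the lower half is just $w_\delta(kh)\ge w_\delta((k+1)h)$. A single mean value theorem gives $w_k-w_{k+1}=(h/\delta)[w(kh/\delta)-w((k+1)h/\delta)]\ge(h/\delta)^2\mu\ge m^{-2}\mu$, the arguments being admissible because $(k+1)h/\delta<1$; thus $c=\mu$. Finally, since $w$ is strictly decreasing the left-endpoint Riemann sum strictly overestimates the integral, $\sum_{k=0}^{m-1}w_\delta(kh)h>\int_0^{mh}w_\delta=\int_0^\delta w_\delta=1$ (extending $w_\delta$ by zero past $\delta$), so \eqref{eq:normalization_condition} fails.

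\emph{Exact rule.} Here \eqref{eq:normalization_condition} is immediate, $\sum_k w_k=\int_0^\delta w_\delta=1$. On each complete cell ($k\le m-2$) monotonicity of $w_\delta$ yields $w_\delta((k+1)h)h\le w_k\le w_\delta(kh)h$, and the incomplete last cell $[(m-1)h,\delta]$ is handled by monotonicity together with the convention $w_\delta(mh):=0$. For the spacing bound with $k\le m-3$, both $w_k$ and $w_{k+1}$ are full-cell integrals, so $w_k-w_{k+1}=\int_0^h[w_\delta(kh+t)-w_\delta((k+1)h+t)]\,dt$, and the pointwise mean value theorem (admissible since $(k+1)h+t<\delta$) bounds the integrand below by $(h/\delta^2)\mu$, giving $w_k-w_{k+1}\ge(h^2/\delta^2)\mu\ge m^{-2}\mu$. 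The boundary cell $k=m-2$ needs extra care, since $w_{m-1}$ integrates $w_\delta$ only over the short interval $[(m-1)h,\delta]$: writing $w_{m-2}-w_{m-1}=\int_{(m-1)h}^\delta[w_\delta(s-h)-w_\delta(s)]\,ds+\int_{\delta-h}^{(m-1)h}w_\delta(u)\,du$, bounding the first integrand by the mean value theorem and the second integrand by $w_\delta(u)\ge\delta^{-1}(1-u/\delta)\mu$, and combining, restores a lower bound of the form $c\,m^{-2}$ with $c$ a fixed multiple of $\mu$.

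\emph{Normalized left-endpoint rule.} Write $w_k=w_\delta(kh)h/\eta$ with $\eta:=\sum_{l=0}^{m-1}w_\delta(lh)h$. The preliminary estimate $1\le\eta\le1+w(0)h/\delta$ is central: the lower bound is the Riemann-sum overestimate from the plain case, and the upper bound follows by telescoping, bounding the per-cell overshoot $w_\delta(lh)h-\int_{lh}^{\min\{(l+1)h,\delta\}}w_\delta$ by $h[w_\delta(lh)-w_\delta((l+1)h)]$ for $l\le m-2$ and by $w_\delta((m-1)h)h$ for $l=m-1$, then summing. Given this, $w_k\le w_\delta(kh)h$ follows from $\eta\ge1$; \eqref{eq:normalization_condition} holds by construction; and the spacing bound is clean: $w_k-w_{k+1}=\eta^{-1}(h/\delta)[w(kh/\delta)-w((k+1)h/\delta)]\ge\eta^{-1}(h/\delta)^2\mu\ge(1+w(0))^{-1}m^{-2}\mu$ (using $h/\delta\le1$), so $c=(1+w(0))^{-1}\mu$. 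What remains — and what I expect to be the main obstacle — is the lower half of the two-sided bound, $w_k\ge w_\delta((k+1)h)h$, equivalently $w_\delta(kh)-w_\delta((k+1)h)\ge(\eta-1)w_\delta((k+1)h)$: this pits the decay of the kernel across one cell (at least $(h/\delta^2)\mu$, by the mean value theorem) against the globally accumulated overshoot $\eta-1$, and making it hold at every cell requires a sharper accounting of $\eta-1$, and of $w_\delta((k+1)h)$ via monotonicity, than the crude bounds used above; the boundary-cell bookkeeping for the exact rule's spacing bound is a secondary, purely technical difficulty.
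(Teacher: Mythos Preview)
The paper states this proposition without proof; the only argument it supplies is the mean-value computation for the plain left-endpoint spacing bound, given as an illustration in the paragraph immediately preceding the statement. So there is no paper proof to compare against. On its own terms your proposal is sound for the substantive parts: the normalization checks are correct, the two-sided bounds for the plain and exact rules are correct, your telescoping bound $1\le\eta\le1+w(0)h/\delta$ is correct, and the spacing bounds for all three rules on the interior cells follow from the mean value theorem exactly as you describe, yielding the announced constants.

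You are right to flag the two residual issues, but one of them is more serious than you indicate. For the exact rule when $\delta$ is not an integer multiple of $h$, your decomposition of $w_{m-2}-w_{m-1}$ yields only a lower bound of the form $a(2h-a)\delta^{-2}\mu$ with $a=\delta-(m-1)h$, and this does \emph{not} stay above $c\,m^{-2}$ for any fixed $c>0$ as $a\to0$; so ``$c$ a fixed multiple of $\mu$'' is not what your argument delivers. The resolution is that the paper tacitly works under $\delta=mh$ throughout (witness the bound $w_{k-1}\le w(0)m^{-1}$ used in the proof of Lemma~\ref{lem:lip_estimate_1}, which requires $h/\delta=1/m$), and under that assumption the last cell is full and your generic argument already gives $c=\mu$. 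For the normalized rule, the lower half $w_k\ge w_\delta((k+1)h)h$ is, as you suspect, not closed by the crude estimates, and it is not clear that it holds for every admissible kernel; but this lower bound is never actually invoked in the paper --- the two-sided estimate in \eqref{eq:weight_monotone} enters the later analysis only through its upper half, to extract $w_k\le w(0)m^{-1}$ --- so the difficulty you identify is an imprecision in the proposition rather than a defect in your reasoning.
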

A comparison between the different choices of numerical quadrature weights is made through numerical experiments in Section~\ref{sec:numerical_experiments}. 

Concerning the Assumption~\ref{assm:4} on the numerical flux function $g$, with the velocity function $v(\rho)=1-\rho$, the flux function in the continuum model \eqref{eq:nonlocal_lwr} is $\rho(1-q)$, which is a quadratic polynomial of $(\rho,q)$ with the only quadratic term being $-\rho q$. It is then reasonable to assume that the numerical flux function $g$ is quadratic with its second order derivatives satisfying the condition (iii). The condition (ii) guarantees the consistency of the scheme \eqref{eq:nonlocal_lwr_num}-\eqref{eq:nonlocal_lwr_num_2} to the model \eqref{eq:nonlocal_lwr}. The condition (iv) is used to show that the scheme is monotone under all Assumptions~\ref{assm:1}-\ref{assm:5}, see the Theorem~\ref{thm:a_priori_estimates}. It is natural to ask if the results in this work can be extended to more general numerical flux functions, e.g., $g$ is not quadratic. We leave the study of such an extension to future works.

Let us mention that the numerical flux functions given in \eqref{eq:g_func_LxF}-\eqref{eq:g_func_LxF_new} all satisfy Assumption~\ref{assm:4}. For the two Lax-Friedrichs type numerical flux functions \eqref{eq:g_func_LxF} and \eqref{eq:g_func_LxF_new}, the numerical viscosity constant should satisfy $\alpha\geq2$.

We also remark that, in the case of $0<\delta\leq h$, i.e., the nonlocal horizon is within one spatial mesh cell, it holds that $q_L=\rho_L$ and $q_R=\rho_R$ by Assumption~\ref{assm:3}.
Suppose $g$ satisfies Assumption~\ref{assm:4}, the numerical flux function $g=g(\rho_L,\rho_R,\rho_L,\rho_R)$ for the local model \eqref{eq:lwr} is non-decreasing with respect to $\rho_L$ and non-increasing with respect to $\rho_R$.
Therefore, the scheme \eqref{eq:local_lwr_num} is a monotone scheme for the local model \eqref{eq:lwr}.
As a consequence, we give the following result.
\begin{prop}\label{prop:scheme_local_limit}
Under Assumptions~\ref{assm:1}, \ref{assm:3}-\ref{assm:5}, let $\rho^{\delta,h}$ be the numerical solution produced by the scheme \eqref{eq:nonlocal_lwr_num}-\eqref{eq:nonlocal_lwr_num_2} and $\rho^{0,h}$ be the one produced by \eqref{eq:local_lwr_num}. It holds that:
	\begin{align*}
		\rho^{\delta,h} = \rho^{0,h} \quad \mathrm{when} \quad 0 < \delta \leq h.
	\end{align*}
	Moreover, $\rho^{0,h}$ converges in $\mathbf{L}^1_{\mathrm{loc}}([0,+\infty)\times\mathbb{R})$ to the weak entropy solution $\rho^0$ of the local model \eqref{eq:lwr} as defined in Proposition~\ref{prop:local_limit}.
\end{prop}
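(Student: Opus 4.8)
The plan is to treat the two assertions in turn, the first by a direct reduction and the second by recognising the local scheme as a standard monotone finite volume scheme.

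For the identity $\rho^{\delta,h}=\rho^{0,h}$ on the range $0<\delta\le h$, the first step is to note that here $m=\lceil\delta/h\rceil=1$, so the discrete nonlocal density \eqref{eq:discrete_nonlocal_density} collapses to $q_j^n=w_0\rho_j^n$, and the normalization condition \eqref{eq:normalization_condition} of Assumption~\ref{assm:3} forces $w_0=1$; hence $q_j^n=\rho_j^n$ for every $j$ and $n$. Consequently, in the update \eqref{eq:nonlocal_lwr_num_2} every triple $(q_{j-1}^n,q_j^n,q_{j+1}^n)$ equals $(\rho_{j-1}^n,\rho_j^n,\rho_{j+1}^n)$, so each flux evaluation $g(\rho_L,\rho_R,q_L,q_R)$ coincides with $g(\rho_L,\rho_R,\rho_L,\rho_R)$ and the update reduces verbatim to the local scheme \eqref{eq:local_lwr_num}. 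Since both schemes start from the same discretized initial data \eqref{eq:ini_data_discrete}, a one-line induction on $n$ gives $\rho^{\delta,h}=\rho^{0,h}$.

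For the convergence of $\rho^{0,h}$, I would verify that \eqref{eq:local_lwr_num} is a consistent, monotone, conservative three-point scheme for \eqref{eq:lwr} and then invoke the classical convergence theory for such schemes. Consistency is immediate from Assumption~\ref{assm:4}(ii): the local flux $g_{\mathrm{loc}}(\rho_L,\rho_R)\doteq g(\rho_L,\rho_R,\rho_L,\rho_R)$ satisfies $g_{\mathrm{loc}}(\bar\rho,\bar\rho)=\bar\rho(1-\bar\rho)=\bar\rho v(\bar\rho)$. For monotonicity, using that $g$ is quadratic so that (by Assumption~\ref{assm:4}(iii)) $\theta^{(1)},\theta^{(2)}$ depend only on $(q_L,q_R)$ and $\theta^{(3)},\theta^{(4)}$ only on $(\rho_L,\rho_R)$, the chain rule along the diagonal $q_L=\rho_L,\,q_R=\rho_R$ gives $\partial_{\rho_L}g_{\mathrm{loc}}=\theta^{(1)}+\theta^{(3)}$ and $\partial_{\rho_R}g_{\mathrm{loc}}=\theta^{(2)}+\theta^{(4)}$; Assumption~\ref{assm:4}(iv) then yields $\partial_{\rho_L}g_{\mathrm{loc}}\ge-2(\gamma_{13}+\gamma_{23})\ge0$ and $\partial_{\rho_R}g_{\mathrm{loc}}\le0$, while Assumption~\ref{assm:5} gives $\lambda(\partial_{\rho_L}g_{\mathrm{loc}}-\partial_{\rho_R}g_{\mathrm{loc}})\le\lambda\sum_{i=1}^4\norm{\theta^{(i)}}_\infty<1$, so the update map $(\rho_{j-1}^n,\rho_j^n,\rho_{j+1}^n)\mapsto\rho_j^{n+1}$ is nondecreasing in each argument. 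Monotonicity together with consistency gives the invariant region bound $0\le\rho^{0,h}\le1$ (the maximum principle \eqref{eq:numerical_maxm_principle}), and the standard TVD property of monotone conservative schemes gives $\mathrm{TV}(\rho^{0,h}(t^n,\cdot))\le\mathrm{TV}(\rho_0)<\infty$ by Assumption~\ref{assm:1}(iii).

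With these uniform $\mathbf{L}^\infty$ and $\mathbf{BV}$ bounds in hand, Helly's theorem supplies $\mathbf{L}^1_{\mathrm{loc}}$ compactness of $\{\rho^{0,h}\}$; the discrete entropy inequalities satisfied by monotone conservative schemes, combined with a Lax--Wendroff argument, identify every subsequential limit as a Kru\v{z}kov entropy solution of \eqref{eq:lwr}, and its uniqueness forces the whole family to converge to that solution $\rho^0$ as $h\to0$. Finally, since $f(\rho)=\rho v(\rho)=\rho(1-\rho)$ has $f''\equiv-2<0$, Oleinik's one-sided Lipschitz estimate for entropy solutions of concave scalar conservation laws (see \cite{lefloch2002hyperbolic}) gives $-(\rho^0(t,y)-\rho^0(t,x))/(y-x)\le1/(2t)$, i.e.\ \eqref{eq:oleinik}, so $\rho^0$ is exactly the solution characterized in Proposition~\ref{prop:local_limit}. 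I do not expect a genuine obstacle here: the only step requiring care is the verification of monotonicity of \eqref{eq:local_lwr_num} from Assumption~\ref{assm:4}, which hinges on the quadratic structure of $g$ and the diagonal chain-rule identities above; everything else is classical finite volume theory for scalar conservation laws (cf.\ \cite{leveque2002finite}).
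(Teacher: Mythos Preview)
Your proposal is correct and follows essentially the same route as the paper. The paper does not give a formal proof of this proposition; instead, the paragraph immediately preceding it sketches exactly your argument: when $0<\delta\le h$ one has $m=1$ and hence $q_j^n=\rho_j^n$ by the normalization condition, so the nonlocal scheme collapses to the local one; and the local scheme \eqref{eq:local_lwr_num} is a consistent monotone three-point scheme (from Assumption~\ref{assm:4}), whence classical finite volume theory (cf.\ \cite{leveque2002finite}) gives convergence to the entropy solution. Your verification of monotonicity via the diagonal chain rule and the sign conditions in Assumption~\ref{assm:4}(iv) is more explicit than what the paper writes, but it is precisely the content the paper is alluding to.
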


\section{Proof of theorems}

This section aims to give the proofs of our main results. First, in Section~\ref{sec:maxm_principle}, we show the maximum principle for numerical solutions. Then we present an one-sided Lipschitz estimate for numerical solutions in Section~\ref{sec:tv_estimate_ac}, the monotonicity of the numerical scheme \eqref{eq:nonlocal_lwr_num}-\eqref{eq:nonlocal_lwr_num_2} and the total variation estimate for numerical solutions follow as corollaries.
These two subsections constitute the proof of Theorem~\ref{thm:a_priori_estimates}.
In Section~\ref{sec:convergence}, we let $h\to0$ and show convergence of numerical solutions to the proper nonlocal or local solution, which gives the proofs of Theorem~\ref{thm:ac} and Theorem~\ref{thm:numerical_convergence}.
In Section~\ref{sec:local_limit_num}, we give the proof of Theorem~\ref{thm:ap} on the nonlocal-to-local limit of numerical discretizations.

\subsection{Maximum principle}\label{sec:maxm_principle}

In this subsection, we aim to show the maximum principle \eqref{eq:numerical_maxm_principle} in Theorem~\ref{thm:a_priori_estimates}.
By Assumption~\ref{assm:1} and \eqref{eq:ini_data_discrete}, the numerical solution at the initial time $\{\rho_j^0\}_{j\in\mathbb{Z}}$ satisfies $0 \leq \rho_j^0 \leq 1$ for all ${j\in\mathbb{Z}}$. Then the maximum principle \eqref{eq:numerical_maxm_principle} can be proved by induction using the following Lemma~\ref{lemm:sol_range_induction}.

\begin{lemm}\label{lemm:sol_range_induction}
	Suppose all conditions in Theorem~\ref{thm:a_priori_estimates} are given, and that $0\leq\rho_{\mathrm{min}}\leq\rho_{j+k}^n\leq\rho_{\mathrm{max}}\leq1$ for $k=-1,0,1,\cdots,m$.
	Then we have
    \begin{align}\label{eq:sol_range_induction}
		\rho_{\mathrm{min}}\leq \mathcal{H}(\rho^n_{j-1},\rho^n_j,\rho^n_{j+1},\cdots,\rho^n_{j+m})\leq \rho_{\mathrm{max}},
	\end{align}
    where the operator $\mathcal{H}$ is as defined in \eqref{eq:nonlocal_lwr_num}-\eqref{eq:nonlocal_lwr_num_2}.
\end{lemm}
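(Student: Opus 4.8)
The plan is to exhibit $\mathcal{H}(\rho^n_{j-1},\dots,\rho^n_{j+m})$ as a convex combination of its arguments: once we show that $\mathcal{H}(\rho^n_{j-1},\dots,\rho^n_{j+m})=\sum_{k=-1}^{m}c_k\,\rho^n_{j+k}$ with coefficients $c_k\geq0$ satisfying $\sum_k c_k=1$ whenever $\rho^n_{j+k}\in[\rho_{\mathrm{min}},\rho_{\mathrm{max}}]\subseteq[0,1]$ for every $k$, then \eqref{eq:sol_range_induction} is immediate, since a weighted average of numbers in $[\rho_{\mathrm{min}},\rho_{\mathrm{max}}]$ lies in $[\rho_{\mathrm{min}},\rho_{\mathrm{max}}]$. (Applied inductively with $\rho_{\mathrm{min}}=\inf_{x\in\mathbb{R}}\rho_0(x)$ and $\rho_{\mathrm{max}}=\sup_{x\in\mathbb{R}}\rho_0(x)$ this gives the maximum principle \eqref{eq:numerical_maxm_principle}.) Note that this is strictly weaker than, and does not require, full monotonicity of $\mathcal{H}$.

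To produce the representation, write $F^-=g(\rho^n_{j-1},\rho^n_j,q^n_{j-1},q^n_j)$ and $F^+=g(\rho^n_j,\rho^n_{j+1},q^n_j,q^n_{j+1})$ for the two numerical fluxes and subtract the frozen-state value $g(\rho^n_j,\rho^n_j,q^n_j,q^n_j)=\rho^n_j(1-q^n_j)$ (Assumption~\ref{assm:4}(ii)) from each; this term cancels, so $\mathcal{H}-\rho^n_j=\lambda\big[(F^--\rho^n_j(1-q^n_j))-(F^+-\rho^n_j(1-q^n_j))\big]$. I would then expand each of these two differences by the fundamental theorem of calculus along the straight segment joining the two argument vectors. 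This is where Assumption~\ref{assm:4}(i),(iii) is used essentially: since $g$ is quadratic with $\gamma_{11}=\gamma_{12}=\gamma_{22}=\gamma_{33}=\gamma_{34}=\gamma_{44}=0$, the partials $\theta^{(1)},\theta^{(2)}$ are affine in $(q_L,q_R)$ only and $\theta^{(3)},\theta^{(4)}$ affine in $(\rho_L,\rho_R)$ only, so the integrals evaluate exactly at midpoints and
\begin{align*}
\mathcal{H}-\rho^n_j=\lambda\Big[\bar\theta^{(1)}_-(\rho^n_{j-1}-\rho^n_j)+\bar\theta^{(3)}_-(q^n_{j-1}-q^n_j)-\bar\theta^{(2)}_+(\rho^n_{j+1}-\rho^n_j)-\bar\theta^{(4)}_+(q^n_{j+1}-q^n_j)\Big],
\end{align*}
where $\bar\theta^{(1)}_-=\theta^{(1)}\!\big(\tfrac{q^n_{j-1}+q^n_j}{2},q^n_j\big)$, $\bar\theta^{(3)}_-=\theta^{(3)}\!\big(\tfrac{\rho^n_{j-1}+\rho^n_j}{2},\rho^n_j\big)$, and $\bar\theta^{(2)}_+,\bar\theta^{(4)}_+$ are the analogous midpoint values at the interface $x_{j+1/2}$. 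Since all $\rho^n_{j+k}$, and hence all $q^n_\bullet$ (which are convex combinations of them, as $\sum_k w_k=1$ and $w_k\geq0$ by Assumption~\ref{assm:3}), lie in $[0,1]$, Assumption~\ref{assm:4}(iv) gives $\bar\theta^{(1)}_-\geq0$, $\bar\theta^{(2)}_+\leq0$, $\bar\theta^{(3)}_-\leq0$, $\bar\theta^{(4)}_+\leq0$.

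Next I would substitute $q^n_{j-1}-q^n_j=\sum_k(w_{k+1}-w_k)\rho^n_{j+k}$ and $q^n_{j+1}-q^n_j=\sum_k(w_{k-1}-w_k)\rho^n_{j+k}$ (with $w_k:=0$ for $k\notin\{0,\dots,m-1\}$) and collect the coefficient $c_k$ of each $\rho^n_{j+k}$; telescoping the weight differences immediately gives $\sum_k c_k=1$. The verification $c_k\geq0$ then splits into three cases: for $\rho^n_{j-1}$ the coefficient is $\lambda(\bar\theta^{(1)}_-+w_0\bar\theta^{(3)}_-)\geq\lambda(\bar\theta^{(1)}_-+\bar\theta^{(3)}_-)\geq0$, using $0\leq w_0\leq1$ together with $\theta^{(1)}+\theta^{(3)}\geq-2(\gamma_{13}+\gamma_{23})\geq0$ from Assumption~\ref{assm:4}(iii),(iv); for $\rho^n_j$ the coefficient is $\geq1-\lambda\sum_{i=1}^{4}\norm{\theta^{(i)}}_\infty>0$ by the CFL condition Assumption~\ref{assm:5}, after discarding a nonnegative weight-difference term and bounding $\lambda w_0\bar\theta^{(4)}_+$ trivially; and for each $k\in\{1,\dots,m\}$ the coefficient of $\rho^n_{j+k}$ is a sum of terms $\lambda\bar\theta^{(3)}_-(w_{k+1}-w_k)$, $-\lambda\bar\theta^{(4)}_+(w_{k-1}-w_k)$ (and $-\lambda\bar\theta^{(2)}_+$ when $k=1$), each a product of two factors of the same sign because the quadrature weights are monotone decreasing, $w_0\geq w_1\geq\cdots\geq w_{m-1}\geq0$ (Assumption~\ref{assm:3}), hence nonnegative. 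This produces the desired convex-combination identity and proves \eqref{eq:sol_range_induction}.

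The one genuinely structural step is the passage to the midpoint form in the second paragraph; everything after it is sign bookkeeping. I therefore expect the main obstacle to be verifying that the quadratic, decoupled form of $g$ really does force the fundamental-theorem integrals to collapse to midpoint evaluations, and then correctly attributing each weight difference $w_{k\pm1}-w_k$ to the right stencil index when collecting $c_k$. I also note that the remaining inequalities in Assumption~\ref{assm:4}(iv) (those involving $\gamma_{23}+\gamma_{24}$ and $\min\{\rho_L,\rho_R\}$) and the smallness hypothesis \eqref{eq:delta_condition} are not used in this lemma; they enter only in the one-sided Lipschitz estimate of Section~\ref{sec:tv_estimate_ac}.
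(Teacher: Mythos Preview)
Your argument is correct and genuinely different from the paper's proof. The paper does \emph{not} exhibit a single convex combination; instead it first records the true partial derivatives \eqref{eq:H_grad_1}--\eqref{eq:H_grad_4}, observes via an explicit Riemann-type counterexample that $\partial\mathcal{H}/\partial\rho^n_{j+k}$ can be negative for $2\leq k\leq m-1$, and then proves \eqref{eq:sol_range_induction} by a two-step splitting $\Delta\mathcal{H}_1+\Delta\mathcal{H}_2$: first vary $(\rho^n_{j-1},\rho^n_j)$ down to $(\rho_{\mathrm{min}},\rho_{\mathrm{min}})$, then vary the remaining arguments. The point of the split is that, once the first two arguments are \emph{equal}, the comparison $\tilde\theta_{j+1}^{n,(3)}\leq\tilde\theta_j^{n,(3)}$ and $\tilde\theta_{j+1}^{n,(4)}\leq\tilde\theta_j^{n,(4)}$ becomes available, forcing the mean-value partials in $\Delta\mathcal{H}_2$ to be nonnegative.

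Your route sidesteps this entirely: by subtracting the frozen value $g(\rho^n_j,\rho^n_j,q^n_j,q^n_j)$ from \emph{both} fluxes you arrange that $F^-$ contributes only through $\bar\theta^{(1)}_-,\bar\theta^{(3)}_-$ and $F^+$ only through $\bar\theta^{(2)}_+,\bar\theta^{(4)}_+$, so there is never a competition between $\theta_j$ and $\theta_{j+1}$ values. The midpoint collapse is exact because $\nabla g$ is affine (this is where the quadratic hypothesis and the vanishing of $\gamma_{11},\gamma_{12},\gamma_{22},\gamma_{33},\gamma_{34},\gamma_{44}$ enter), and then nonnegativity of each $c_k$ follows just from the sign conditions in Assumption~\ref{assm:4}(iv), the weight monotonicity $w_{k-1}\geq w_k$, and the CFL bound. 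This is cleaner and more explicit than the paper's mean-value splitting; the price is that it leans more visibly on the exact quadratic structure of $g$, whereas the paper's argument is phrased so that a perturbative extension to ``nearly quadratic'' $g$ would be more transparent. Your closing remark that \eqref{eq:delta_condition} and the $\min\{\rho_L,\rho_R\}$ inequality are unused here is also consistent with the paper, which invokes those only later in Section~\ref{sec:tv_estimate_ac}.
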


Let us first check the monotonicity of the scheme defined by \eqref{eq:nonlocal_lwr_num}-\eqref{eq:nonlocal_lwr_num_2}.
Denote
\begin{align*}
	\theta_j^{n,(i)} = \theta^{(i)}(q_{j-1}^n, q_j^n), \ i=1,2; \quad \theta_j^{n,(i)} = \theta^{(i)}(\rho_{j-1}^n, \rho_j^n), \ i=3,4 \quad \text{for} \quad j\in\mathbb{Z}, \ n\geq0.
\end{align*}
A direct calculation gives:
\begin{subequations}
\begin{align}
	\frac{\partial \mathcal{H}}{\partial \rho^n_{j-1}} &= \lambda \left( \theta_j^{n,(1)} + w_0 \theta_j^{n,(3)} \right); \label{eq:H_grad_1} \\
	\frac{\partial \mathcal{H}}{\partial\rho^n_j} &= 1 + \lambda \left( \theta_j^{n,(2)} - \theta_{j+1}^{n,(1)} + w_1 \theta_j^{n,(3)} + w_0 \theta_j^{n,(4)} - w_0 \theta_{j+1}^{n,(3)} \right); \label{eq:H_grad_2} \\
	\frac{\partial \mathcal{H}}{\partial \rho^n_{j+1}} &= \lambda \left(w_2 \theta_j^{n,(3)} + w_1 \theta_j^{n,(4)} - \theta_{j+1}^{n,(2)} - w_1 \theta_{j+1}^{n,(3)} - w_0 \theta_{j+1}^{n,(4)} \right); \label{eq:H_grad_3} \\
	\frac{\partial \mathcal{H}}{\partial \rho^n_{j+k}} &= \lambda \left( w_{k+1} \theta_j^{n,(3)} - w_k \theta_{j+1}^{n,(3)} + w_k \theta_j^{n,(4)} - w_{k-1} \theta_{j+1}^{n,(4)} \right), \quad k=2,\cdots,m; \label{eq:H_grad_4}
\end{align}
\end{subequations}
where we make the convention that $w_m = w_{m+1} = 0$.

In \eqref{eq:H_grad_4} that corresponds to the nonlocal dependence of the flux on the solution, it is possible that $\theta_j^{n,(3)}<0, \, \theta_j^{n,(4)}<0$ while $\theta_{j+1}^{n,(3)}=\theta_{j+1}^{n,(4)}=0$ at some point $j=j_0$, e.g., if we consider the Riemann type solution:
\begin{align*}
	\rho_j^n = 1, \ j\leq j_0; \quad \rho_j^n = 0, \ j> j_0.
\end{align*}
In this case, $\frac{\partial \mathcal{H}}{\partial \rho^n_{j+k}} < 0$ for $k=2,\cdots,m-1$.
Therefore, one can not deduce \eqref{eq:sol_range_induction} by showing \eqref{eq:nonlocal_lwr_num}-\eqref{eq:nonlocal_lwr_num_2} is a monotone scheme. Here we prove \eqref{eq:sol_range_induction} in an alternative way, which was also used in \cite{goatin2016well,friedrich2018godunov}.

\begin{proof}[Proof of Lemma~\ref{lemm:sol_range_induction}]
We observe the identity $\mathcal{H}(\rho_{\mathrm{min}},\rho_{\mathrm{min}},\rho_{\mathrm{min}},\cdots,\rho_{\mathrm{min}})=\rho_{\mathrm{min}}$ thus we can write the term $\mathcal{H}(\rho^n_{j-1},\rho^n_j,\rho^n_{j+1},\cdots,\rho^n_{j+m})-\rho_{\mathrm{min}}$ as the summation of two parts:
\begin{align*}
	\Delta \mathcal{H}_1=&\mathcal{H}(\rho^n_{j-1},\rho^n_j,\rho^n_{j+1},\rho^n_{j+2}\cdots,\rho^n_{j+m})-\mathcal{H}(\rho_{\mathrm{min}},\rho_{\mathrm{min}},\rho^n_{j+1},\rho^n_{j+2},\cdots,\rho^n_{j+m}),\\
	\Delta \mathcal{H}_2=&\mathcal{H}(\rho_{\mathrm{min}},\rho_{\mathrm{min}},\rho^n_{j+1},\rho^n_{j+2},\cdots,\rho^n_{j+m})-\mathcal{H}(\rho_{\mathrm{min}},\rho_{\mathrm{min}},\rho_{\mathrm{min}},\rho_{\mathrm{min}}\cdots,\rho_{\mathrm{min}}).
\end{align*}
By the mean value theorem,
\begin{align*}
	\Delta \mathcal{H}_1&=\sum_{k=-1,0}\frac{\partial \mathcal{H}}{\partial \rho^n_{j+k}}(\tilde{\rho}^n_{j-1},\tilde{\rho}^n_j,\rho^n_{j+1},\rho^n_{j+2}\cdots,\rho^n_{j+m})(\rho^n_{j+k}-\rho_{\mathrm{min}}),\\
	\Delta \mathcal{H}_2&=\sum_{1\leq k\leq m}\frac{\partial \mathcal{H}}{\partial \rho^n_{j+k}}(\rho_{\mathrm{min}},\rho_{\mathrm{min}},\tilde{\rho}^n_{j+1},\tilde{\rho}^n_{j+2}\cdots,\tilde{\rho}^n_{j+m})(\rho^n_{j+k}-\rho_{\mathrm{min}}),
\end{align*}
where $0\leq\rho_{\mathrm{min}}\leq\tilde{\rho}^n_{j+k}\leq\rho_{\mathrm{max}}\leq1 \ \forall k=-1,0,1,\cdots,m$.

Let us use \eqref{eq:H_grad_1}-\eqref{eq:H_grad_4} with $\theta_j^{n,(i)}$ replaced by $\tilde{\theta}_j^{n,(i)}$ that is with respect to $\tilde{\rho}^n_{j+k}$. 
By Assumption~\ref{assm:4}, we have $\tilde{\theta}_j^{n,(1)} + \tilde{\theta}_j^{n,(3)} \geq0$ giving that the term with respect to $k=-1$ in $\Delta \mathcal{H}_1$ is nonnegative. Moreover, Assumption~\ref{assm:5} implies
that the term with respect to $k=0$ in $\Delta \mathcal{H}_1$ is nonnegative.
For $\Delta \mathcal{H}_2$, we note that
\begin{align*}
	\tilde{\theta}_{j+1}^{n,(3)} = \tilde{\theta}_{j}^{n,(3)} + \gamma_{23} (\tilde{\rho}_{j+1}^n - \rho_{\mathrm{min}}) \leq \tilde{\theta}_{j}^{n,(3)}, \quad \tilde{\theta}_{j+1}^{n,(4)} = \tilde{\theta}_{j}^{n,(4)} + \gamma_{24} (\tilde{\rho}_{j+1}^n - \rho_{\mathrm{min}}) \leq \tilde{\theta}_{j}^{n,(4)},
\end{align*}
which yields that
\begin{align*}
	w_{k+1} \tilde{\theta}_j^{n,(3)} - w_k \tilde{\theta}_{j+1}^{n,(3)} + w_k \tilde{\theta}_j^{n,(4)} - w_{k-1} \tilde{\theta}_{j+1}^{n,(4)} \geq (w_{k+1} - w_k) \tilde{\theta}_j^{n,(3)} + (w_k - w_{k-1}) \tilde{\theta}_j^{n,(4)} \geq0,
\end{align*}
for $k=1,\cdots,m$.
Hence $\frac{\partial \mathcal{H}}{\partial \tilde{\rho}^n_{j+k}}\geq0$ for $k=1,\cdots,m$.
Then we deduce that 
\begin{align*}
	\mathcal{H}(\rho^n_{j-1},\rho^n_j,\rho^n_{j+1},\cdots,\rho^n_{j+m})-\rho_{\mathrm{min}}=\Delta \mathcal{H}_1+\Delta \mathcal{H}_2\geq0.
\end{align*}
Similarly one can show the upper bound estimate $\mathcal{H}(\rho^n_{j-1},\rho^n_j,\rho^n_{j+1},\cdots,\rho^n_{j+m})-\rho_{\mathrm{max}}\leq0$.
\end{proof}

\subsection{One-sided Lipschitz estimate}\label{sec:tv_estimate_ac}

We now derive an one-sided Lipschitz estimate for numerical solutions as given in Lemma~\ref{lem:lip_estimate_1}.
Then we can deduce that the scheme \eqref{eq:nonlocal_lwr_num}-\eqref{eq:nonlocal_lwr_num_2} is monotone and obtain total variation estimates for numerical solutions as given in Lemma~\ref{lem:tv_estimate_ac}.
The total variation diminishing property and the estimate \eqref{eq:numerical_tvd} in Theorem~\ref{thm:a_priori_estimates} are direct corollaries of Lemma~\ref{lem:tv_estimate_ac}.

\begin{lemm}\label{lem:lip_estimate_1}
Suppose all conditions in Theorem~\ref{thm:a_priori_estimates} are given, and that $\{\rho_j^n\}_{j\in\mathbb{Z}}^{n\geq0}$ is the numerical solution produced by the scheme \eqref{eq:nonlocal_lwr_num}-\eqref{eq:nonlocal_lwr_num_2}.
The numerical differences
\begin{align}
	r_j^n = \rho_{j+1}^n - \rho_j^n, \quad j\in\mathbb{Z}, \ n\geq0,
\end{align}
satisfy
\begin{align}
	r_j^n\geq -Lh, \quad j\in\mathbb{Z},\ n\geq0.\label{eq:num_lip_estimate_1}
\end{align}
\end{lemm}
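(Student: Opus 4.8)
\textbf{Proof plan for Lemma~\ref{lem:lip_estimate_1}.}

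The plan is to prove \eqref{eq:num_lip_estimate_1} by induction on $n$. The base case $n=0$ follows immediately from Assumption~\ref{assm:2}: since $\rho_0$ has one-sided Lipschitz constant $L$ and $\rho_j^0$ is the cell average of $\rho_0$ over $\mathcal{C}_j$, we get $r_j^0 = \rho_{j+1}^0 - \rho_j^0 = \frac1h\int_{\mathcal{C}_j}(\rho_0(x+h)-\rho_0(x))\,dx \geq -Lh$. For the induction step, I would write down a recursion for $r_j^{n+1}$ in terms of the differences $\{r_i^n\}$ by subtracting the scheme \eqref{eq:nonlocal_lwr_num} evaluated at $j+1$ and at $j$. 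Using that $\mathcal{H}$ is a smooth (quadratic) function, the mean value theorem expresses $r_j^{n+1} = \sum_{k} a_{j,k}^n r_{j+k}^n$ for suitable coefficients $a_{j,k}^n$ that are combinations of the first-order derivatives $\theta^{(i)}$ and the second-order derivatives $\gamma_{i\ell}$, evaluated at intermediate states; equivalently one may expand directly using the quadratic structure so that the $\gamma_{i\ell}$ are genuine constants. The nonlocal differences $q_{i+1}^n - q_i^n = \sum_{k=0}^{m-1} w_k r_{i+k}^n$ also enter and must be tracked.

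The key structural fact I expect to need is that the coefficients $a_{j,k}^n$ are nonnegative for all $k \neq 0$ (this is essentially the monotonicity computation already sketched for $\mathcal{H}$ in the proof of Lemma~\ref{lemm:sol_range_induction}, now applied to the linearization governing the evolution of differences), while the diagonal-type contribution together with a \emph{negative} term coming from the quadratic cross-derivatives $\gamma_{i3},\gamma_{i4}$ acting on the nonlocal sum $\sum_k w_k r_{j+k}^n$ produces a term that can be controlled because $\sum_k w_k r_{j+k}^n$ is close to a telescoping quantity. Concretely, the cross terms $\gamma_{13},\gamma_{23},\gamma_{14},\gamma_{24}$ are what couple $r$ to $q$-differences, and summing $w_k r_{j+k}^n$ against the monotone weights $w_k$ with $w_k - w_{k+1} \geq cm^{-2}$ from \eqref{eq:weight_monotone} lets one bound the ``bad'' contribution by something like $-\frac{cm^{-2}}{\ \ } \cdot (\text{something}) + w(0)\frac{h}{\delta}\cdot(\text{boundary})$. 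I would then invoke the induction hypothesis $r_{i}^n \geq -Lh$ for all $i$ to turn this into a lower bound on $r_j^{n+1}$. The arithmetic needs to be arranged so that the negative contributions from the nonlocal coupling are dominated: the smallness condition $\delta \leq \delta_0 = \frac{c\rho_{\min}}{2Lw(0)}$ in \eqref{eq:delta_condition} is exactly what makes $Lh \cdot w(0)\frac{h}{\delta} \cdot m \approx Lw(0)\delta$ small relative to $c m^{-2}\rho_{\min}\cdot(\cdot)$, closing the induction with the same constant $L$ rather than a growing one.

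The main obstacle is the bookkeeping in the induction step: one must show that the net coefficient structure, after using $\sum_k w_k = 1$, the sign and sum conditions on $\gamma_{i\ell}$ and $\theta^{(i)}$ in Assumption~\ref{assm:4}, the CFL condition in Assumption~\ref{assm:5}, and the weight-gap condition \eqref{eq:weight_monotone}, yields $r_j^{n+1} \geq -Lh$ without any loss. The subtlety is that the naive bound $r_j^{n+1} \geq -Lh \sum_k a_{j,k}^n$ would require $\sum_k a_{j,k}^n \leq 1$, which holds when all $a_{j,k}^n \geq 0$ and the scheme preserves constants, but in general some $a_{j,k}^n$ from \eqref{eq:H_grad_4} can be negative exactly as noted after that display; hence one cannot simply bound termwise and must instead exploit that the negative coefficients are attached to the nonlocal average, regroup them using summation by parts against the $w_k$, and absorb the residual using $\delta \leq \delta_0$. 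I would organize this as: (1) derive the difference recursion; (2) identify and sign the coefficients, isolating the negative part; (3) perform the summation-by-parts regrouping on the nonlocal term; (4) apply the induction hypothesis and \eqref{eq:delta_condition} to recover $-Lh$.
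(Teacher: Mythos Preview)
Your overall plan---induction on $n$, derive a recursion for $r_j^{n+1}$ in terms of $\{r_i^n\}$, then close the bound using the structural assumptions and $\delta\le\delta_0$---matches the paper's. But the resolution of the obstacle you correctly flag is not ``summation by parts on the negative coefficients''; the paper uses a different and sharper device that your outline misses.

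Because $g$ is quadratic, expanding $r_j^{n+1}$ exactly does \emph{not} give a linear relation $r_j^{n+1}=\sum_k a_{j,k}^n r_{j+k}^n$; it gives an affine--quadratic one,
\[
r_j^{n+1}\;=\;\sum_{k=-1}^{m} c_{j,k}^n\, r_{j+k}^n \;-\; 2\lambda(\gamma_{13}+\gamma_{14})(r_j^n)^2 \;-\; 2\lambda(\gamma_{23}+\gamma_{24})(r_{j+1}^n)^2 ,
\]
where the two quadratic terms carry \emph{nonnegative} coefficients (all $\gamma_{i\ell}\le 0$) and the linear coefficients are arranged to satisfy $\sum_{k} c_{j,k}^n = 1$ exactly. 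This quadratic extraction is the crux: once the quadratic terms are discarded (they only help the lower bound), the problem reduces to showing every $c_{j,k}^n\ge 0$, so that $r_j^{n+1}$ dominates a genuine convex combination of the $r_{j+k}^n$'s and the induction closes with the same $L$. No regrouping is needed. One checks $c_{j,-1}^n,c_{j,0}^n,c_{j,1}^n\ge 0$ directly from Assumptions~\ref{assm:4}--\ref{assm:5}, and for $k\ge 2$ one lower-bounds the nonlocal coefficient $p_{j,k}^n$ (built from $w_{k-1},w_k,w_{k+1}$ and $\theta^{(3)},\theta^{(4)}$) using $w_{k-1}-w_k\ge cm^{-2}$, the bound $\theta^{(3)}+\theta^{(4)}\le -\rho_{\min}$, and the induction hypothesis $r^n\ge -Lh$, to obtain $p_{j,k}^n \ge m^{-2}\bigl(c\rho_{\min}-2w(0)\delta L\bigr)\ge 0$ under \eqref{eq:delta_condition}. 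So your instinct that $\delta\le\delta_0$ is what closes the loop is right, but it enters through this positivity check on the $c_{j,k}^n$'s, not by absorbing a summation-by-parts residual; and the concern about ``negative coefficients'' inherited from the discussion of $\partial\mathcal{H}/\partial\rho_{j+k}^n$ evaporates once the quadratic terms are split off.
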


\begin{proof}
It follows from the definition of the scheme \eqref{eq:nonlocal_lwr_num}-\eqref{eq:nonlocal_lwr_num_2} that
\begin{align}\label{eq:tmp1}
	r_j^{n+1} &= r_j^n + \lambda \left[ 2g(\rho_j^n,\rho_{j+1}^n,q_j^n,q_{j+1}^n) - g(\rho_{j-1}^n,\rho_j^n,q_{j-1}^n,q_j^n) - g(\rho_{j+1}^n,\rho_{j+2}^n,q_{j+1}^n,q_{j+2}^n) \right].
\end{align}
Noting that $g$ is a quadratic function, we can do Taylor's expansions to get
\begin{align*}
	&g(\rho_j^n,\rho_{j+1}^n,q_j^n,q_{j+1}^n) - g(\rho_{j-1}^n,\rho_j^n,q_{j-1}^n,q_j^n) \\ 
	&\;\; = \theta_j^{n,(1)} r_{j-1}^n + \theta_j^{n,(2)} r_j^n + \theta_j^{n,(3)} (q_j^n-q_{j-1}^n) + \theta_j^{n,(4)} (q_{j+1}^n-q_j^n) \\
	&\quad + \gamma_{13}r_{j-1}^n(q_j^n-q_{j-1}^n) + \gamma_{14}r_{j-1}^n(q_{j+1}^n-q_j^n) + \gamma_{23}r_j^n(q_j^n-q_{j-1}^n) + \gamma_{24}r_j^n(q_{j+1}^n-q_j^n), \\
    &g(\rho_{j+1}^n,\rho_{j+2}^n,q_{j+1}^n,q_{j+2}^n) - g(\rho_{j-1}^n,\rho_j^n,q_{j-1}^n,q_j^n) \\
    &\;\;	= \theta_j^{n,(1)} (r_{j-1}^n+r_j^n) + \theta_j^{n,(2)} (r_j^n+r_{j+1}^n) + \theta_j^{n,(3)} (q_{j+1}^n-q_j^n+q_j^n-q_{j-1}^n)\\
    &\quad + \theta_j^{n,(4)} (q_{j+2}^n-q_{j+1}^n+q_{j+1}^n-q_j^n) + \gamma_{13} (r_{j-1}^n+r_j^n)(q_{j+1}^n-q_j^n+q_j^n-q_{j-1}^n)\\
    &\quad + \gamma_{14} (r_{j-1}^n+r_j^n) (q_{j+2}^n-q_{j+1}^n+q_{j+1}^n-q_j^n) 
    	+ \gamma_{23} (r_j^n+r_{j+1}^n) (q_{j+1}^n-q_j^n+q_j^n-q_{j-1}^n) \\
     &\quad + \gamma_{24} (r_j^n+r_{j+1}^n) (q_{j+2}^n-q_{j+1}^n+q_{j+1}^n-q_j^n),
\end{align*}
where
$
	\{\theta_j^{n,(i)} = \theta^{(i)}(q_{j-1}^n, q_j^n)\}_{i=1}^2$ and
 $\{\theta_j^{n,(i)} = \theta^{(i)}(\rho_{j-1}^n, \rho_j^n)\}_{i=3}^4$ for
 $j\in\mathbb{Z}$ and $ n\geq0$.
Moreover, from the definition of $q_j^n$ give in \eqref{eq:discrete_nonlocal_density} we obtain:
\begin{align*}
	q_{j+1}^n - q_j^n = \sum_{k=0}^{m-1} w_k r_{j+k}^n.
\end{align*}
Therefore \eqref{eq:tmp1} can be rewritten as
\begin{align*}
	& r_j^{n+1} = \lambda \theta_j^{n,(1)} r_{j-1}^n + \left( 1 + \lambda \theta_j^{n,(2)} - \lambda \theta_j^{n,(1)} \right) r_j^n - \lambda \theta_j^{n,(2)} r_{j+1}^n \\
	&\quad + \lambda \left( \theta_j^{n,(3)} + \gamma_{13}r_{j-1}^n + (\gamma_{23}-\gamma_{13})r_j^n -\gamma_{23}r_{j+1}^n \right) \sum_{k=0}^{m-1} w_k r_{j+k-1}^n \\
	&\quad + \lambda \left( \theta_j^{n,(4)}-\theta_j^{n,(3)} + (\gamma_{14}-\gamma_{13}) r_{j-1}^n + (\gamma_{24}-\gamma_{13}-\gamma_{14}-\gamma_{23}) r_j^n - (\gamma_{23}+\gamma_{24}) r_{j+1}^n \right) \sum_{k=0}^{m-1} w_k r_{j+k}^n \\
	&\quad - \lambda \left( \theta_j^{n,(4)} + \gamma_{14}r_{j-1}^n + (\gamma_{14}+\gamma_{24})r_j^n + \gamma_{24}r_{j+1}^n \right) \sum_{k=0}^{m-1} w_k r_{j+k+1}^n.
\end{align*}
In the above expression, $r_j^{n+1}$ is represented as a linear combination of $r_{j-1}^n,\cdots,r_{j+m}^n$.
By a direct calculation, the summation of the coefficients before the terms $r_{j-1}^n,\cdots,r_{j+m}^n$ is
\begin{align*}
	S = 1 - 2\lambda \left( (\gamma_{13}+\gamma_{14})r_j^n + (\gamma_{23}+\gamma_{24})r_{j+1}^n \right),
\end{align*}
where the fact $\gamma_{13}+\gamma_{14}+\gamma_{23}+\gamma_{24}=-1$ is used.

Since the summation does not equal one, we split two quadratic terms with respect to $r_j^n$ and $r_{j+1}^n$, which gives the form
\begin{align}\label{eq:quad_comb}
	r_j^{n+1} = \sum_{-1\leq k\leq m}c_{j,k}^n r_{j+k}^n - 2\lambda (\gamma_{13}+\gamma_{14}) (r_j^n)^2 - 2\lambda (\gamma_{23}+\gamma_{24}) (r_{j+1}^n)^2, 
\end{align}
such that $\sum_{-1\leq k\leq m}c_{j,k}^n=1$.
The coefficients $\{c_{j,k}^n\}_{-1\leq k\leq m}$ are given by:
\begin{subequations}\label{eq:coefficients}
\begin{align}
    c_{j,-1}^n &= \lambda \theta_j^{n,(1)} + \lambda w_0 \left( \theta_j^{n,(3)} + \gamma_{13}r_{j-1}^n + (\gamma_{23}-\gamma_{13})r_j^n -\gamma_{23}r_{j+1}^n \right); \\
    c_{j,0}^n &= 1 + \lambda \left( \theta_j^{n,(2)} - \theta_j^{n,(1)} \right) + \lambda p_{j,0}^n + 2\lambda (\gamma_{13}+\gamma_{14}) r_j^n; \\
    c_{j,1}^n &= -\lambda \theta_j^{n,(2)} + \lambda p_{j,1}^n + 2\lambda (\gamma_{23}+\gamma_{24}) r_{j+1}^n; \\
    c_{j,k}^n &= \lambda p_{j,k}^n, \quad k=2,\cdots,m;
\end{align}
\end{subequations}
where
\begin{align}\label{eq:coefficients_p}
	p_{j,k}^n =& w_{k+1} \left( \theta_j^{n,(3)} + \gamma_{13}r_{j-1}^n + (\gamma_{23}-\gamma_{13})r_j^n -\gamma_{23}r_{j+1}^n \right) \\
	& + w_k \left( \theta_j^{n,(4)}-\theta_j^{n,(3)} + (\gamma_{14}-\gamma_{13}) r_{j-1}^n + (\gamma_{24}-\gamma_{13}-\gamma_{14}-\gamma_{23}) r_j^n - (\gamma_{23}+\gamma_{24}) r_{j+1}^n \right) \notag\\
	& - w_{k-1} \left( \theta_j^{n,(4)} + \gamma_{14}r_{j-1}^n + (\gamma_{14}+\gamma_{24})r_j^n + \gamma_{24}r_{j+1}^n \right), \notag
\end{align}
and we make the convention that $w_{-1} = w_m = w_{m+1} = 0$.

The initial one-sided Lipschitz condition \eqref{eq:ini_lip_const} gives $r_j^0\geq -Lh$ for all $j\in\mathbb{Z}$.
We next show that if \eqref{eq:num_lip_estimate_1} holds for any $n\geq0$,  then it is also true for $n+1$. Then \eqref{eq:num_lip_estimate_1} follows by induction.

Let us use \eqref{eq:quad_comb}-\eqref{eq:coefficients_p}. 
By Assumptions~\ref{assm:3}-\ref{assm:5},
we have $c_{j,k}^n\geq0$ for $k=-1,0$ and $-\lambda \theta_j^{n,(2)} + 2\lambda (\gamma_{23}+\gamma_{24}) r_{j+1}^n\geq0$. To show $c_{j,k}^n\geq0$ for all $-1\leq k\leq m$, it suffices to show $p_{j,k}^n\geq0$ for all $k=1,\cdots,m$.
By Assumptions~\ref{assm:3}-\ref{assm:5}, we have that
\begin{align*}
	w_{k+1} \leq w_k \leq w_{k-1} \leq w(0)m^{-1}, \quad 
	w_{k-1}-w_k \geq cm^{-2}, \quad w_k-w_{k+1} \geq cm^{-2},
\end{align*}
and $\theta_j^{n,(3)} + \theta_j^{n,(4)} \leq -\rho_{\mathrm{min}}$, where the constant $c$ is as in \eqref{eq:weight_monotone} and the constant $\rho_{\mathrm{min}}$ is as in \eqref{eq:ini_lip_const}. Then we deduce that
\begin{align*}
	p_{j,k}^n \geq& -2 w(0)m^{-1} [(\gamma_{13}+\gamma_{14})r_j^n + (\gamma_{23}+\gamma_{24})r_{j+1}^n] +  cm^{-2} (\theta_j^{n,(3)} + \theta_j^{n,(4)}) \\
	\geq& -2 w(0)m^{-1}Lh +  cm^{-2} \rho_{\mathrm{min}} 
	\geq m^{-2} (c\rho_{\mathrm{min}} - 2w(0)\delta L) 
	\geq 0,
\end{align*}
provided $0<\delta\leq\delta_0=\frac{c\rho_{\mathrm{min}}}{2Lw(0)}$.

Now we have that the coefficients $\{c_{j,k}^n\}_{-1\leq k\leq m}$ are all nonnegative and the sum of the coefficients $\sum_{-1\leq k\leq m}c_{j,k}^n=1$.
Therefore $r_j^{n+1}$ is a convex combination of $r_{j-1}^n,r_j^n,r_{j+1}^n,\cdots,r_{j+m}^n$ plus the nonnegative quadratic terms $- 2\lambda (\gamma_{13}+\gamma_{14}) (r_j^n)^2 - 2\lambda (\gamma_{23}+\gamma_{24}) (r_{j+1}^n)^2$. Hence we have:
\begin{align*}
	\inf_{j\in\mathbb{Z}}r_j^{n+1}\geq\inf_{j\in\mathbb{Z}}r_j^n\geq -Lh,
\end{align*}
which completes the proof.
\end{proof}

Based on Lemma~\ref{lem:lip_estimate_1}, a more careful analysis gives the following sharper estimate corresponding to the entropy condition \eqref{eq:oleinik}.

\begin{lemm}\label{lem:lip_estimate_2}
	Suppose all conditions in Theorem~\ref{thm:a_priori_estimates} are given, and that $0<h<h_0$ with $h_0>0$ only depending on $1-\lambda\sum_{i=1}^4\norm{\theta^{(i)}}_\infty$ and $\frac{c\rho_{\mathrm{min}}}{2Lw(0)}-\delta$. We have:
	\begin{align}
	 	L^n\leq\frac{1}{\frac{1}{L^0}+2n\tau}\leq\frac{1}{2n\tau},\quad n\geq1,\label{eq:num_lip_estimate_2}
	\end{align}
	where
	\begin{align}
		L^n\triangleq\sup_{j\in\mathbb{Z}}\max\left\{-\frac{r_j^n}{h},0\right\},\quad n\geq0.
	\end{align}
\end{lemm}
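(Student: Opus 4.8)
The plan is an induction on $n$ organized around a one-step discrete Riccati inequality. The monotonicity already contained in the proof of Lemma~\ref{lem:lip_estimate_1} gives $\inf_j r_j^{n+1}\ge\inf_j r_j^n$, hence $L^{n+1}\le L^n$, and from \eqref{eq:ini_lip_const} and \eqref{eq:ini_data_discrete} one has $L^0\le L$; so it suffices to prove
\[
\frac{1}{L^{n+1}}\ \ge\ \frac{1}{L^n}+2\tau ,
\]
with the convention $1/0=+\infty$ (if $L^n=0$ then $L^{n'}=0$ for all $n'\ge n$ and \eqref{eq:num_lip_estimate_2} is trivial). Iterating and using $L^0\le L$ gives $1/L^n\ge 1/L^0+2n\tau$, which is \eqref{eq:num_lip_estimate_2}. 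Writing $\epsilon\doteq hL^n\ (\le hL)$, I would deduce the Riccati inequality from the sharpened pointwise estimate
\[
r_j^{n+1}\ \ge\ -\epsilon+2\lambda\epsilon^2\qquad\text{for all }j\in\mathbb Z,
\]
since it yields $-r_j^{n+1}/h\le L^n-2\tau(L^n)^2$, hence $L^{n+1}\le L^n-2\tau(L^n)^2$, and then $\tfrac1{1-x}\ge 1+x$ applied with $x=2\tau L^n=2\lambda\epsilon<1$ (valid once $h$ is small) converts this to the Riccati form. The bound is sharp: on a rarefaction ramp $\rho^n_{j+k}=\bar\rho-k\epsilon$ every $r^n_{j+k}$ and every $q^n_{j+k+1}-q^n_{j+k}$ equals $-\epsilon$, and \eqref{eq:nonlocal_lwr_num_2} returns exactly $-\epsilon+2\lambda\epsilon^2$.

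For the pointwise estimate I would re-organize the representation in \eqref{eq:quad_comb}--\eqref{eq:coefficients_p} into a frozen convex combination plus a fixed quadratic remainder. Since $g$ is quadratic and $\mathcal H(\bar\rho,\dots,\bar\rho)=\bar\rho$, extracting from every coefficient the part linear in the differences $r^n_{j+l}$ and merging it with the genuine quadratic terms leads to
\[
r_j^{n+1}\ =\ \sum_{k=-1}^{m}\hat c_k(\rho_j^n)\,r^n_{j+k}\ +\ R\big(r^n_{j-1},\dots,r^n_{j+m}\big),
\]
where the $\hat c_k$ are affine in $\rho_j^n$, satisfy $\sum_k\hat c_k(\rho_j^n)=1$, and are nonnegative — with $\hat c_0,\hat c_1$ bounded below by some $\kappa_0>0$ — by the sign conditions of Assumption~\ref{assm:4}(iii)--(iv), the monotonicity $w_k\ge w_{k+1}$ of Assumption~\ref{assm:3}, the maximum principle $0\le\rho_j^n\le1$ already shown in Section~\ref{sec:maxm_principle}, and the strict CFL $\lambda\sum_i\norm{\theta^{(i)}}_\infty<1$; and $R$ is a \emph{fixed} (solution-independent) homogeneous quadratic form built from the $\gamma_{ij}$ and the $w_k$, with $R(\mathbf 1)=2\lambda$ (this is the ramp computation above) and $R(s)\ge0$ for $s\ge0$ componentwise, up to cross terms each carrying a weight $w_k=O(m^{-1})$. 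For the Lax--Friedrichs schemes of the family this is explicit: with $v(\rho)=1-\rho$ and viscosity constant $\alpha$, $\hat c_{-1}(\rho_j^n)=\lambda(\tfrac{\alpha+1}{2}-\rho_j^n)$, $\hat c_0=1-\lambda\alpha$, $\hat c_1(\rho_j^n)=\lambda(\tfrac{\alpha-1}{2}+\rho_j^n)$, all $\ge\tfrac\lambda2$ for $\alpha\ge2$ by the maximum principle and CFL, and $R(r)=\tfrac\lambda2 r_{j-1}^2+\lambda r_j r_{j+1}+\tfrac\lambda2 r_{j+1}^2$.

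Writing $r^n_{j+k}=-\epsilon+s_{j+k}$ with $0\le s_{j+k}\le 1+\epsilon$ (the upper bound again from the maximum principle), and using $R(-\epsilon\mathbf 1+s)=2\lambda\epsilon^2-\epsilon B(\mathbf 1,s)+R(s)$ with $B$ the polar bilinear form of $R$, the pointwise estimate becomes
\[
\sum_{k=-1}^m\hat c_k(\rho_j^n)\,s_{j+k}\ +\ R(s)\ \ge\ \epsilon\,B(\mathbf 1,s).
\]
Here $B(\mathbf 1,s)=\sum_k\beta_k s_{j+k}$ with $|\beta_k|\le C$ for $k\in\{-1,0,1\}$ and $|\beta_k|\le C w_{|k|}=O(m^{-1})$ otherwise, so the right-hand side is at most $\epsilon C(s_{j-1}+s_j+s_{j+1})+\epsilon C\sum_k w_k s_{j+k}$, and $\sum_k w_k s_{j+k}=(q^n_{j+1}-q^n_j)+\epsilon\le 1+\epsilon$ is bounded. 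On the left, $\hat c_0 s_j+\hat c_1 s_{j+1}\ge\kappa_0(s_j+s_{j+1})$, the remaining $\hat c_k(\rho_j^n)s_{j+k}$ are nonnegative, and $R(s)$ supplies its nonnegative part while its $O(m^{-1})$ cross terms are absorbed by the matching $O(m^{-1})$ pieces of the right-hand side. When $\hat c_{-1}(\rho_j^n)$ is small, its affine form forces $\rho_j^n$ near $1$, whence the maximum principle forces $r^n_{j-1}\ge0$, i.e. $s_{j-1}\ge\epsilon$, and then the $s_{j-1}$-term on the right is still dominated — by $R$'s quadratic contribution when $s_{j-1}\gtrsim\epsilon$, and trivially (being $O(\epsilon^2)$) when $s_{j-1}\lesssim\epsilon$. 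Taking $h$, hence $\epsilon=hL^n\le hL$, small enough — a smallness quantified only through the strict slacks $1-\lambda\sum_i\norm{\theta^{(i)}}_\infty$ and $\tfrac{c\rho_{\mathrm{min}}}{2Lw(0)}-\delta$, all other constants being fixed — keeps every effective coefficient $\hat c_k(\rho_j^n)-O(\epsilon)$ nonnegative and closes the inequality; the subcase split according to the signs of $r^n_{j-1},r^n_j,r^n_{j+1}$ only helps, since a nonnegative difference improves the convex-combination part at least as much as it can spoil $R$. This proves $r_j^{n+1}\ge-\epsilon+2\lambda\epsilon^2$ for every $j$, hence the Riccati step and \eqref{eq:num_lip_estimate_2}.

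I expect the last step to be the genuine obstacle: controlling the quadratic remainder $R$, which is exactly what supplies the decisive $2\lambda\epsilon^2$ gain at the extremal ramp, against its cross-interactions with the possibly order-one differences $r^n_{j+l}$ over the $O(m)$-point nonlocal stencil once the profile departs from the ramp. This is where the quantitative positivity of $\hat c_0,\hat c_1$ (from the strict CFL), the kernel gap $\tfrac{c\rho_{\mathrm{min}}}{2Lw(0)}-\delta>0$, the maximum principle, and the precise algebraic identities of Assumption~\ref{assm:4}(iii)--(iv) are all used in an essential way; the remaining nonlocal bookkeeping is routine once one notes that each such term has a weight $w_k=O(m^{-1})$ and that $\sum_k w_k r^n_{j+k}=q^n_{j+1}-q^n_j$ inherits the same one-sided bound $\ge-\epsilon$ as the individual $r^n_{j+k}$.
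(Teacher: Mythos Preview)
Your overall strategy is exactly right and matches the paper: derive the one-step Riccati inequality $L^{n+1}\le L^n-2\tau(L^n)^2$ from the pointwise bound $r_j^{n+1}\ge -hL^n+2\lambda(hL^n)^2$, then iterate. Where you diverge from the paper --- and where your argument becomes both more complicated and genuinely incomplete --- is in how you attack that pointwise bound.

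You re-decompose $r_j^{n+1}$ into a ``frozen'' linear part with coefficients $\hat c_k(\rho_j^n)$ plus a full homogeneous quadratic form $R$ in all $m+2$ differences, then substitute $r^n_{j+k}=-\epsilon+s_{j+k}$ and try to control the bilinear cross term $\epsilon B(\mathbf 1,s)$ against $\sum\hat c_k s_{j+k}+R(s)$. This generates exactly the obstacle you flag: $R$ has $O(m)$ nonlocal cross terms of indefinite sign, and your case analysis for $s_{j-1}$ (``$\hat c_{-1}$ small forces $\rho_j^n$ near $1$ forces $r_{j-1}^n\ge 0$'') is not tight --- $\rho_j^n$ close to $1$ does not force $r_{j-1}^n\ge 0$, only $r_{j-1}^n\ge \rho_j^n-1$, which can still be negative.

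The paper avoids all of this by \emph{not} re-decomposing. It works directly with the representation \eqref{eq:quad_comb} from Lemma~\ref{lem:lip_estimate_1}, where the coefficients $c_{j,k}^n$ are solution-dependent but already satisfy $c_{j,k}^n\ge0$ and $\sum_k c_{j,k}^n=1$, and --- this is the key structural point you miss --- the explicit quadratic remainder is \emph{only} $-2\lambda(\gamma_{13}+\gamma_{14})(r_j^n)^2-2\lambda(\gamma_{23}+\gamma_{24})(r_{j+1}^n)^2$, with no cross terms and no dependence on $r_{j+k}^n$ for $k\notin\{0,1\}$. For $k\neq 0,1$ one simply uses $c_{j,k}^n r_{j+k}^n\ge -c_{j,k}^n L^n h$. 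For $k=0,1$, group the linear and quadratic pieces into the convex functions
\[
b_0(r)=c_{j,0}^n\,r-2\lambda(\gamma_{13}+\gamma_{14})r^2,\qquad b_1(r)=c_{j,1}^n\,r-2\lambda(\gamma_{23}+\gamma_{24})r^2,
\]
and check that $b_0'(-L^nh),\,b_1'(-L^nh)\ge 0$ once $h<h_0$ (this is where the strict slacks $1-\lambda\sum_i\|\theta^{(i)}\|_\infty>0$ and $\tfrac{c\rho_{\min}}{2Lw(0)}-\delta>0$ enter, giving uniform positive lower bounds on $c_{j,0}^n$ and $c_{j,1}^n$ respectively). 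Convexity then gives $b_i(r^n_{j+i})\ge b_i(-L^nh)$. Summing over all $k$ and using $\sum_k c_{j,k}^n=1$ and $\gamma_{13}+\gamma_{14}+\gamma_{23}+\gamma_{24}=-1$ yields $r_j^{n+1}\ge -L^nh+2\lambda(L^nh)^2$ directly, with no case analysis and no nonlocal cross terms to absorb.
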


\begin{proof}
We still start with \eqref{eq:quad_comb}. For $k\neq0,1$, we use the estimate
\begin{align}\label{eq:c_tmp1}
	c_{j,k}^nr_{j+k}^n\geq -c_{j,k}^nL^nh.
\end{align}
For $k=0$ and $k=1$, we consider the following quadratic functions:
\begin{align*}
	b_0(r_j^n) \doteq c_{j,0}^nr_j^n - 2\lambda (\gamma_{13}+\gamma_{14}) (r_j^n)^2 , \quad b_1(r_{j+1}^n) \doteq c_{j,1}^nr_{j+1}^n - 2\lambda (\gamma_{23}+\gamma_{24}) (r_{j+1}^n)^2,
\end{align*}
respectively.
One can verify that
\begin{align*}
	b'_0(r_j^n) &= c_{j,0}^n - 4\lambda (\gamma_{13}+\gamma_{14}) r_j^n \geq c_{j,0}^n + 4\lambda (\gamma_{13}+\gamma_{14})L^nh \geq C_0 - 4\lambda Lh, \\
	b'_1(r_{j+1}^n) &= c_{j,1}^n - 4\lambda (\gamma_{23}+\gamma_{24}) r_{j+1}^n \geq c_{j,1}^n - 4\lambda (\gamma_{23}+\gamma_{24})L^nh \geq C_1 - 4 \lambda Lh,
\end{align*}
when $r_j^n, r_{j+1}^n \geq -L^nh$, where the constant $C_0>0$ only depends on $1-\lambda\sum_{i=1}^4\norm{\theta^{(i)}}_\infty$ and the constant $C_1>0$ only depends on $\frac{c\rho_{\mathrm{min}}}{2Lw(0)}-\delta$. Therefore there exists $h_0>0$ only depending on $1-\lambda\sum_{i=1}^4\norm{\theta^{(i)}}_\infty$ and $\frac{c\rho_{\mathrm{min}}}{2Lw(0)}-\delta$ such that $b'_0(r_j^n)\geq0, b'_1(r_{j+1}^n)\geq0$ whenever $h<h_0$.
In this case, we have
\begin{align}\label{eq:c_tmp2}
	b_0(r_j^n) \geq -c_{j,0}^nL^nh - 2\lambda (\gamma_{13}+\gamma_{14}) (L^nh)^2 , \quad b_1(r_{j+1}^n) \geq -c_{j,1}^nL^nh - 2\lambda (\gamma_{23}+\gamma_{24}) (L^nh)^2.
\end{align}
Summing up \eqref{eq:c_tmp1} for $k\neq0,1$ and \eqref{eq:c_tmp2} for $k=0,1$, and noting that $\gamma_{13}+\gamma_{14}+\gamma_{23}+\gamma_{24}=-1$, we obtain:
	\begin{align*}
		r_j^{n+1}\geq-\left(\sum_{-1\leq k\leq m}c_{j,k}^n\right)L^nh+2\lambda(L^nh)^2=-L^nh+2(L^n)^2h\tau,
	\end{align*}
	which yields
$		L^{n+1}\leq L^n-2(L^n)^2\tau$.
	Then \eqref{eq:num_lip_estimate_2} follows by induction.
\end{proof}

Now let us go back to check the monotonicity of the scheme \eqref{eq:nonlocal_lwr_num}-\eqref{eq:nonlocal_lwr_num_2}. With the derived one-sided Lipschitz estimate \eqref{eq:num_lip_estimate_1}, a calculation similar to that in the proof of Lemma~\ref{lem:lip_estimate_1} gives:
\begin{align*}
	\frac{\partial \mathcal{H}}{\partial \rho^n_{j+k}} &= \lambda \left( w_{k+1} \theta_j^{n,(3)} - w_k \theta_{j+1}^{n,(3)} + w_k \theta_j^{n,(4)} - w_{k-1} \theta_{j+1}^{n,(4)} \right) \\
	&\geq \lambda m^{-2} (c\rho_{\mathrm{min}} - 2w(0)\delta L) \geq0,
\end{align*}
for $k=2,\cdots,m$. In this case, the scheme \eqref{eq:nonlocal_lwr_num}-\eqref{eq:nonlocal_lwr_num_2} is monotone with respect to each of its arguments. As a direct corollary, it is total variation diminishing (TVD). So we have the following lemma.

\begin{lemm}\label{lem:tv_estimate_ac}
	Under the same conditions as in Lemma~\ref{lem:lip_estimate_1}, the numerical solution $\{\rho_j^n\}_{j\in\mathbb{Z}}^{n\geq0}$ produced by the scheme \eqref{eq:nonlocal_lwr_num}-\eqref{eq:nonlocal_lwr_num_2} satisfies:
	\begin{align}
		\sum_{j\in\mathbb{Z}}|r_j^n|\leq& \sum_{j\in\mathbb{Z}}|r_j^0|\leq \mathrm{TV}(\rho_0),\quad n\geq0;\label{eq:tv_estimate_ac}\\
		\sum_{j\in\mathbb{Z}}|\rho_j^{n+1}-\rho_j^n|\leq& \lambda \norm{\nabla g}_\infty \sum_{j\in\mathbb{Z}}|r_j^n|\leq  \mathrm{TV}(\rho_0),\quad n\geq0.\label{eq:tv_estimate_2_ac}
	\end{align}
\end{lemm}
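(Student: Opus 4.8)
The plan is to deduce \eqref{eq:tv_estimate_ac} from the monotonicity and the conservation form of the scheme, and \eqref{eq:tv_estimate_2_ac} from a direct estimate of the flux difference. Note first that the explicit expansion \eqref{eq:quad_comb} obtained in the proof of Lemma~\ref{lem:lip_estimate_1} does \emph{not} by itself yield \eqref{eq:tv_estimate_ac}: after taking absolute values and summing in $j$, the two nonnegative quadratic remainders $-2\lambda(\gamma_{13}+\gamma_{14})(r_j^n)^2$ and $-2\lambda(\gamma_{23}+\gamma_{24})(r_{j+1}^n)^2$ survive and could a priori increase the total variation. Instead, we invoke the classical fact (Harten's lemma, equivalently the Crandall--Tartar $\mathbf{L}^1$-contraction argument; see, e.g., \cite{leveque2002finite}) that a scheme which is in conservation form with a $j$-independent numerical flux and is monotone with respect to each of its arguments is total variation diminishing. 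Monotonicity, i.e.\ $\partial\mathcal{H}/\partial\rho_{j+k}^n\geq0$ for $k=-1,0,1,\dots,m$, has just been established above from the one-sided Lipschitz bound \eqref{eq:num_lip_estimate_1} together with Assumptions~\ref{assm:3}--\ref{assm:5} and the restriction $\delta\leq\delta_0$.

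Concretely, for \eqref{eq:tv_estimate_ac} I would introduce the shifted sequence $\sigma_j^n=\rho_{j+1}^n$; since the weights $\{w_k\}$ and $g$ do not depend on $j$, the scheme is translation invariant, so $\sigma_j^{n+1}=\mathcal{H}(\sigma_{j-1}^n,\dots,\sigma_{j+m}^n)$ as well. For any two sequences $a,b$ with $a-b\in\ell^1$ one has $\sum_j(\mathcal{H}(a)_j-\mathcal{H}(b)_j)=\sum_j(a_j-b_j)$, because the flux differences telescope and the boundary terms vanish in the limit --- here $\rho_0\in\mathbf{BV}$ ensures (by induction in $n$) that the $r_j^n$ are summable and that the numerical solution tends to a common finite limit at $\pm\infty$, so the numerical fluxes at $\pm\infty$ cancel. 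Applying the Crandall--Tartar lemma to $a=\rho^n$ and $b=\sigma^n$ gives $\sum_j|r_j^{n+1}|=\sum_j|\sigma_j^{n+1}-\rho_j^{n+1}|\leq\sum_j|\sigma_j^n-\rho_j^n|=\sum_j|r_j^n|$, and iterating in $n$ yields the first inequality of \eqref{eq:tv_estimate_ac}. For the second, from \eqref{eq:ini_data_discrete} we have $r_j^0=\tfrac1h\int_0^h(\rho_0(x_{j+1/2}+y)-\rho_0(x_{j-1/2}+y))\,dy$, so by Fubini $\sum_j|r_j^0|\leq\tfrac1h\int_0^h\sum_j|\rho_0(x_{j+1/2}+y)-\rho_0(x_{j-1/2}+y)|\,dy\leq\mathrm{TV}(\rho_0)$; that is, cell averaging does not increase total variation.

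For \eqref{eq:tv_estimate_2_ac} I would start from the conservation form \eqref{eq:nonlocal_lwr_num_2}: $\rho_j^{n+1}-\rho_j^n=\lambda(g(\rho_{j-1}^n,\rho_j^n,q_{j-1}^n,q_j^n)-g(\rho_j^n,\rho_{j+1}^n,q_j^n,q_{j+1}^n))$. The maximum principle \eqref{eq:numerical_maxm_principle} keeps every $\rho_j^n$ in $[0,1]$, and $q_j^n=\sum_{k=0}^{m-1}w_k\rho_{j+k}^n\in[0,1]$ as a convex combination (using $w_k\geq0$ and the normalization \eqref{eq:normalization_condition}). Since $g$ is quadratic, hence $\mathbf{C}^1$, a first-order Taylor/mean value expansion of the bracket along the segment joining the two argument vectors gives the pointwise bound $|\rho_j^{n+1}-\rho_j^n|\leq\lambda(\norm{\theta^{(1)}}_\infty|r_{j-1}^n|+\norm{\theta^{(2)}}_\infty|r_j^n|+\norm{\theta^{(3)}}_\infty|q_j^n-q_{j-1}^n|+\norm{\theta^{(4)}}_\infty|q_{j+1}^n-q_j^n|)$; using $q_{j+1}^n-q_j^n=\sum_{k=0}^{m-1}w_k r_{j+k}^n$ from \eqref{eq:discrete_nonlocal_density}, summing over $j$, interchanging the two sums and using $\sum_{k=0}^{m-1}w_k=1$ gives $\sum_j|\rho_j^{n+1}-\rho_j^n|\leq\lambda(\sum_{i=1}^4\norm{\theta^{(i)}}_\infty)\sum_j|r_j^n|=\lambda\norm{\nabla g}_\infty\sum_j|r_j^n|$. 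Combining with \eqref{eq:tv_estimate_ac} and the CFL condition of Assumption~\ref{assm:5} (which reads $\lambda\norm{\nabla g}_\infty<1$) yields $\sum_j|\rho_j^{n+1}-\rho_j^n|\leq\mathrm{TV}(\rho_0)$.

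No step here is genuinely hard --- this is a corollary-level argument --- and the only two points requiring some care are: (i) making the Crandall--Tartar/Harten reduction rigorous for a stencil of width $m+2$ on the unbounded lattice, in particular the telescoping and vanishing of the boundary fluxes, which is exactly where the hypothesis $\rho_0\in\mathbf{BV}$ enters; and (ii) the bookkeeping of the double sum in \eqref{eq:tv_estimate_2_ac}, where it is precisely the normalization $\sum_{k=0}^{m-1}w_k=1$ that prevents the nonlocal terms from contributing a factor growing with $m=\lceil\delta/h\rceil$ --- this is what makes the total variation bound \eqref{eq:numerical_tvd} uniform in $\delta$, in contrast with the $\delta$-dependent bounds of \cite{goatin2016well,Blandin2016}.
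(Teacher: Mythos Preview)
Your proposal is correct and follows essentially the same route as the paper, which simply defers to the standard argument for monotone conservative schemes in \cite{leveque2002finite}; you have filled in the details (Crandall--Tartar $\mathbf{L}^1$-contraction via the shifted sequence for \eqref{eq:tv_estimate_ac}, direct flux estimate using the normalization \eqref{eq:normalization_condition} for \eqref{eq:tv_estimate_2_ac}) that the paper leaves implicit. One point you might make explicit for completeness: since the monotonicity of $\mathcal{H}$ established just before the lemma is \emph{conditional} on the one-sided Lipschitz bound \eqref{eq:num_lip_estimate_1}, the Crandall--Tartar reduction is justified because the set $\{r_j\geq -Lh\}$ is convex and closed under the lattice operations $\vee,\wedge$, so $\rho^n$, its shift $\sigma^n$, their pointwise max/min, and all convex combinations remain in the region where $\partial\mathcal{H}/\partial\rho_{j+k}^n\geq0$.
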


The proof of the Lemma is similar to that given in \cite{leveque2002finite} for monotone schemes of scalar conservation laws. The total variation estimate \eqref{eq:numerical_tvd} follows immediately from the above lemma.

\subsection{Convergence}\label{sec:convergence}

In this subsection, we are going to give the proofs of Theorem~\ref{thm:ac} and Theorem~\ref{thm:numerical_convergence}.
We recall that the numerical solution is defined as:
\begin{align}\label{eq:sol_piece_const}
	\rho^{\delta,h}(t,x)=\sum_{j\in\mathbb{Z}}\sum_{n=0}^\infty\rho_j^n\mathbf{1}_{\mathcal{C}_j\times\mathcal{T}^n}(t,x),
\end{align}
where $\mathcal{C}_j=(x_{j-1/2},x_{j+1/2}) \ \forall j\in\mathbb{Z}$ and $\mathcal{T}^n=(t^n,t^{n+1})\ \forall n\geq0$.

\begin{proof}[Proof of Theorem~\ref{thm:ac}]
Let us consider the family of numerical solutions $
\{ \rho^{\delta,h}\}_{0 < \delta \leq \delta_0, 0 < h < 1}$,
where $\delta_0$ is as in \eqref{eq:delta_condition}.
Theorem~\ref{thm:a_priori_estimates} gives the a priori $\mathbf{L}^\infty$ and total variation estimates on $\rho^{\delta,h}$, thus the family of numerical solutions is uniformly bounded in $\mathbf{BV}_{\mathrm{loc}}([0,+\infty) \times \mathbb{R})$. Thus, it is precompact in the $\mathbf{L}^1_{\mathrm{loc}}$ norm (see \cite{evans2018measure}), and there exists a sequence           
 $\{\rho^{\delta_l, h_l}\}$ converging in $\mathbf{L}^1_{\mathrm{loc}}([0,+\infty)\times\mathbb{R})$ to a limit function $\rho^{*}$ as $\delta_l\to0, h_l\to0$ simultaneously. Noting the uniqueness of the entropy solution, to show the convergence of $\rho^{\delta,h}$ when $\delta\to0$ and $h\to0$ along an arbitrary path, we only need to show $\rho^{*}$ satisfies both the weak form \eqref{eq:local_sol} and the entropy condition \eqref{eq:oleinik}.

For any test function $\phi\in\mathbf{C}^1_{\mathrm{c}}\left([0,+\infty)\times\mathbb{R}\right)$, we denote $\phi_j^n=\phi(t^n,x_j)$ for all $j\in\mathbb{Z}$ and $n\geq0$. Multiplying the scheme \eqref{eq:nonlocal_lwr_num}-\eqref{eq:nonlocal_lwr_num_2} by $\phi_j^nh$, summing over all $j\in\mathbb{Z}$ and $n\geq0$, and applying summation by parts, we obtain:
\begin{align}
	h\tau \sum_{n\geq1} \sum_{j\in\mathbb{Z}} \frac{\phi_j^n-\phi_j^{n-1}}{\tau} \rho_j^n + h\tau \sum_{n\geq0} \sum_{j\in\mathbb{Z}} \frac{\phi_{j+1}^n-\phi_j^n}{h} g(\rho_j^n,\rho_{j+1}^n,q_j^n,q_{j+1}^n) + h \sum_{j\in\mathbb{Z}} \phi_j^0\rho_j^0 = 0. \label{eq:dis_weak_form}
\end{align}

When $h\to0$, given the assumptions on $\phi$,  it is straightforward to show that:
\begin{align}
h\sum_{j\in\mathbb{Z}}\phi_j^0\rho_j^0\to&\int_{\mathbb{R}}\rho_0(x)\phi(0,x)\,dx,\label{eq:conv_1}\\
h\tau\sum_{n\geq1}\sum_{j\in\mathbb{Z}}\frac{\phi_j^n-\phi_j^{n-1}}{\tau}\rho_j^n\to&\int_0^\infty\int_{\mathbb{R}}\rho^*(t,x)\partial_t\phi(t,x)\,dxdt.\label{eq:conv_2}
\end{align}
We need to show:
\begin{align*}
	h\tau \sum_{n\geq0} \sum_{j\in\mathbb{Z}} \frac{\phi_{j+1}^n-\phi_j^n}{h} g(\rho_j^n,\rho_{j+1}^n,q_j^n,q_{j+1}^n) \to \int_0^\infty\int_{\mathbb{R}} \rho^\ast(t,x)v\left(\rho^\ast(t,x)\right)\partial_x\phi(t,x) \,dxdt.
\end{align*}
Using the total variation estimate \eqref{eq:tv_estimate_ac}, we have
\begin{align}
&\sum_{j\in\mathbb{Z}} |g(\rho_j^n,\rho_{j+1}^n,q_j^n,q_{j+1}^n) - \rho_j v(q_j^n)| 
	= \sum_{j\in\mathbb{Z}} |g(\rho_j^n,\rho_{j+1}^n,q_j^n,q_{j+1}^n) - g(\rho_j^n,\rho_j^n,q_j^n,q_j^n)| \notag\\
& \qquad	\leq \norm{\nabla g}_\infty \sum_{j\in\mathbb{Z}} |\rho_{j+1}^n-\rho_j^n|+|q_{j+1}^n-q_j^n|
	\leq 2\norm{\nabla g}_\infty \mathrm{TV}(\rho_0), \label{eq:conv3}
\end{align}
and
\begin{align*}
& \sum_{j\in\mathbb{Z}} |\rho_j v(q_j^n) - \rho_j v(\rho_j^n)|
	\leq \sum_{j\in\mathbb{Z}} |q_j^n-\rho_j^n| 
	\leq \sum_{k=0}^{m-1} w_k \sum_{j\in\mathbb{Z}} |\rho_{j+k}^n-\rho_j^n| \\
&\qquad	\leq \left( \sum_{k=1}^{m-1} kw_k \right) \sum_{j\in\mathbb{Z}} |\rho_{j+1}^n-\rho_j^n| 
	\leq \frac{mw(0)}{2} \mathrm{TV}(\rho_0).
\end{align*}
Therefore we have that the difference between the summations 
\begin{align*}
& \left|
h\tau \sum_{n\geq0} \sum_{j\in\mathbb{Z}} \frac{\phi_{j+1}^n-\phi_j^n}{h} g(\rho_j^n,\rho_{j+1}^n,q_j^n,q_{j+1}^n)
- 
h\tau \sum_{n\geq0} \sum_{j\in\mathbb{Z}} \frac{\phi_{j+1}^n-\phi_j^n}{h} \rho_j^n v(\rho_j^n) \right|\\
&\quad \leq
	C(\phi) \left( 2\norm{\nabla g}_\infty \mathrm{TV}(\rho_0) h + \frac{w(0)}{2} \mathrm{TV}(\rho_0) \delta \right) \to 0, 
\end{align*}
as $\delta\to0,h\to0$, where $C(\phi)>0$ is a constant only depending on $\phi$.
Then we can pass the limit
\begin{align*}
h\tau \sum_{n\geq0} \sum_{j\in\mathbb{Z}} \frac{\phi_{j+1}^n-\phi_j^n}{h} \rho_j^n v(\rho_j^n)    
\to 
\int_0^\infty\int_{\mathbb{R}} \rho^\ast(t,x)v\left(\rho^\ast(t,x)\right)\partial_x\phi(t,x) \,dxdt
\end{align*}
as $h\to0$ by using the $\mathbf{L}^1_{\mathrm{loc}}$ convergence from $\rho^{\delta,h}$ to $\rho^*$, and the a priori $\mathbf{L}^\infty$ bound as given in \eqref{eq:numerical_maxm_principle}, and deduce that $\rho^*$ satisfies \eqref{eq:local_sol}.
	 
For the entropy condition, let us consider numerical solutions $\tilde{\rho}^{\delta,h}$ that are constructed by linear interpolation rather than the piecewise constant reconstruction as defined in \eqref{eq:sol_piece_const}. Then by Lemma~\ref{lem:lip_estimate_2}, $\tilde{\rho}^{\delta,h}$ satisfies the one-sided Lipschitz estimate:
\begin{align}\label{eq:dis_olenik}
	-\frac{\tilde{\rho}^{\delta,h}(t,y)-\tilde{\rho}^{\delta,h}(t,x)}{y-x}\leq \frac{1}{2t} \quad \forall x\neq y\in\mathbb{R},\,t>0.
\end{align}
Noting that  $\tilde{\rho}^{\delta,h}$ converges to the same limit function $\rho^{*}$ pointwise, we can show that $\rho^{*}$ satisfies the Oleinik's entropy condition \eqref{eq:oleinik} by passing the limit on \eqref{eq:dis_olenik}.
\end{proof}

To prove Theorem~\ref{thm:numerical_convergence}, we first prove the following lemma.
\begin{lemm}\label{lemm:numerical_convergence}
Under Assumptions~\ref{assm:1}-\ref{assm:5},
and that $\delta$ satisfies the condition \eqref{eq:delta_condition}.
When $\delta\to\delta_*>0$ and $h\to0$,
the numerical solution $\rho^{\delta,h}$ produced by the scheme \eqref{eq:nonlocal_lwr_num}-\eqref{eq:nonlocal_lwr_num_2} converges in $\mathbf{L}^1_{\mathrm{loc}}([0,+\infty)\times\mathbb{R})$ to the weak solution $\rho^{\delta_*}$ of the nonlocal LWR model \eqref{eq:nonlocal_lwr} as defined in Proposition~\ref{prop:nonlocal_sol}.
\end{lemm}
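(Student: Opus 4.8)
The plan is to mirror the proof of Theorem~\ref{thm:ac}, the only structural changes being that the limiting flux is the \emph{nonlocal} flux at horizon $\delta_*$ rather than the local one, and that uniqueness is supplied by Proposition~\ref{prop:nonlocal_sol} in place of the Oleinik entropy condition (so no entropy admissibility needs to be checked here). Since $\delta$ stays in $(0,\delta_0]$ along the whole limiting path, Theorem~\ref{thm:a_priori_estimates} furnishes uniform $\mathbf{L}^\infty$ and total variation bounds for the family $\{\rho^{\delta,h}\}$; hence the family is precompact in $\mathbf{L}^1_{\mathrm{loc}}([0,+\infty)\times\mathbb{R})$, and from any sequence $(\delta_l,h_l)\to(\delta_*,0)$ one may extract a subsequence with $\rho^{\delta_l,h_l}\to\rho^*$ in $\mathbf{L}^1_{\mathrm{loc}}$, where $0\le\rho^*\le1$ and $\rho^*\in\mathbf{BV}_{\mathrm{loc}}$. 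It then suffices to show that any such limit $\rho^*$ coincides with the weak solution $\rho^{\delta_*}$ of \eqref{eq:nonlocal_lwr}; the uniqueness in Proposition~\ref{prop:nonlocal_sol} then upgrades this to convergence of the full family.

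To identify $\rho^*$, I would pass to the limit in the discrete weak form \eqref{eq:dis_weak_form}. The time-derivative and initial-data terms converge exactly as in \eqref{eq:conv_1}--\eqref{eq:conv_2}, producing $\int_0^\infty\!\!\int_{\mathbb{R}}\rho^*\,\partial_t\phi\,dxdt$ and $\int_{\mathbb{R}}\rho_0\,\phi(0,\cdot)\,dx$. For the flux term, the consistency identity $g(\rho_L,\rho_L,q_L,q_L)=\rho_L v(q_L)$ together with the uniform total variation bound gives, exactly as in \eqref{eq:conv3}, that $\sum_j|g(\rho_j^n,\rho_{j+1}^n,q_j^n,q_{j+1}^n)-\rho_j^n v(q_j^n)|$ is bounded uniformly in $n$, so the corresponding contribution to the flux term is $O(h)$ after testing against $\frac{\phi_{j+1}^n-\phi_j^n}{h}$; the remaining task is to show that $h\tau\sum_{n\geq0}\sum_{j\in\mathbb{Z}}\frac{\phi_{j+1}^n-\phi_j^n}{h}\rho_j^n v(q_j^n)$ converges to $\int_0^\infty\!\!\int_{\mathbb{R}}\rho^*(t,x)\,v\big(q_{\delta_*}(\rho^*(t,\cdot),t,x)\big)\,\partial_x\phi\,dxdt$. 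Given the uniform $\mathbf{L}^\infty$ bound, the $\mathbf{L}^1_{\mathrm{loc}}$ convergence $\rho^{\delta_l,h_l}\to\rho^*$, and the Lipschitz continuity of $v$, this reduces to showing that the piecewise constant reconstruction of $\{q_j^n\}$ from \eqref{eq:discrete_nonlocal_density} converges in $\mathbf{L}^1_{\mathrm{loc}}$ to $q_{\delta_*}(\rho^*(t,\cdot),t,x)$.

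The core of the argument is thus the convergence of the discrete nonlocal density, which I would establish in three steps. Step (a): rewrite $q_j^n=\sum_k w_k\rho_{j+k}^n$ as a convolution of the step kernel $w_{\delta,h}$ against the piecewise constant numerical solution; the grid-offset mismatch between the cell grid and the quadrature cells introduces a node-wise error bounded by $\tfrac12\sum_k w_k|r_{j+k}^n|$, which, using $\sum_k w_k=1$ and the uniform total variation bound \eqref{eq:tv_estimate_ac}, is $O(h)$ in $\mathbf{L}^1([0,T]\times\mathbb{R})$. Step (b): replace $w_{\delta,h}$ first by the continuum kernel $w_\delta$ and then by $w_{\delta_*}$; here $\norm{w_{\delta,h}-w_{\delta_*}}_{\mathbf{L}^1(\mathbb{R})}\to0$ because Assumption~\ref{assm:3} pins each $w_k$ between $w_\delta(kh)h$ and $w_\delta((k+1)h)h$, $w$ is $\mathbf{C}^1$, and $\delta\to\delta_*>0$ keeps the factors $1/\delta$, $1/\delta^2$ bounded, so after multiplying by $\norm{\rho^*}_\infty$ this contributes $o(1)$. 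Step (c): estimate $\int_0^{\delta_*}\big(\rho^{\delta,h}(t,x+s)-\rho^*(t,x+s)\big)w_{\delta_*}(s)\,ds$ in $\mathbf{L}^1_{\mathrm{loc}}$ by $\norm{\rho^{\delta,h}-\rho^*}_{\mathbf{L}^1_{\mathrm{loc}}}$ via Fubini and $\int_0^{\delta_*}w_{\delta_*}=1$. I expect step (b), the uniform bookkeeping of the quadrature/kernel error while the horizon $\delta$ itself moves, to be the most delicate point, even though it is conceptually routine once $\delta_*>0$ is fixed. Combining (a)--(c) shows $\rho^*$ satisfies \eqref{eq:nonlocal_sol} at horizon $\delta_*$, hence $\rho^*=\rho^{\delta_*}$, completing the proof.
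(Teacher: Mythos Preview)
Your proposal is correct and follows essentially the same approach as the paper: extract a convergent subsequence via the uniform $\mathbf{L}^\infty$ and $\mathbf{BV}$ bounds of Theorem~\ref{thm:a_priori_estimates}, pass to the limit in the discrete weak form \eqref{eq:dis_weak_form}, and identify the limit via the uniqueness in Proposition~\ref{prop:nonlocal_sol}. The paper's proof is terser on the convergence of the discrete nonlocal density---it simply records the convergence \eqref{eq:conv4} and defers the details to \cite{Blandin2016}---whereas your three-step decomposition (a)--(c) spells this out explicitly, including the additional bookkeeping needed because $\delta$ is moving toward $\delta_*$ rather than being held fixed; this is a welcome clarification but not a different strategy.
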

\begin{proof}
Similarly as in the proof of Theorem~\ref{thm:ac}, when taking the limit $\delta\to\delta_*$ and $h\to0$, there exists a sequence $\{\rho^{\delta_l,h_l}\}$ converging to a limit function $\rho^{**}$ in the $\mathbf{L}^1_{\mathrm{loc}}$ norm with $\delta_l\to\delta_*, h_l\to0$. Noting that Proposition~\ref{prop:nonlocal_sol} already gives the solution uniqueness, we only need to show that the limit function $\rho^{**}$ satisfies the weak form \eqref{eq:nonlocal_sol}.

With similar calculations to those in the proof of Theorem~\ref{thm:ac}, we have \eqref{eq:dis_weak_form}-\eqref{eq:conv_2} for $\rho^{**}$. But here we only use \eqref{eq:conv3} and the convergence:
\begin{align}\label{eq:conv4}
	\sum_{j\in\mathbb{Z}}\sum_{n=0}^\infty q_j^n\mathbf{1}_{\mathcal{C}_j\times\mathcal{T}^n}(t,x) \to \int_0^\delta\rho^{\delta,h}(t,x+s)w_\delta(s)\,ds,
\end{align}
in the $\mathbf{L}^1_{\mathrm{loc}}$ norm. The proof of \eqref{eq:conv4} is similar to that given in \cite{Blandin2016}, we omit the details here.
Then we have
\begin{align*}
& h\tau\sum_{n\geq0}\sum_{j\in\mathbb{Z}}\frac{\phi_{j+1}^n-\phi_j^n}{h}g(\rho_j^n,\rho_{j+1}^n,q_j^n,q_{j+1}^n)\\
&\qquad \to\int_0^\infty\int_{\mathbb{R}}\rho^{**}(t,x) v\left(\int_0^\delta\rho^{**}(t,x+s)w_\delta(s)\,ds\right)\partial_x\phi(t,x)\,dxdt,
\end{align*}
which implies that $\rho^{**}$ satisfies \eqref{eq:nonlocal_sol}.
\end{proof}

We now give the proof of Theorem~\ref{thm:numerical_convergence}.

\begin{proof}[Proof of Theorem~\ref{thm:numerical_convergence}]

For any bounded set $U\subset[0,+\infty)\times\mathbb{R}$, suppose \eqref{eq:uniform_numerical_convergence} is not true, there exists a sequence of $\delta_l$ and $h_l$ where $\delta_l\in(0,\delta_0]$ and $h_l\to0$ as $l\to\infty$, and $\varepsilon>0$, such that
	\begin{align*}
		\norm{\rho^{\delta_l,h_l} - \rho^{\delta_l}}_{\mathbf{L}^1(U)} \geq \varepsilon.
	\end{align*}
	By possibly selecting a subsequence we suppose $\delta_l\to \delta_*\in[0,\delta_0]$.
	If $\delta_l\to0$, both $\rho^{\delta_l,h_l}$ and $\rho^{\delta_l}$ converge to $\rho^0$; If $ \delta_l\to \delta_*>0$, by Lemma~\ref{lemm:numerical_convergence}, $\rho^{\delta_l,h_l}\to\rho^{\delta_*}$, and by applying the same arguments on continuum solutions, it holds that $\rho^{\delta_l}\to\rho^{\delta_*}$. In either case there is a contradiction.
	
\end{proof}

\subsection{Local limit of numerical discretizations}\label{sec:local_limit_num}

We now present the proof of Theorem~\ref{thm:ap}.

\begin{proof}[Proof of Theorem~\ref{thm:ap}]
	For any bounded set $U\subset[0,+\infty)\times\mathbb{R}$, suppose \eqref{eq:ap_conv} is not true, there exists a sequence of $\delta_l$ and $h_l$ where $h_l\in(0,h_0]$ and $\delta_l\to0$ as $l\to\infty$, and $\varepsilon>0$, such that
	\begin{align*}
		\norm{\rho^{\delta_l,h_l} - \rho^{0,h_l}}_{\mathbf{L}^1(U)} \geq \varepsilon.
	\end{align*}
	By possibly selecting a subsequence we suppose $h_l\to h_*\in[0,h_0]$.
	If $h_l\to0$, both $\rho^{\delta_l,h_l}$ and $\rho^{0,h_l}$ converge to $\rho^0$; If $ h_l\to h_*>0$, by Proposition~\ref{prop:scheme_local_limit} it holds that $\rho^{\delta_l,h_l}=\rho^{0,h_l}$ when $l$ is large enough. In either case there is a contradiction.
\end{proof}

\section{Numerical experiments}\label{sec:numerical_experiments}

In this section, we test the presented numerical scheme \eqref{eq:nonlocal_lwr_num}-\eqref{eq:nonlocal_lwr_num_2} in several numerical experiments to demonstrate the established results.
In the implementation of the scheme \eqref{eq:nonlocal_lwr_num}-\eqref{eq:nonlocal_lwr_num_2},
the numerical flux function $g$ is chosen from the ones given in \eqref{eq:g_func_LxF}-\eqref{eq:g_func_LxF_new},
and the numerical quadrature weights $\{w_k\}_{0\leq k\leq m-1}$ are chosen from the ones given in \eqref{eq:quad_weight_left}-\eqref{eq:quad_weight_left_norm}-\eqref{eq:quad_weight_exact}.
We fix the CFL ratio $\lambda=0.25$. For the Lax-Friedrichs type numerical flux functions \eqref{eq:g_func_LxF} and \eqref{eq:g_func_LxF_new}, we fix the numerical viscosity constant $\alpha=2$.
In all but the final experiments, we use the linear decreasing kernel $w_\delta(s) = \frac{2}{\delta^2}(\delta - s)$.
Assuming $\delta=mh$ where $m$ is a positive integer, the numerical quadrature weights for the linear decreasing kernel computed from \eqref{eq:quad_weight_left}-\eqref{eq:quad_weight_left_norm}-\eqref{eq:quad_weight_exact} are given respectively by
\begin{itemize}
	\item (Left endpoint) $w_k = \frac{2(m-k)}{m^2}$ for $0 \leq k \leq m-1$, with $\sum_{k=0}^{m-1} w_k = 1 + \frac1m$;
     \item (Normalized left endpoint) $w_k = \frac{2(m-k)}{m(m+1)}$ for $0 \leq k \leq m-1$, with $\sum_{k=0}^{m-1} w_k = 1$;
     \item (Exact quadrature) $w_k = \frac{2(m-k)-1}{m^2}$ for $0 \leq k \leq m-1$, with $\sum_{k=0}^{m-1} w_k = 1$.
\end{itemize}

The velocity function is chosen to be $v(\rho)=1-\rho$. Two sets of initial data $\rho_0$ are used, one is a bell-shaped curve:
\begin{align}\label{eq:ini_bellshape}
	\rho_0(x) = 0.4 + 0.4 \exp\left(-100(x-0.5)^2\right), \quad x\in\mathbb{R},
\end{align}
while the other represents the Riemann data:
\begin{align}\label{eq:ini_riemann}
	\rho_0(x) = \begin{dcases}
		\rho_L, \quad x < 0.5 \\
		\rho_R, \quad x > 0.5
	\end{dcases} , \quad x\in\mathbb{R},
\end{align}
we take $\rho_L=0.1$ and $\rho_R=0.6$ in all the experiments.
The numerical solutions are presented on the spatial domain $x\in[0,1]$ and in the time horizon $t\in[0,1]$ even though
the numerical computations are done on a larger spatial domain with the constant extension on both sides.
In the first three experiments, we examine the asymptotically compatibility and uniform numerical convergence of the scheme \eqref{eq:nonlocal_lwr_num}-\eqref{eq:nonlocal_lwr_num_2} with different numerical quadrature weights.
In the last experiment, we test the scheme with different choices of the nonlocal kernel.

{\bf Experiment 1.} 
We first present numerical solutions $\rho^{\delta,h}$ computed with the Lax-Friedrichs numerical flux function \eqref{eq:g_func_LxF} and different numerical quadrature weights.
For each initial data and each set of numerical quadrature weights, we compute the numerical solution $\rho^{\delta,h}$ with $\delta=0.005, h=0.001$ and plot its snapshots at selected times $t=0,0.5,1$.
Moreover, the snapshot of the numerical solution $\rho^{\delta,h}$ at time $t=1$ is compared with that of the solution $\rho^0$ of the local model \eqref{eq:lwr}.
In this experiment, the local solution $\rho^0$ is also computed numerically because the analytical solution is not always available. The numerical computation is done on a fine grid with $h=0.0002$ using a Lax-Friedrichs  scheme for \eqref{eq:lwr} with the numerical flux function
\begin{align}\label{eq:local_LxF}
	g_{\mathrm{local}}(\rho_L,\rho_R) = \frac12 (\rho_L v(\rho_L) + \rho_R v(\rho_R)) + \frac\alpha2 (\rho_L - \rho_R),
\end{align}
that is the local counterpart of \eqref{eq:g_func_LxF}.
The snapshot of the local solution $\rho^0$ at time $t=1$ is plotted with dashed line.
See Figure~\ref{fig:exp_1}.

\begin{figure}[htbp!]
\centering
	\begin{subfigure}{.32\textwidth}
	\includegraphics[width=\textwidth]{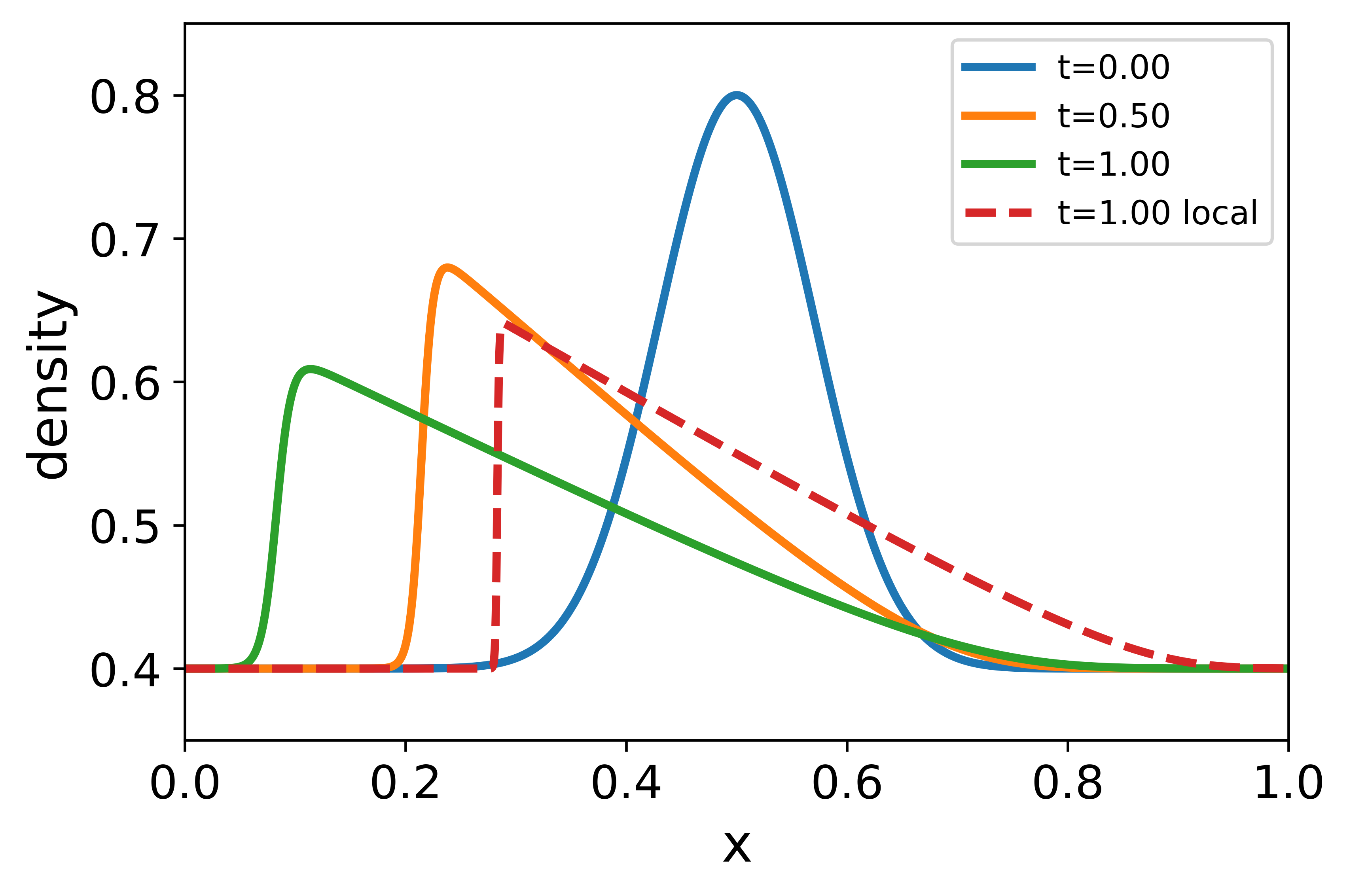}
	\end{subfigure}
	\begin{subfigure}{.32\textwidth}
	\includegraphics[width=\textwidth]{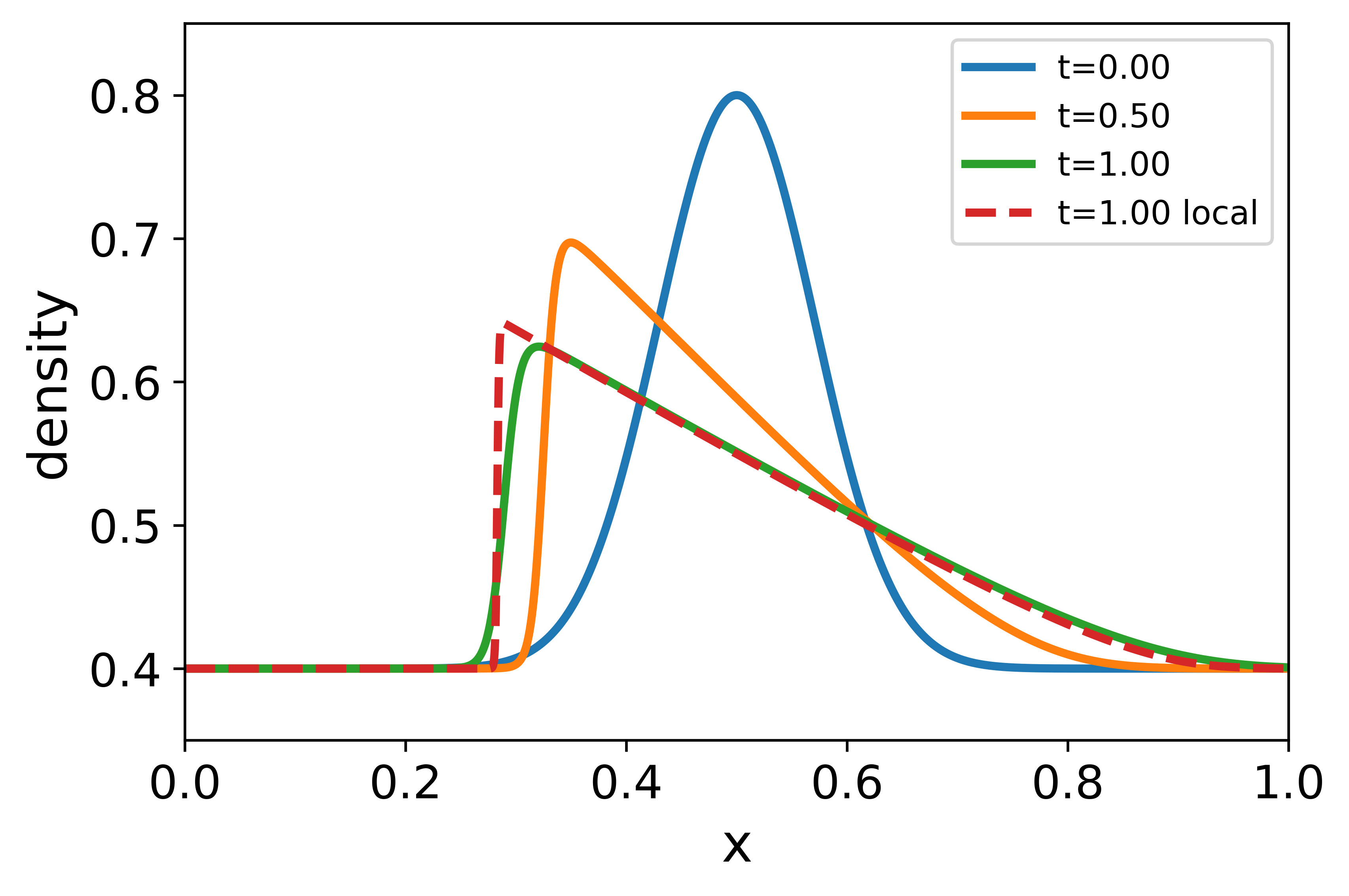}
	\end{subfigure}
	\begin{subfigure}{.32\textwidth}
	\includegraphics[width=\textwidth]{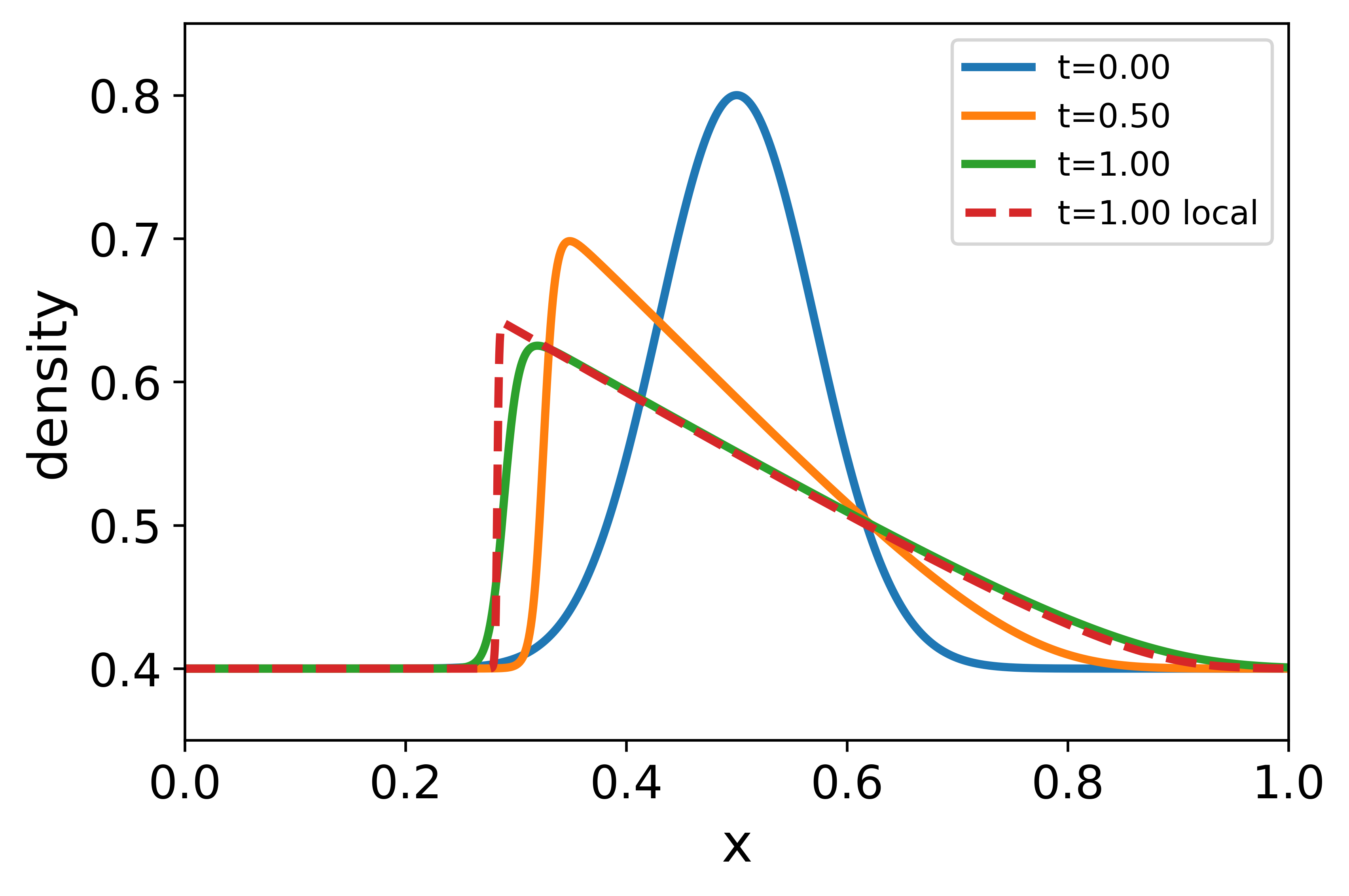}
	\end{subfigure}
	\begin{subfigure}{.32\textwidth}
	\includegraphics[width=\textwidth]{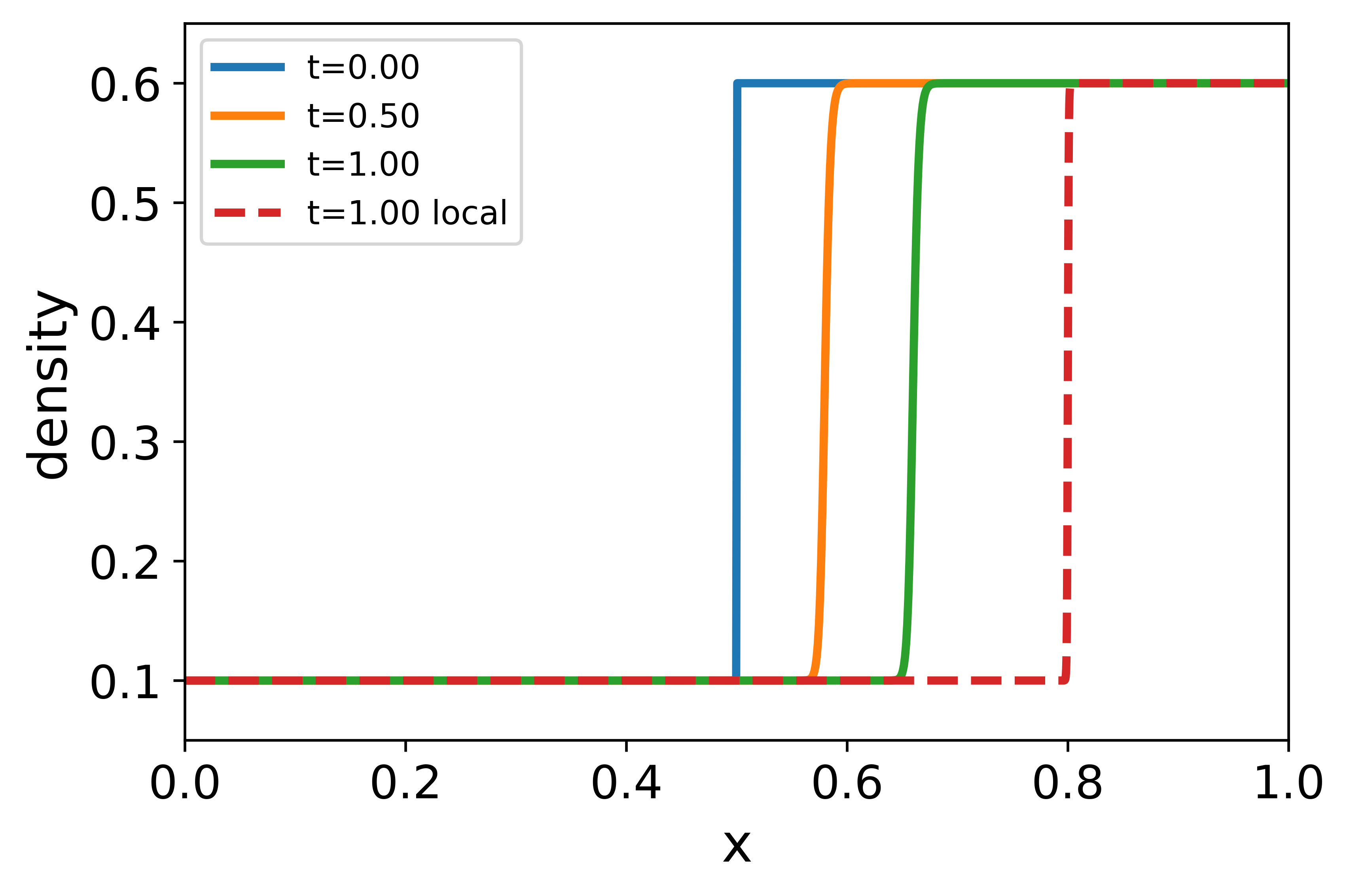}
	\end{subfigure}
	\begin{subfigure}{.32\textwidth}
	\includegraphics[width=\textwidth]{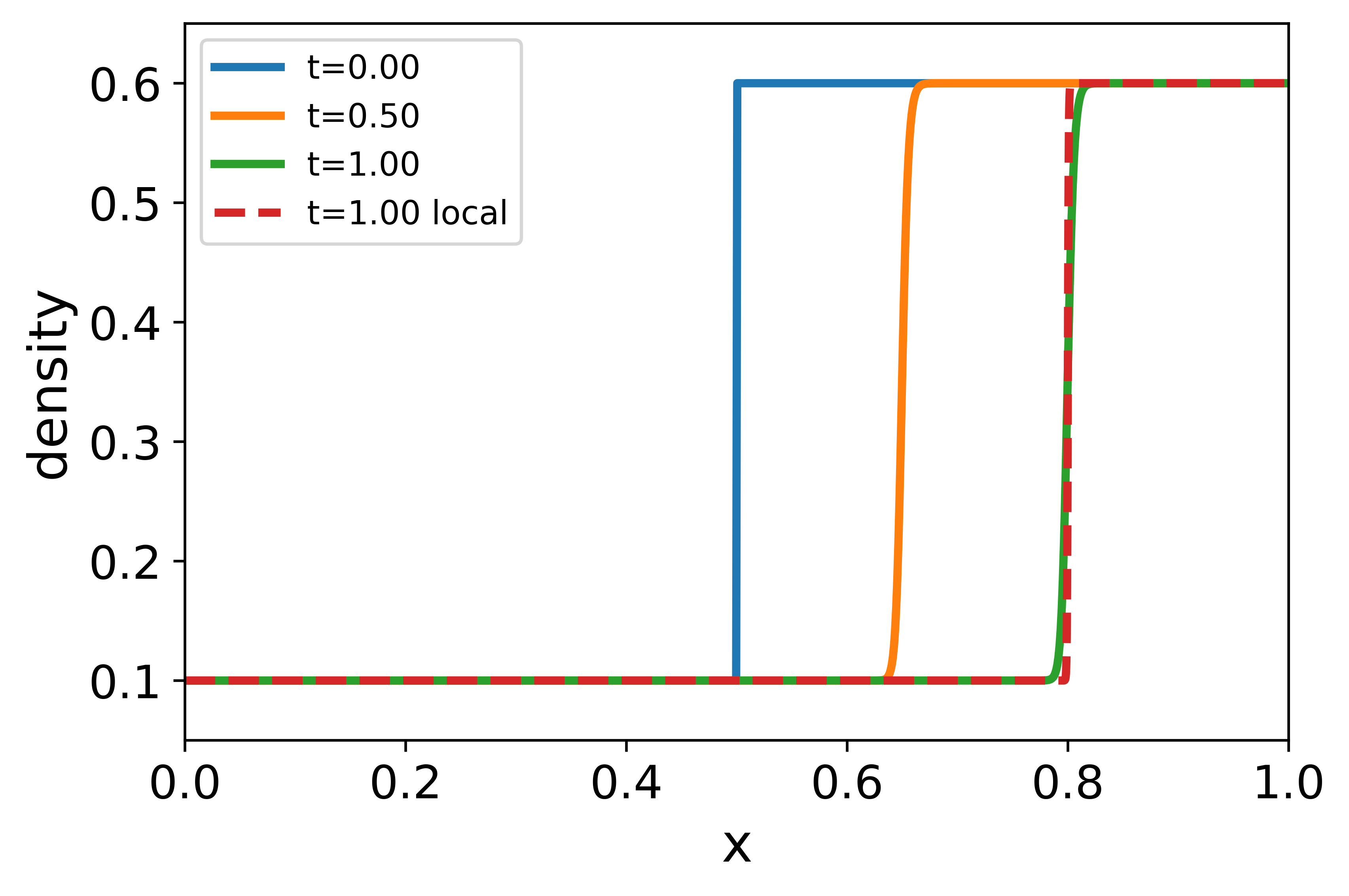}
	\end{subfigure}
	\begin{subfigure}{.32\textwidth}
	\includegraphics[width=\textwidth]{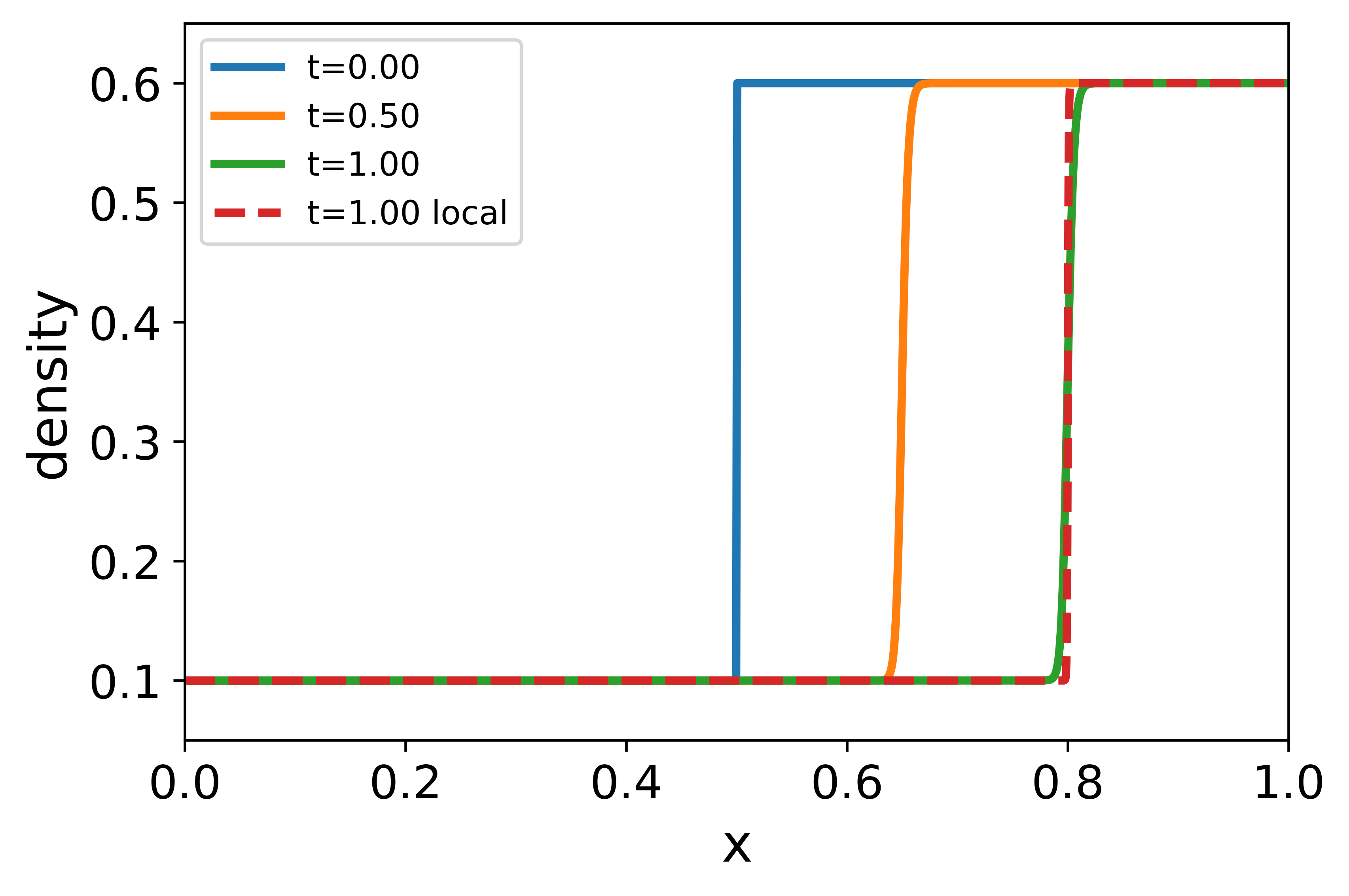}
	\end{subfigure}
	\caption{Experiment 1: Snapshots of computed solutions for the bell-shaped initial data (\emph{top}) and the Riemann initial data (\emph{bottom}) corresponding to the left endpoint quadrature weights (\emph{left}), the normalized left endpoint quadrature weights (\emph{middle}), and the exact quadrature weights (\emph{right}).}
	\label{fig:exp_1}
\end{figure}
For the bell-shaped initial data, we observe from the top row of Figure~\ref{fig:exp_1} that the numerical solutions of the nonlocal model preserve the smoothness of the initial data while the local solution develops a shock. At time $t=1$, the numerical solutions of the nonlocal model computed with the normalized left endpoint quadrature weights and the exact quadrature weights are close to the local solution, especially in the region away from the shock of the local solution.
This means that the numerical solution $\rho^{\delta,h}$ with both $\delta,h$ small provides a good approximation to the local solution $\rho^0$, which validates the conclusion of Theorem~\ref{thm:ac}.
We also observe from the top left figure of Figure~\ref{fig:exp_1} that the numerical solution of the nonlocal model computed with the left endpoint quadrature weights is very different from the local solution at time $t=1$. Although the numerical solution of the nonlocal model still approximates a shock profile at time $t=1$, the shock position is incorrect.
The comparison between the three sets of numerical quadrature weights emphasizes the significance of the normalization condition \eqref{eq:normalization_condition} for numerical quadrature weights.

For the Riemann initial data, the local solution $\rho^0$ is a traveling wave moving at the constant speed $v=1-(\rho_L+\rho_R)=0.3$. 
We observe from the bottom row of Figure~\ref{fig:exp_1} that the numerical solutions of the nonlocal model computed with the normalized left endpoint quadrature weights and the exact quadrature weights are close to the local solution at time $t=1$.
Meanwhile, in contrast to the discontinuity of the local solution, the nonlocal solutions get smoothed because of the nonlocal effects.
We also observe from the bottom left figure of Figure~\ref{fig:exp_1} that the numerical solution of the nonlocal model computed with the left endpoint quadrature weights is very different from the local solution at time $t=1$. While the former still approximates a Riemann data at time $t=1$, the position of the jump from $\rho_L=0.1$ to $\rho_R=0.6$ is incorrect.
The comparison again emphasizes the significance of the normalization condition \eqref{eq:normalization_condition} for numerical quadrature weights.

{\bf Experiment 2.} We next check the asymptotically compatibility of the scheme \eqref{eq:nonlocal_lwr_num}-\eqref{eq:nonlocal_lwr_num_2} by plotting $\norm{\rho^{\delta,h} - \rho^0}_{\mathbf{L}^1}$ with $\delta \propto h \to 0$.
We take $\delta=mh$ where $m=1,2,5$ and $h=0.01\times 2^{-l}$ for $l=0,1,2,3$, and compute numerical solutions $\rho^{\delta,h}$ using the Lax-Friedrichs numerical flux function \eqref{eq:g_func_LxF} and different numerical quadrature weights.
The local solution $\rho^0$ is numerically solved on a fine grid with $h=0.01\times 2^{-5}$ using a Lax-Friedrichs scheme for \eqref{eq:lwr} with the numerical flux function \eqref{eq:local_LxF}.
For each initial data and each set of numerical quadrature weights, we compute the $\mathbf{L}^1$ error $\norm{\rho^{\delta,h} - \rho^0}_{\mathbf{L}^1}$ with an interpolation of $\rho^{\delta,h}$ onto the fine grid on which $\rho^0$ is computed, and plot $\norm{\rho^{\delta,h} - \rho^0}_{\mathbf{L}^1}$ against $h^{-1}$ in the log-log scale for $\delta=h$, $\delta=2h$, and $\delta=5h$ in different colors.
We also plot a dashed line with the slope $-1$ to represent the linear convergence rate.
See the results in Figure~\ref{fig:exp_2}.

\begin{figure}[htbp]
\centering
	\begin{subfigure}{.32\textwidth}
	\includegraphics[width=\textwidth]{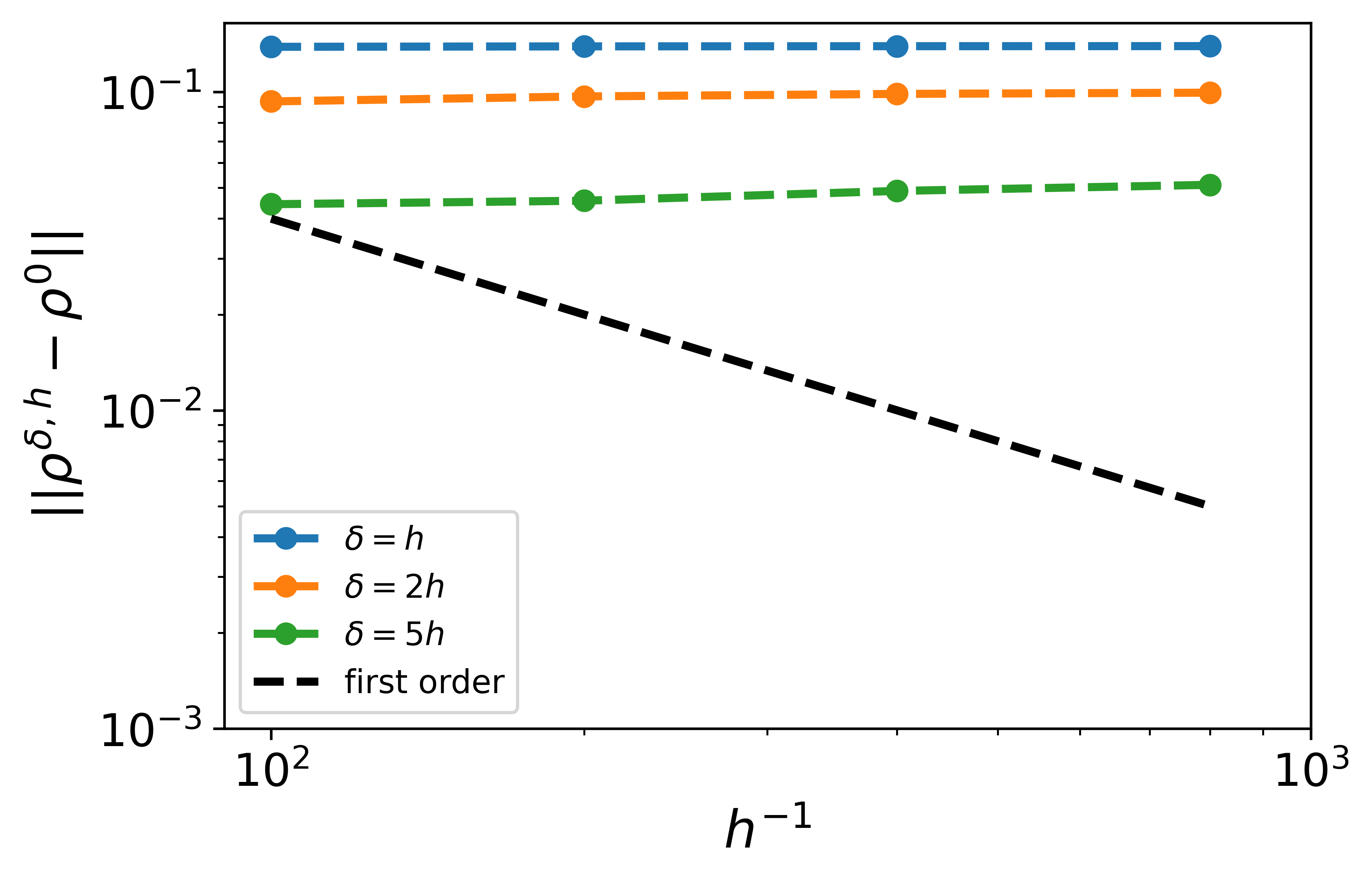}
	\end{subfigure}
	\begin{subfigure}{.32\textwidth}
	\includegraphics[width=\textwidth]{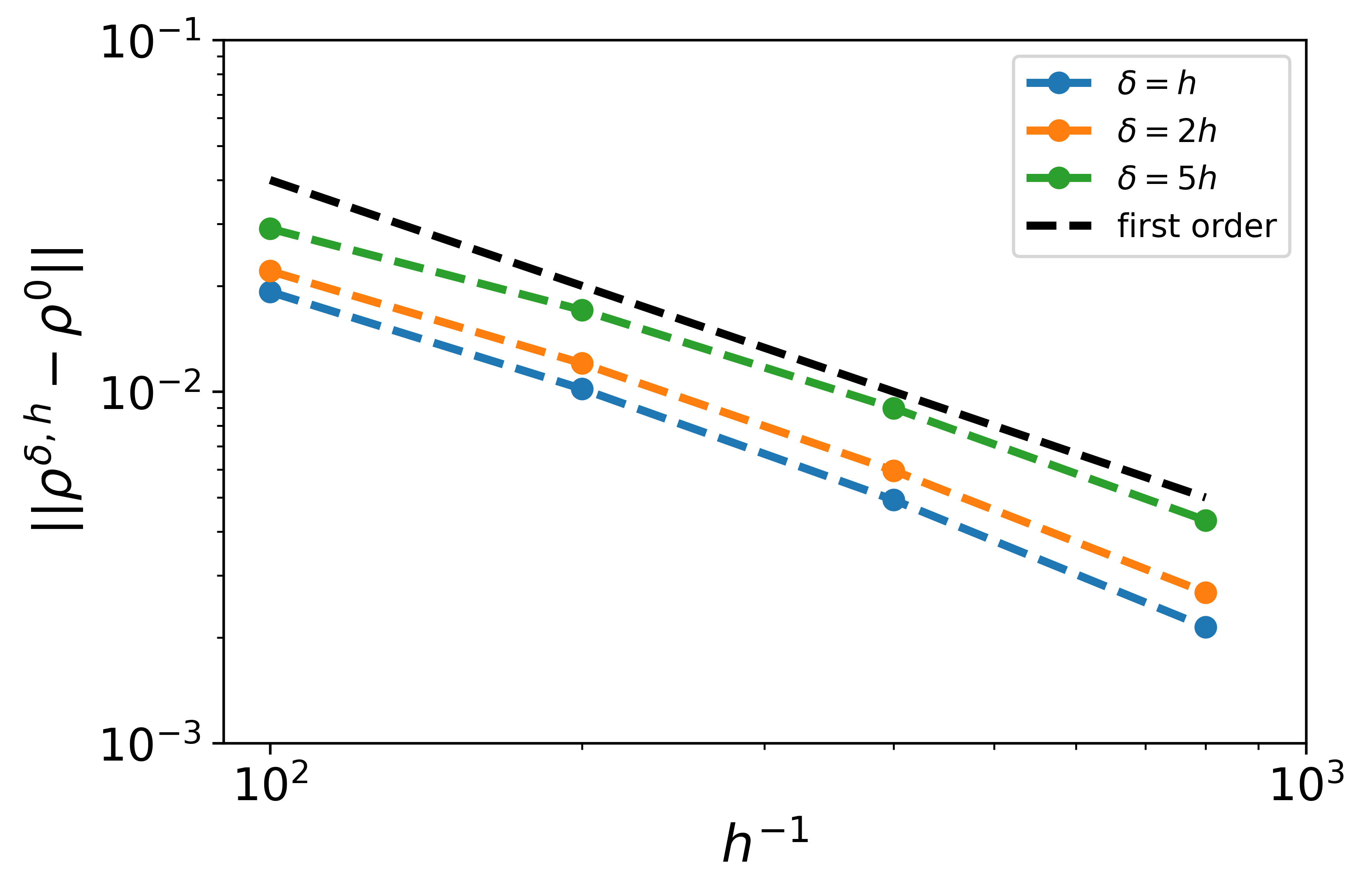}
	\end{subfigure}
	\begin{subfigure}{.32\textwidth}
	\includegraphics[width=\textwidth]{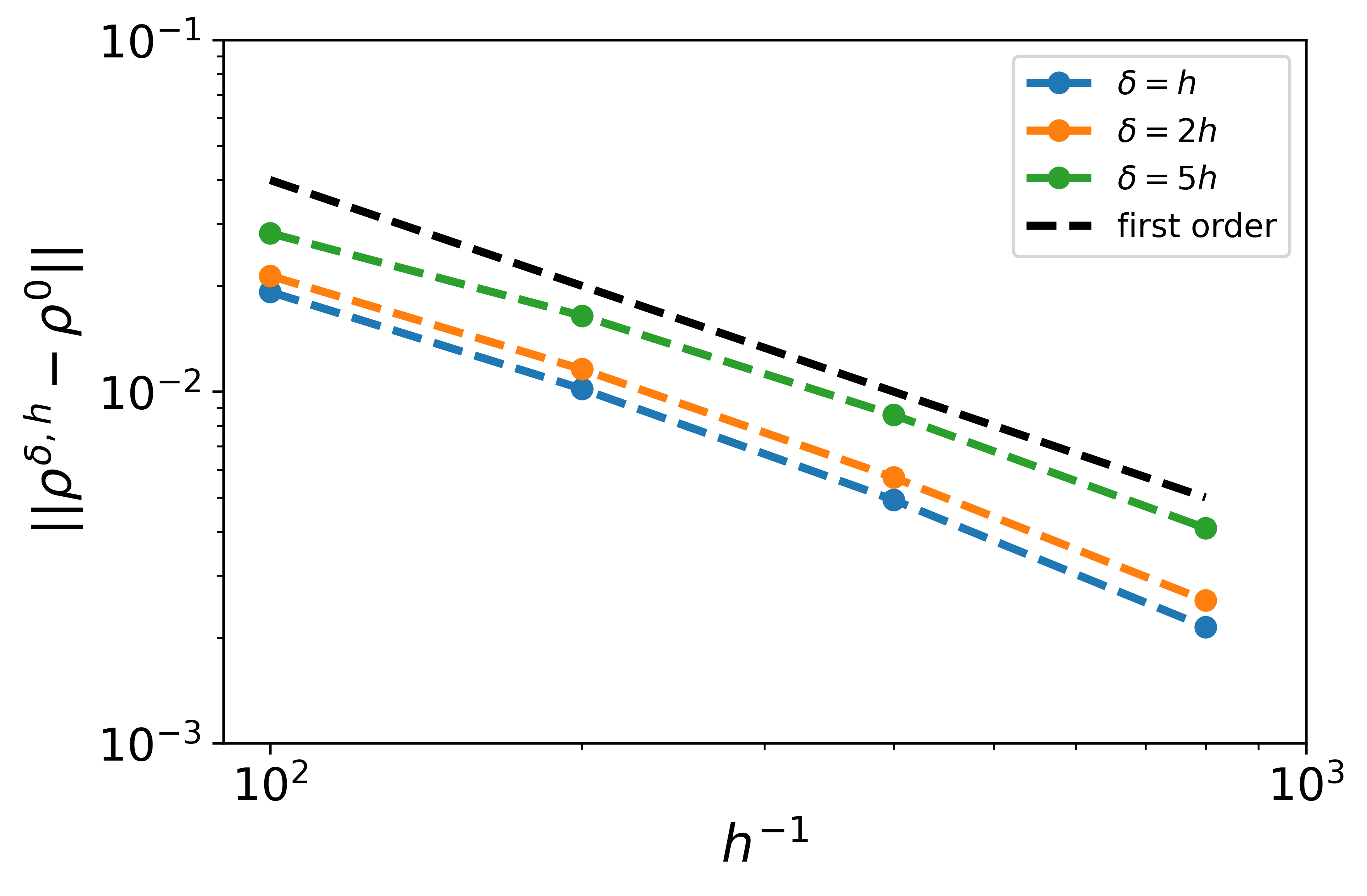}
	\end{subfigure}
	\begin{subfigure}{.32\textwidth}
	\includegraphics[width=\textwidth]{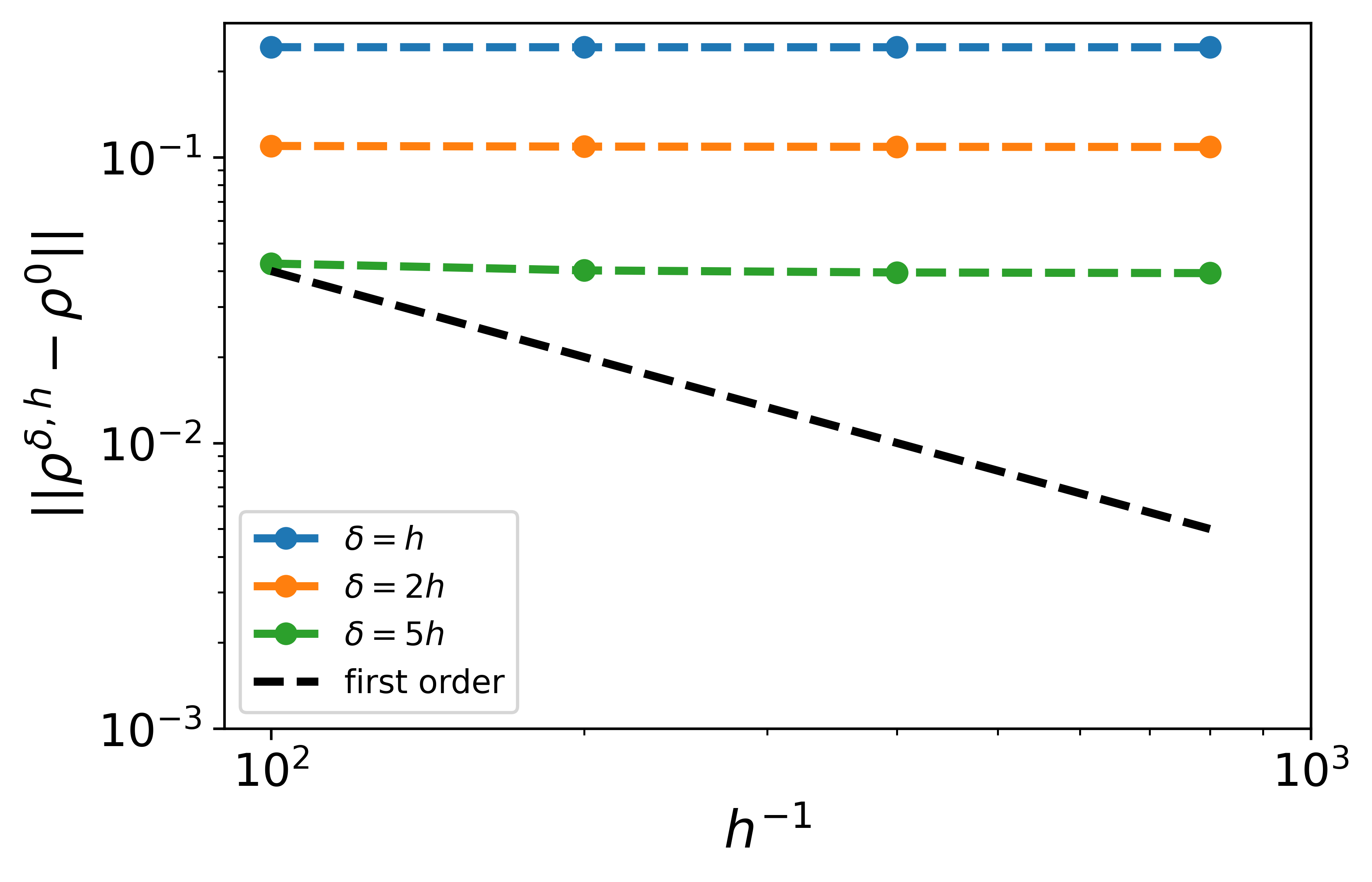}
	\end{subfigure}
	\begin{subfigure}{.32\textwidth}
	\includegraphics[width=\textwidth]{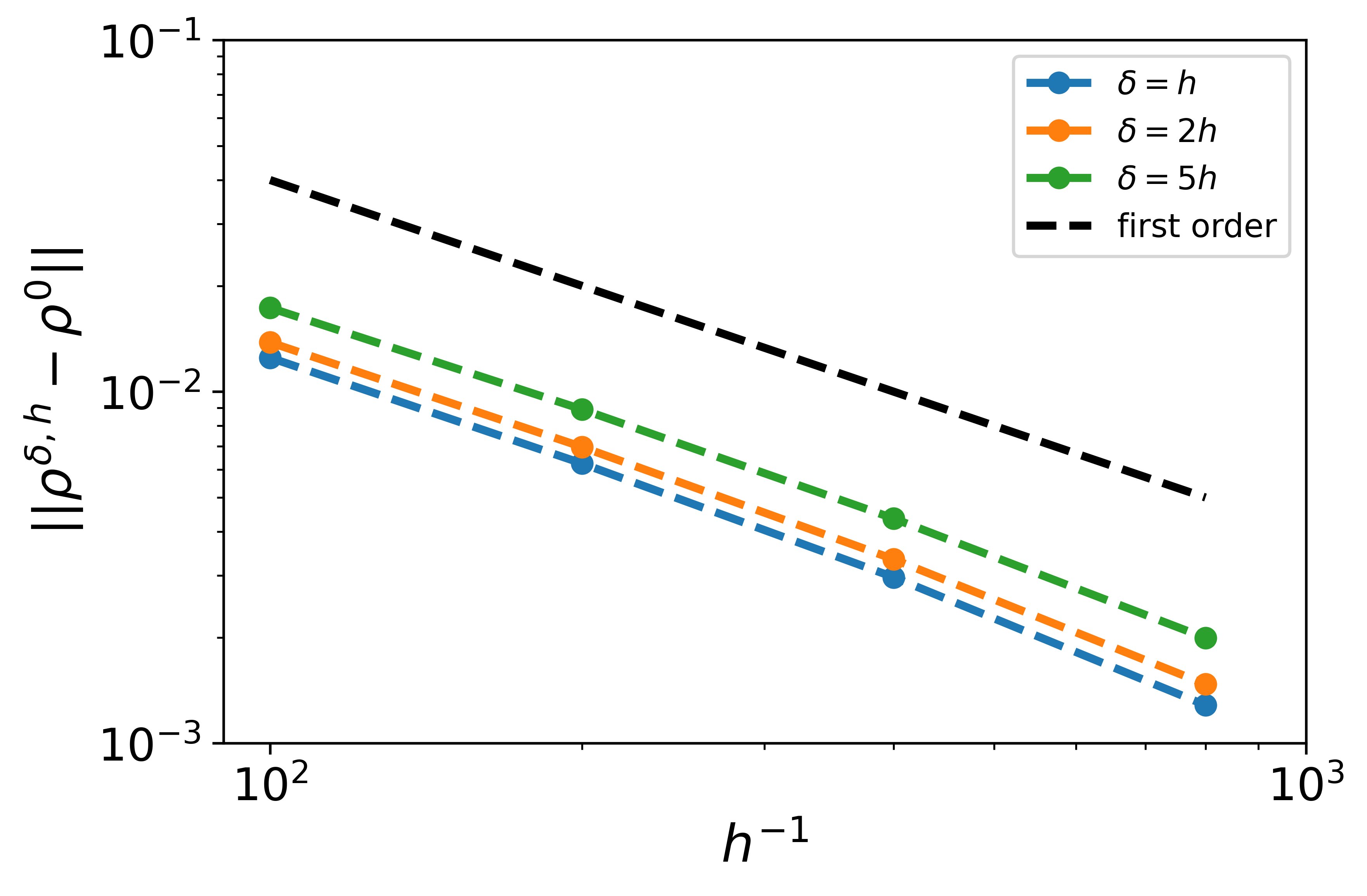}
	\end{subfigure}
	\begin{subfigure}{.32\textwidth}
	\includegraphics[width=\textwidth]{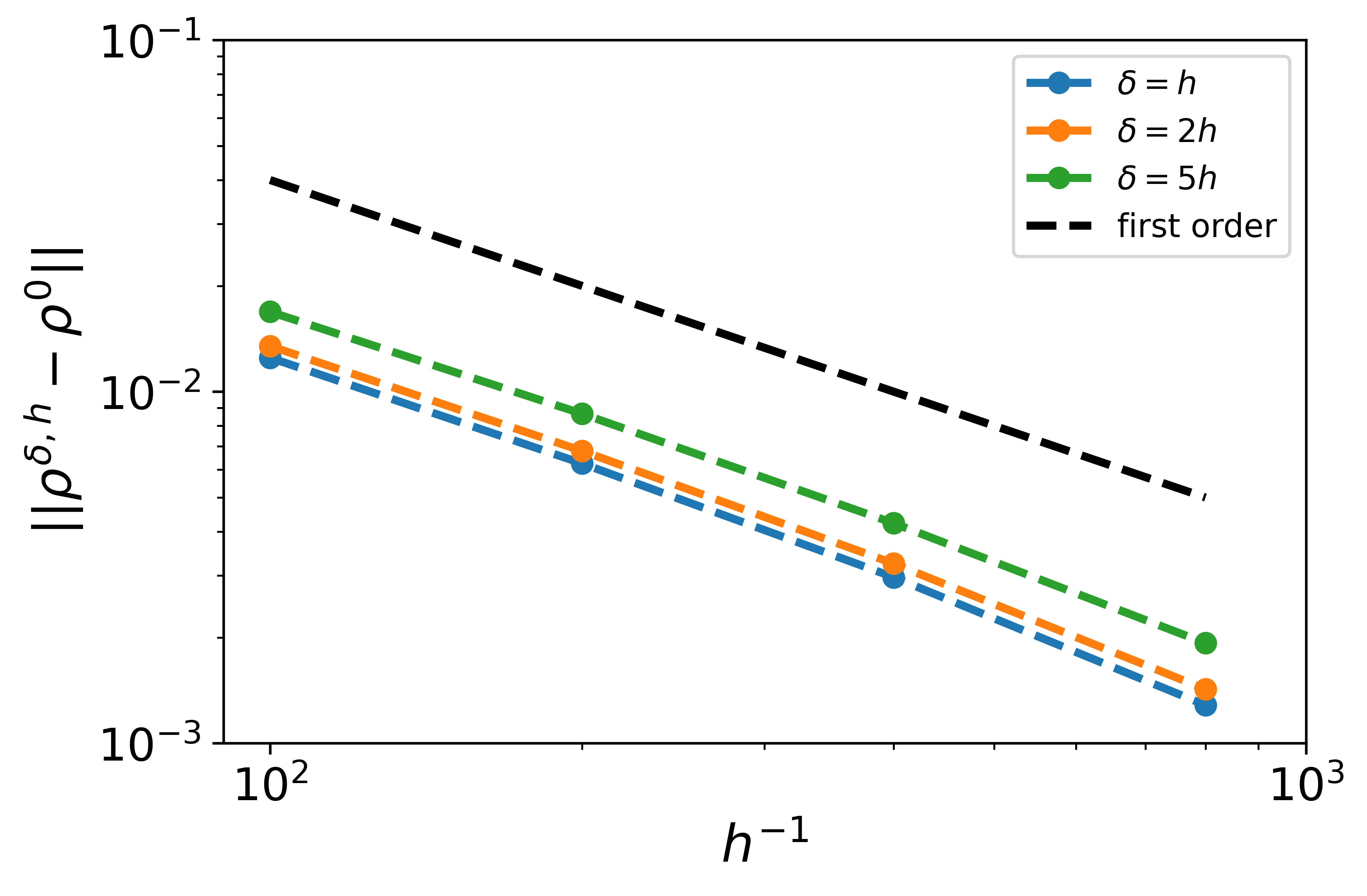}
	\end{subfigure}
	\caption{Experiment 2: Convergence from $\rho^{\delta,h}$ to $\rho^0$ for the bell-shaped initial data (\emph{top}) and the Riemann initial data (\emph{bottom}) corresponding to the left endpoint quadrature weights (\emph{left}), the normalized left endpoint quadrature weights (\emph{middle}), and the exact quadrature weights (\emph{right}).}
	\label{fig:exp_2}
\end{figure}

We observe from Figure~\ref{fig:exp_2} that: for the normalized left endpoint quadrature weights and the exact quadrature weights, the error $\norm{\rho^{\delta,h} - \rho^0}_{\mathbf{L}^1}$ has a linear decay rate with respect to $h$ for both initial data and $\delta=mh$ for $m=1,2,5$. This means that $\rho^{\delta,h}$ converges to $\rho^0$ along the limiting paths $\delta=mh\to0$ for $m=1,2,5$, which validates the conclusion of Theorem~\ref{thm:ac}.
Moreover, the numerical results show that the convergence is of first order with the particular choices of the initial data and the limiting paths.
In contrast, for the left endpoint numerical quadrature weights, the error $\norm{\rho^{\delta,h} - \rho^0}_{\mathbf{L}^1}$ stagnates on the scale of $10^{-1}$ for both initial data and $\delta=mh$ for $m=1,2,5$.
This is due to the convergence of $\rho^{\delta,h}$ to an incorrect solution when $\delta=mh\to 0$, further highlighting the importance of asymptotically compatibility via
 the normalization condition \eqref{eq:normalization_condition}.

{\bf Experiment 3.}  We now check the uniform convergence of the scheme \eqref{eq:nonlocal_lwr_num}-\eqref{eq:nonlocal_lwr_num_2} with respect to $\delta$ by plotting $\norm{\rho^{\delta,h} - \rho^\delta}_{\mathbf{L}^1}$ with $h \to 0$ for different choices of $\delta$.
We take $\delta=0.01\times 2^{-l}$ for $l=0,1,2$ and $h=0.01\times 2^{-l}$ for $l=0,1,2,3$, and compute numerical solutions $\rho^{\delta,h}$ using the Lax-Friedrichs numerical flux function \eqref{eq:g_func_LxF} and different numerical quadrature weights.
The reference solution $\rho^\delta$ is numerically solved on a fine grid with $h=0.01\times 2^{-5}$ using the same scheme.
For each initial data and each set of numerical quadrature weights, we compute the $\mathbf{L}^1$ error $\norm{\rho^{\delta,h} - \rho^\delta}_{\mathbf{L}^1}$ with an interpolation of $\rho^{\delta,h}$ onto the fine grid, and plot $\norm{\rho^{\delta,h} - \rho^\delta}_{\mathbf{L}^1}$ with respect to $h^{-1}$ in the log-log scale for $\delta=0.01$, $\delta=0.005$, and $\delta=0.0025$ in different colors.
A dashed line with the slope $-1$ is again provided.
See the results in Figure~\ref{fig:exp_3}.

\begin{figure}[htbp]
\centering
	\begin{subfigure}{.32\textwidth}
	\includegraphics[width=\textwidth]{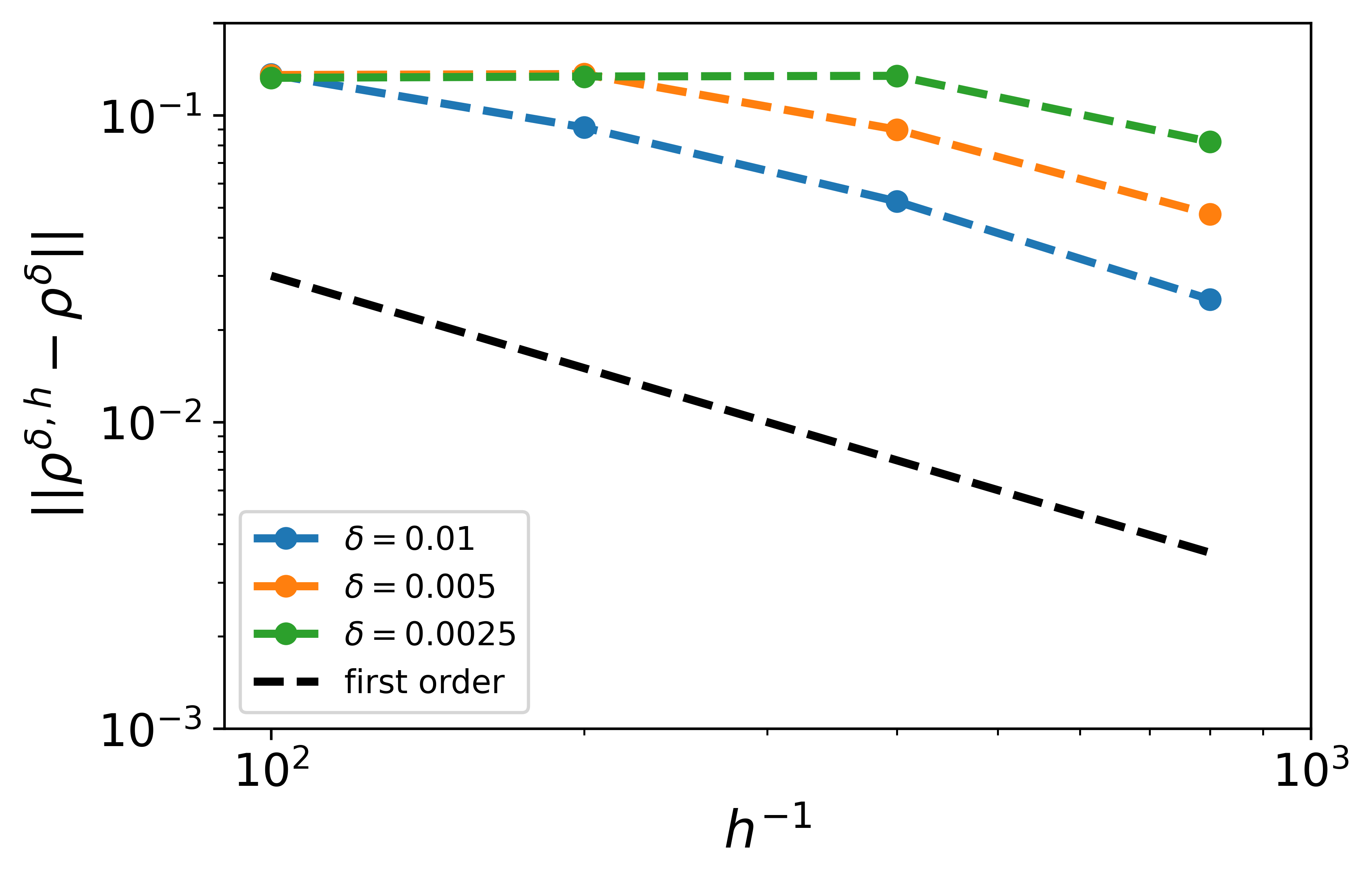}
	\end{subfigure}
	\begin{subfigure}{.32\textwidth}
	\includegraphics[width=\textwidth]{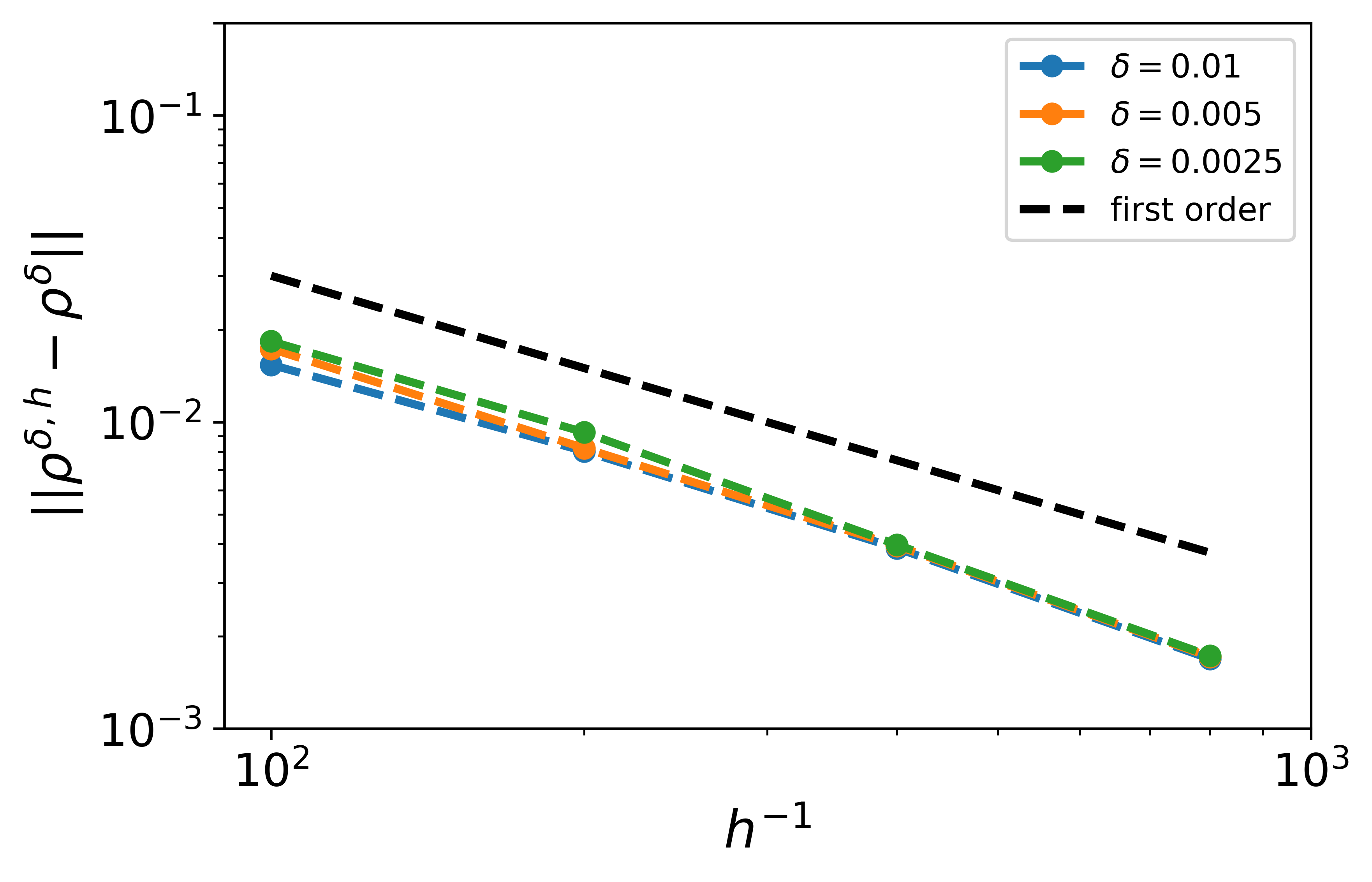}
	\end{subfigure}
	\begin{subfigure}{.32\textwidth}
	\includegraphics[width=\textwidth]{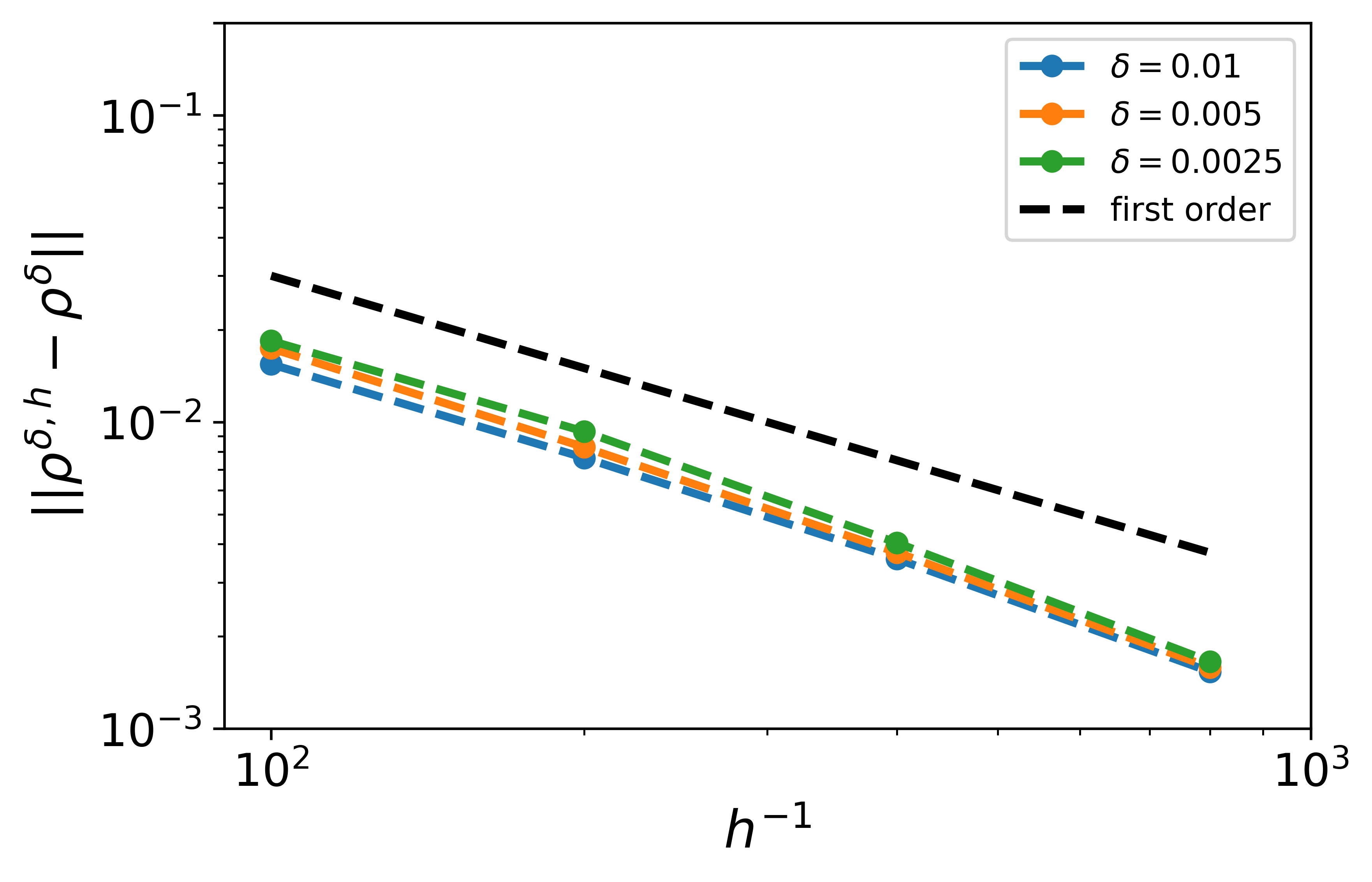}
	\end{subfigure}
	\begin{subfigure}{.32\textwidth}
	\includegraphics[width=\textwidth]{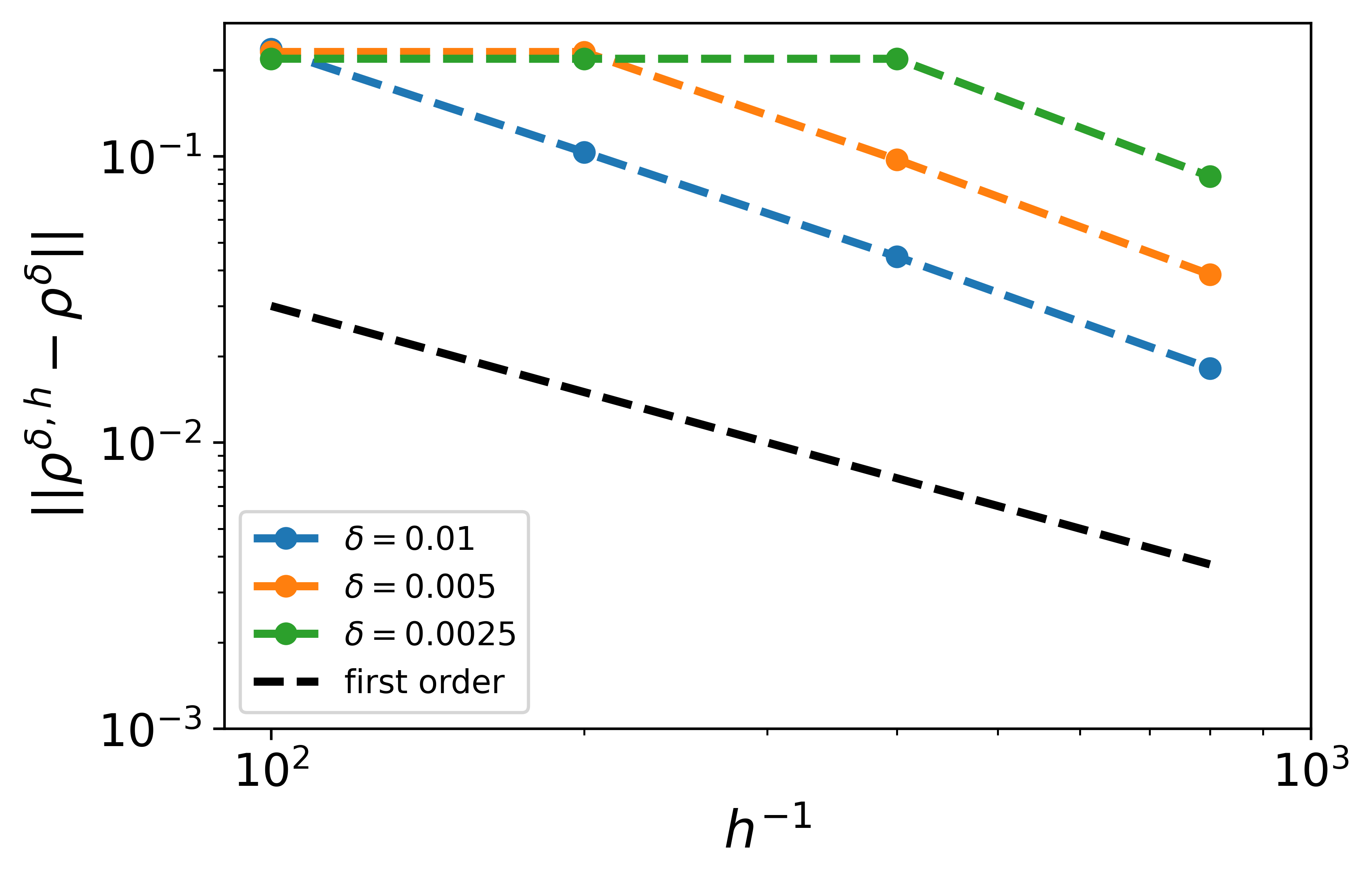}
	\end{subfigure}
	\begin{subfigure}{.32\textwidth}
	\includegraphics[width=\textwidth]{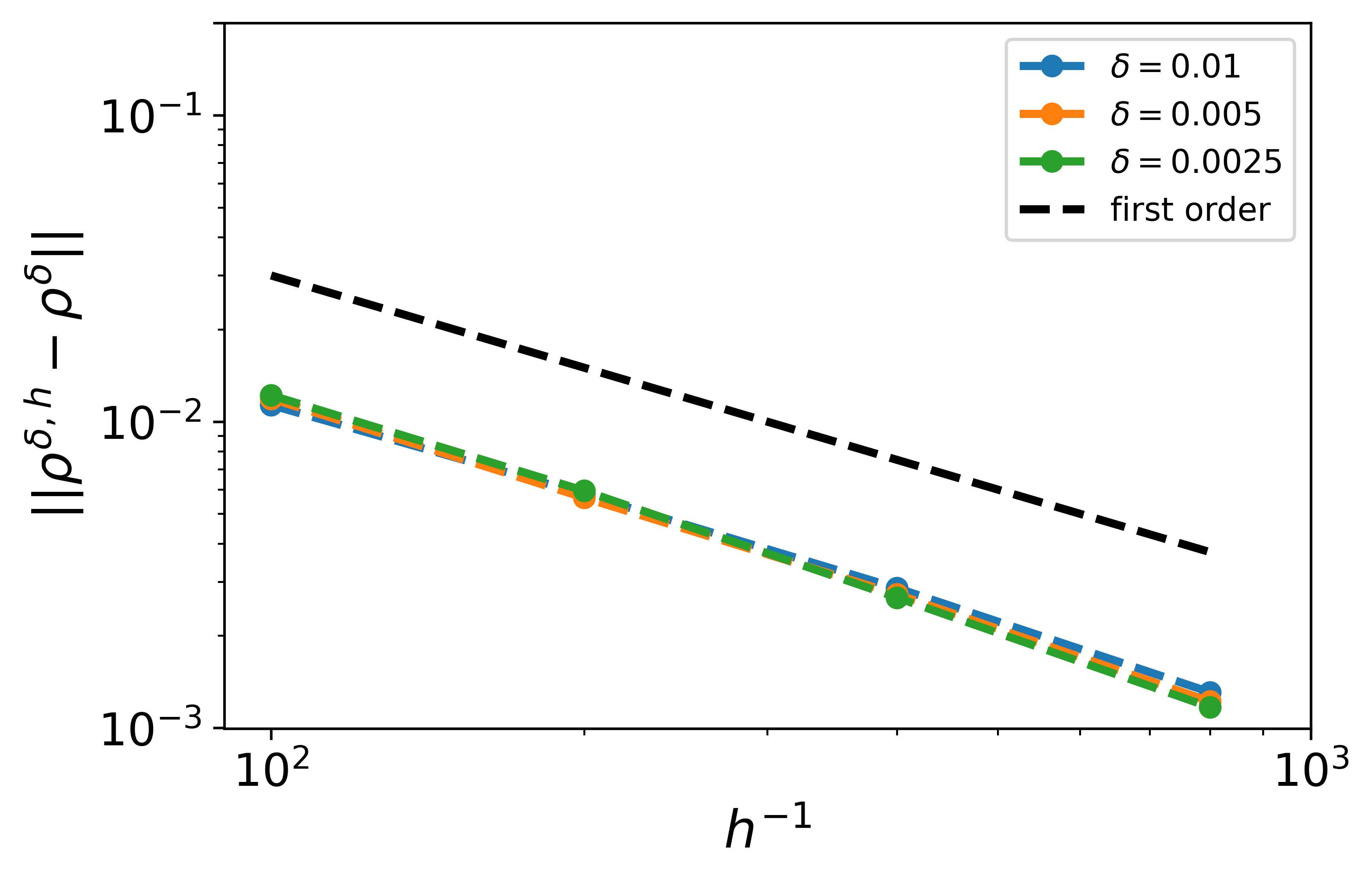}
	\end{subfigure}
	\begin{subfigure}{.32\textwidth}
	\includegraphics[width=\textwidth]{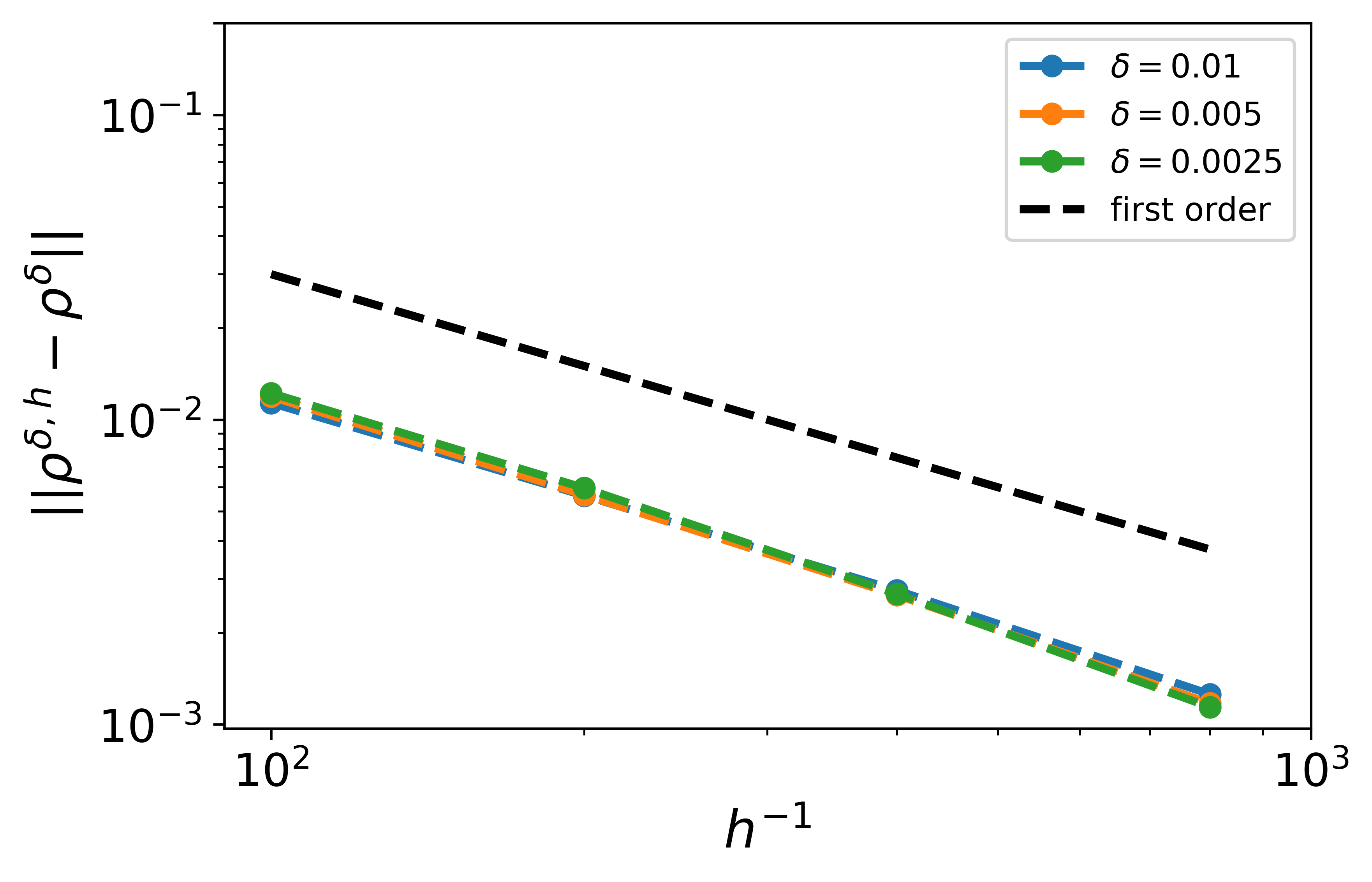}
	\end{subfigure}
	\caption{Experiment 3: Convergence from $\rho^{\delta,h}$ to $\rho^\delta$ for the bell-shaped initial data (\emph{top}) and the Riemann initial data (\emph{bottom}) corresponding to the left endpoint quadrature weights (\emph{left}), the normalized left endpoint quadrature weights (\emph{middle}), and the exact quadrature weights (\emph{right}).}
	\label{fig:exp_3}
\end{figure}
From Figure~\ref{fig:exp_3}, we see that for the normalized left endpoint quadrature weights and the exact quadrature weights, the error $\norm{\rho^{\delta,h} - \rho^\delta}_{\mathbf{L}^1}$ has a linear decay rate with respect to $h$ for both initial data and all choices of $\delta$.
Moreover, the plots of $\norm{\rho^{\delta,h} - \rho^\delta}_{\mathbf{L}^1}$ with respect to $h^{-1}$ have very little change for $\delta=0.01$, $\delta=0.005$, and $\delta=0.0025$. This means that $\rho^{\delta,h}$ converges to $\rho^\delta$ as $h\to0$ uniformly in $\delta$, which validates the conclusion of Theorem~\ref{thm:numerical_convergence}.
In addition, the numerical results show that the convergence is of first order with the particular choices of the initial data and the parameter $\delta$.
In contrast, for the left endpoint numerical quadrature weights, the error $\norm{\rho^{\delta,h} - \rho^\delta}_{\mathbf{L}^1}$ stagnates on the scale of $10^{-1}$ when $h\geq\delta$ for both initial data and all choices of $\delta$.
This may because $\rho^\delta$ approximates $\rho^0$ well when $\delta$ is small while $\rho^{\delta,h}=\rho^{0,h}$ when $h\geq\delta$ and $\rho^{0,h}$ is not a consistent numerical approximation to $\rho^0$.
We also observe that, in each case, the error decays when $h<\delta$. However, the error increases when $\delta$ decreases from $0.01$ to $0.0025$ for any fixed mesh size $h$. One can infer that the convergence from $\rho^{\delta,h}$ to $\rho^\delta$ as $h\to0$ will become slower and slower as $\delta\to0$, and the uniform convergence cannot hold, which is again showing the importance of 
 the normalization condition \eqref{eq:normalization_condition} for the uniform convergence of the scheme \eqref{eq:nonlocal_lwr_num}-\eqref{eq:nonlocal_lwr_num_2}.

{\bf Experiment 4.} We finally test the scheme \eqref{eq:nonlocal_lwr_num}-\eqref{eq:nonlocal_lwr_num_2} with different choices of the nonlocal kernel.
Besides the linear decreasing kernel considered before, we also use the exponential kernel $w_\delta(s)=\frac{e^{-\frac{s}{\delta}}}{\delta(1-e^{-1})}$
and the constant kernel
$
	w_\delta(s) = \frac1\delta
$, and adopt the exact quadrature weights \eqref{eq:quad_weight_exact}.
We take $\delta=mh$ where $m=1,2,5$ and $h=0.01\times 2^{-l}$ for $l=0,1,2,3$, and compute numerical solutions $\rho^{\delta,h}$ using the Lax-Friedrichs numerical flux function \eqref{eq:g_func_LxF} and different numerical quadrature weights.
The local solution $\rho^0$ is numerically solved on a fine grid with $h=0.01\times 2^{-5}$ using a Lax-Friedrichs scheme for \eqref{eq:lwr} with the numerical flux function \eqref{eq:local_LxF}.
For each initial data and each nonlocal kernel, we compute the $\mathbf{L}^1$ error $\norm{\rho^{\delta,h} - \rho^0}_{\mathbf{L}^1}$.
A dashed line with the slope $-1$ is again provided.
See the results in Figure~\ref{fig:exp_4}.

\begin{figure}[htbp]
\centering
	\begin{subfigure}{.32\textwidth}
	\includegraphics[width=\textwidth]{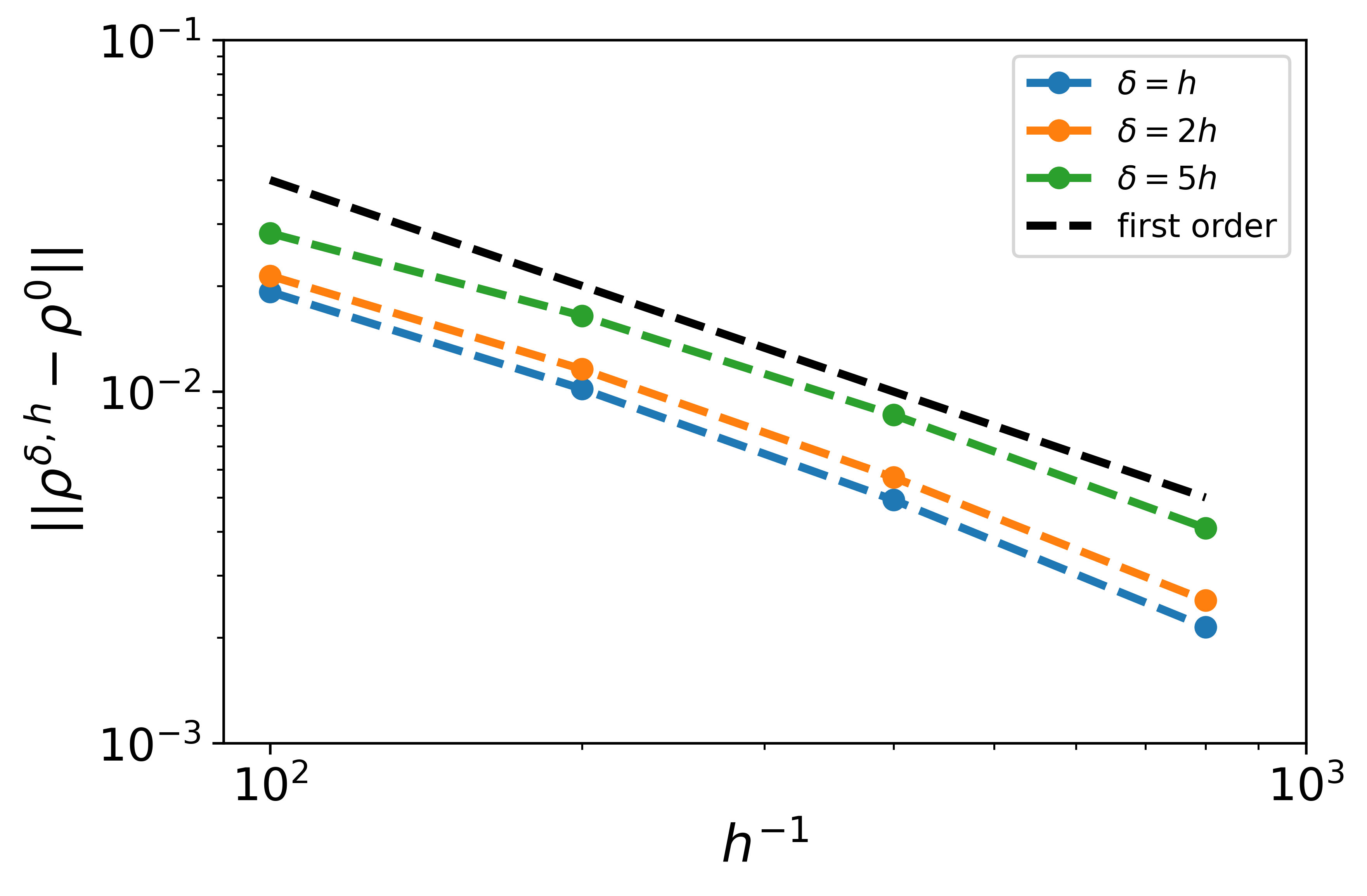}
	\end{subfigure}
	\begin{subfigure}{.32\textwidth}
	\includegraphics[width=\textwidth]{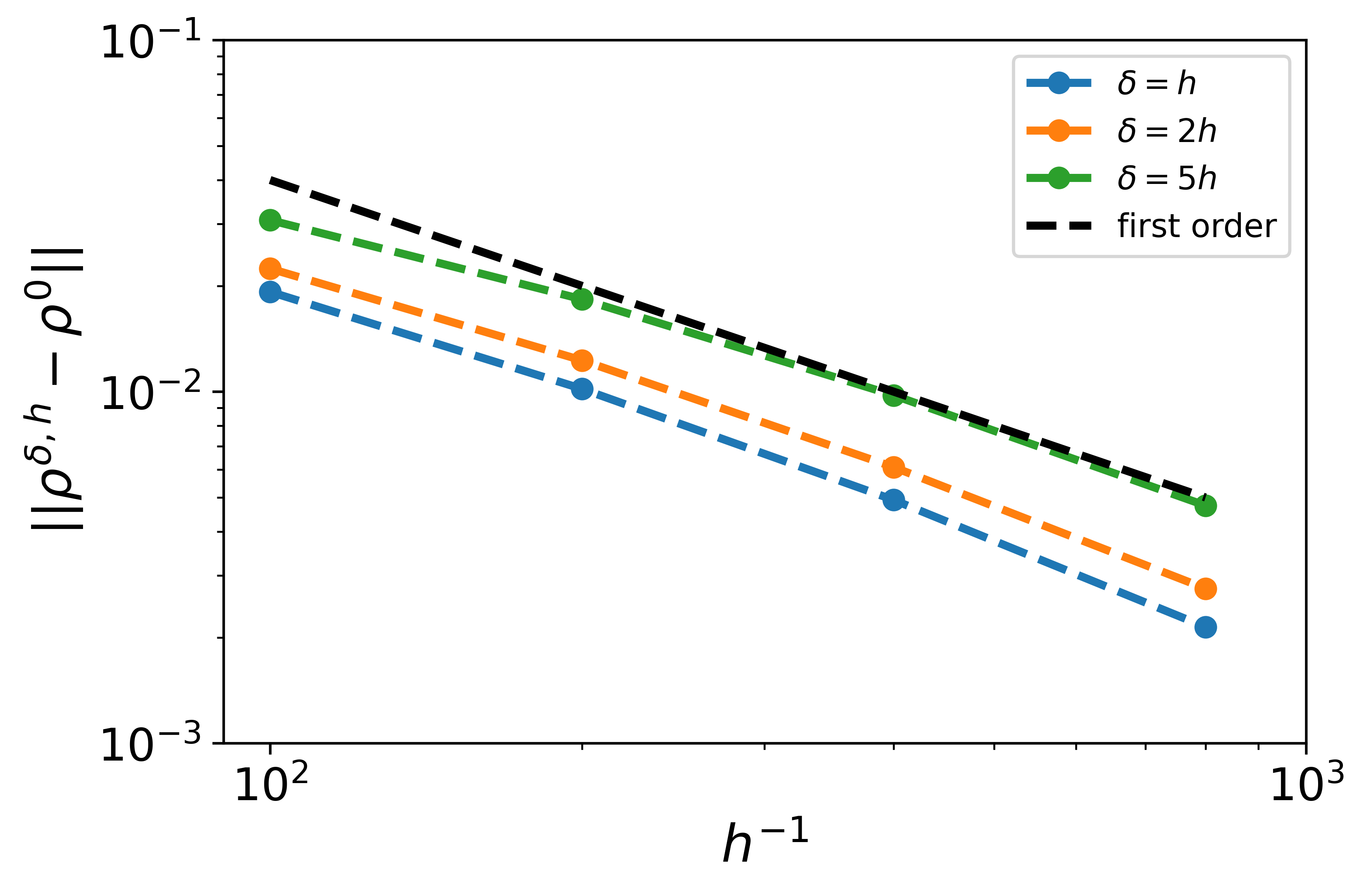}
	\end{subfigure}
	\begin{subfigure}{.32\textwidth}
	\includegraphics[width=\textwidth]{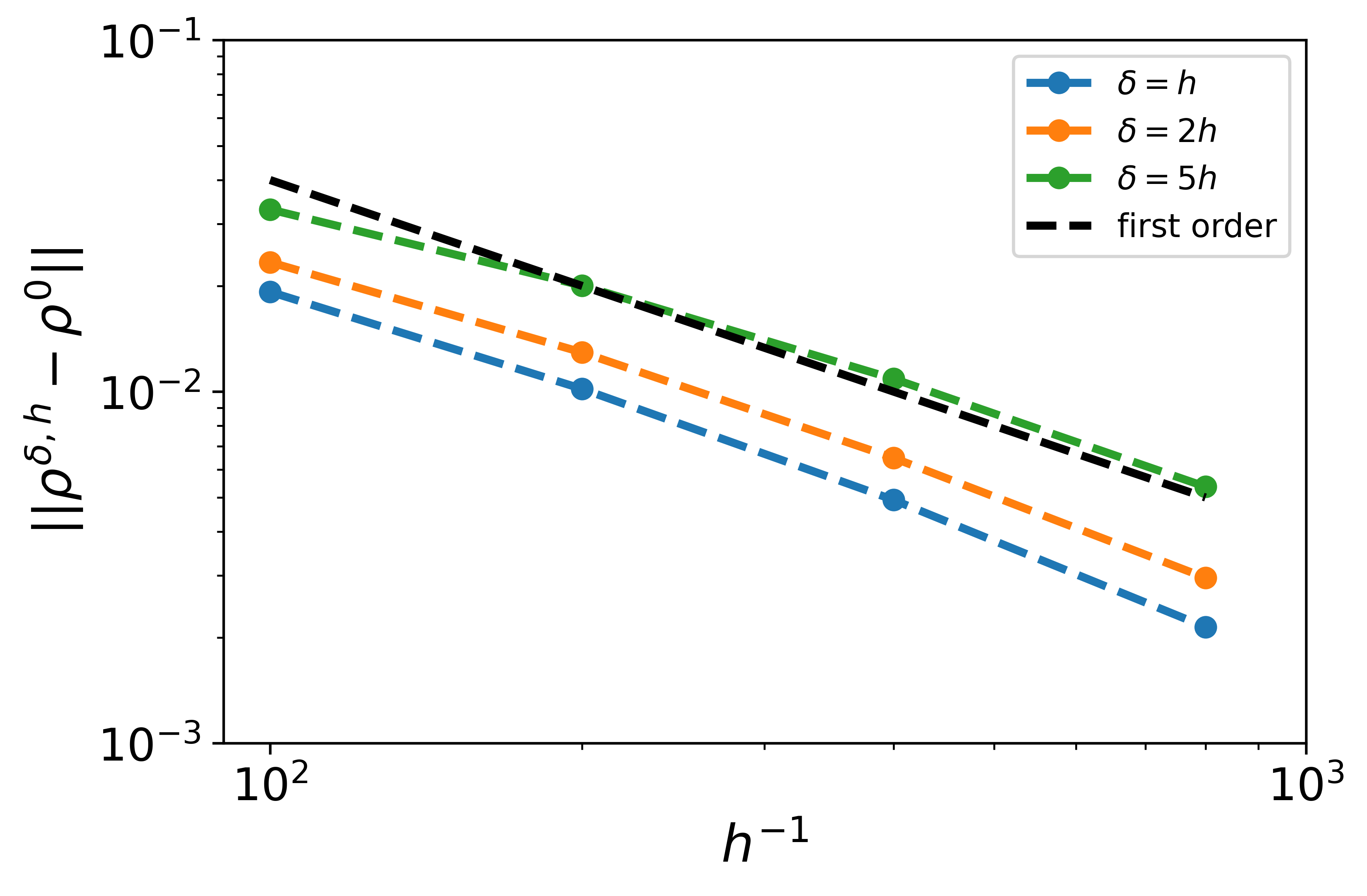}
	\end{subfigure}
	\begin{subfigure}{.32\textwidth}
	\includegraphics[width=\textwidth]{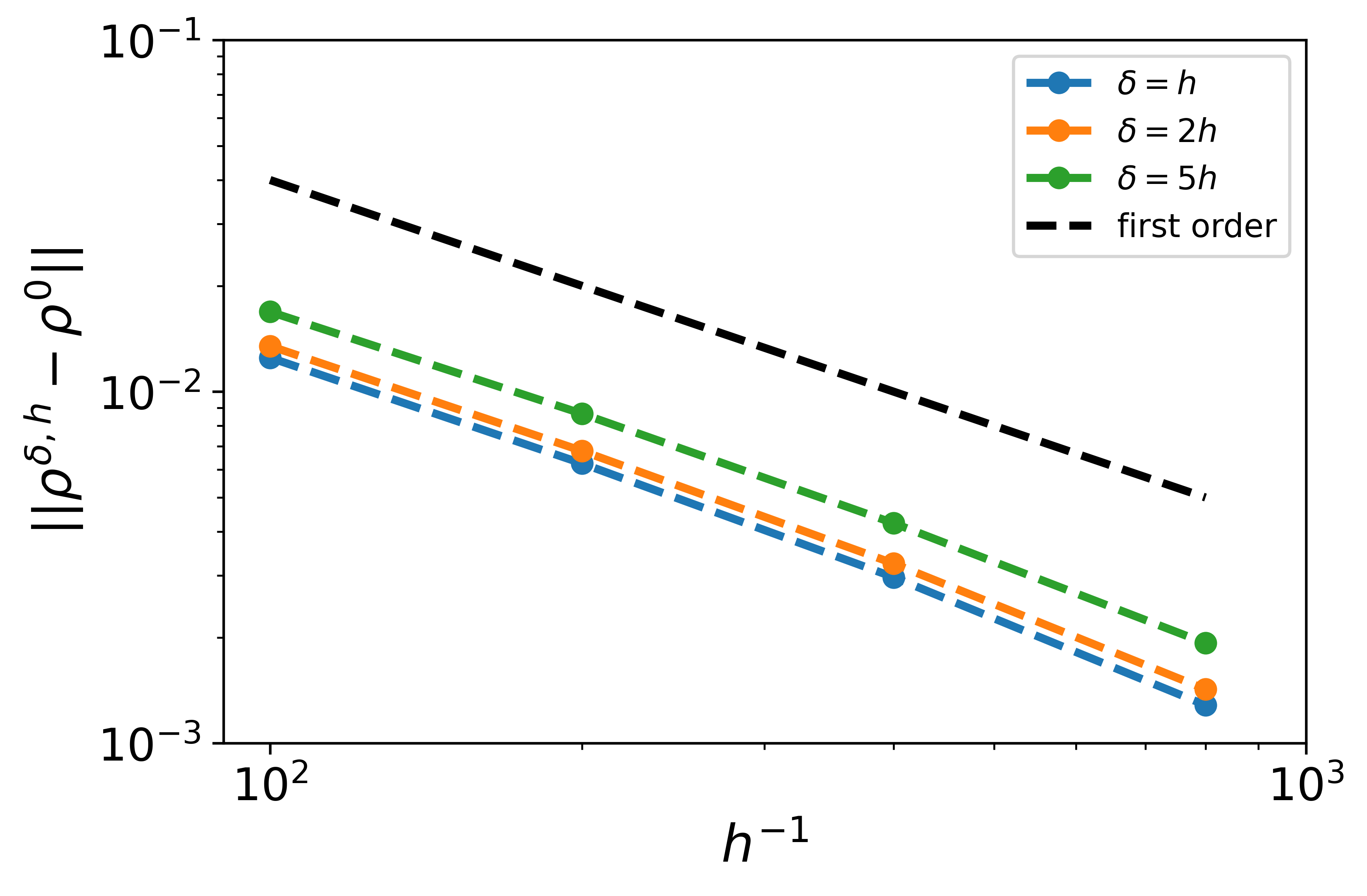}
	\end{subfigure}
	\begin{subfigure}{.32\textwidth}
	\includegraphics[width=\textwidth]{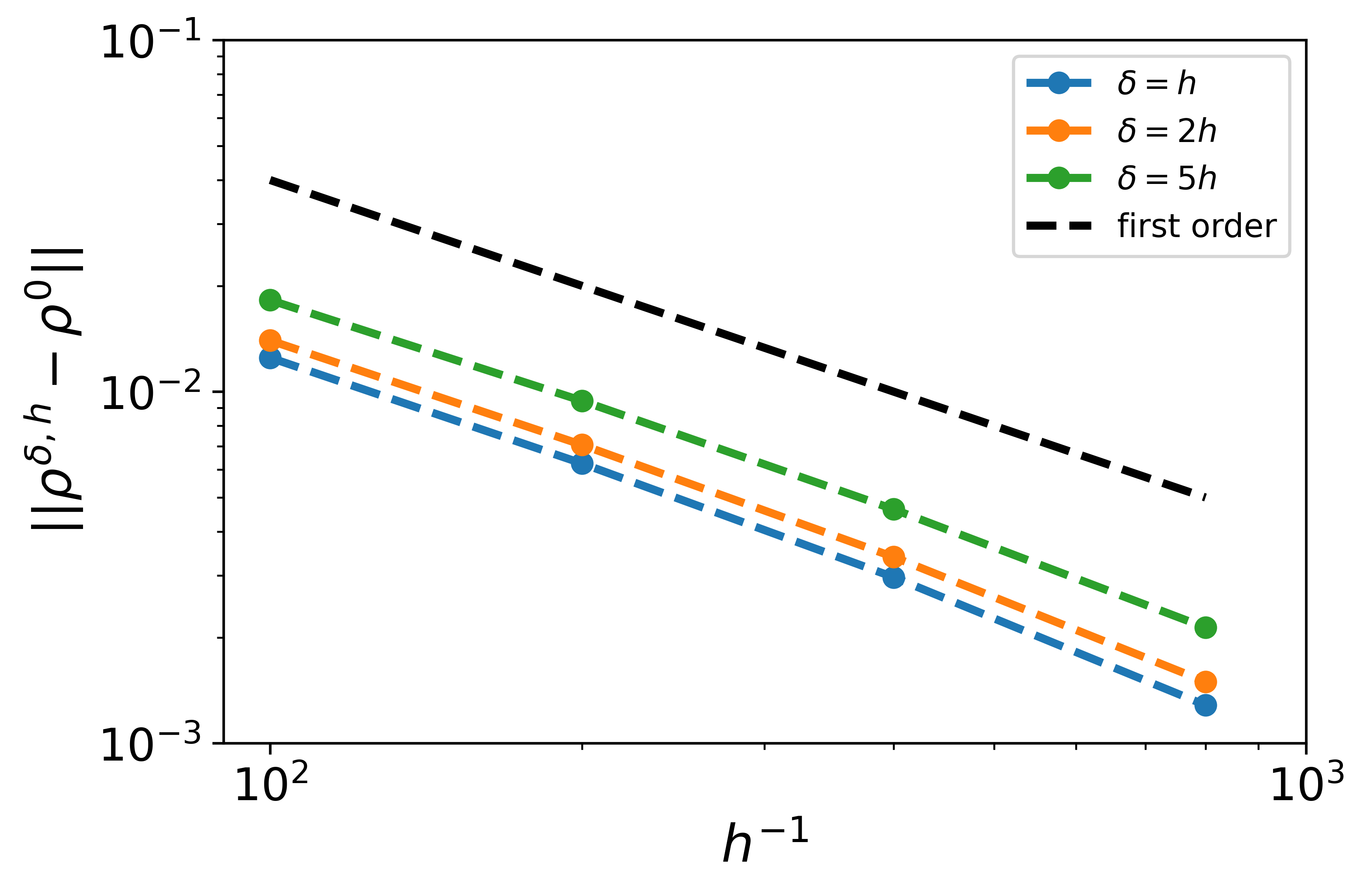}
	\end{subfigure}
	\begin{subfigure}{.32\textwidth}
	\includegraphics[width=\textwidth]{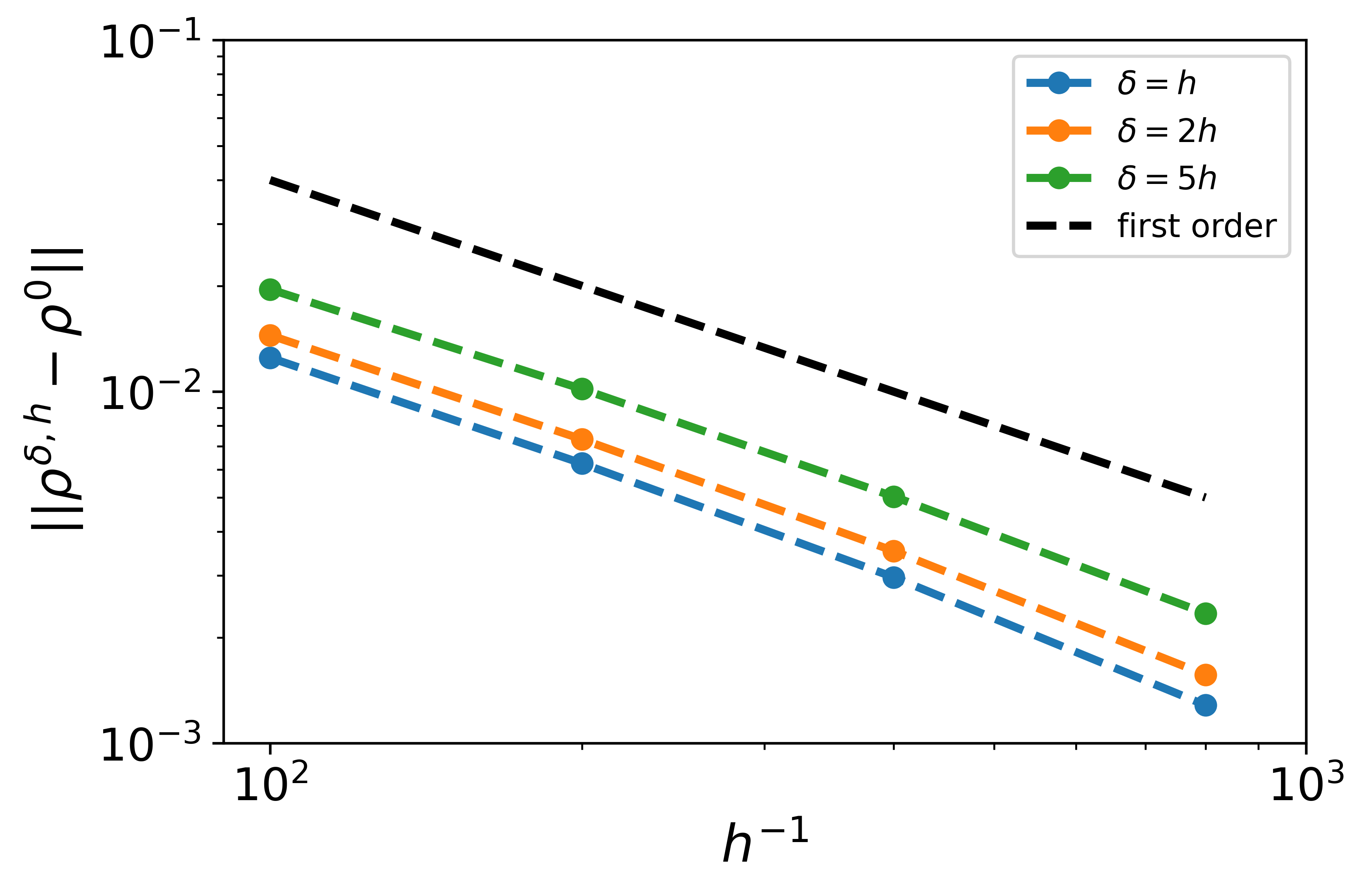}
	\end{subfigure}
	\caption{Experiment 4: Convergence from $\rho^{\delta,h}$ to $\rho^0$ for the bell-shaped initial data (\emph{top}) and the Riemann initial data (\emph{bottom}) corresponding to the linear decreasing kernel (\emph{left}), the exponential kernel (\emph{middle}), and the constant kernel (\emph{right}).}
	\label{fig:exp_4}
\end{figure}

We observe from Figure~\ref{fig:exp_4} that: for all the three nonlocal kernels, the error $\norm{\rho^{\delta,h} - \rho^0}_{\mathbf{L}^1}$ has a linear decay rate with respect to $h$ for both initial data and in all cases $\delta=mh$ for $m=1,2,5$.
Moreover, the plots for the three nonlocal kernels have little difference.
For the linear decreasing kernel and the exponential kernel, the convergence result validates the conclusion of Theorem~\ref{thm:ac}.
For the constant kernel, it does not satisfy the condition that $w=w_\delta(s)$ is strictly decreasing, and \eqref{eq:weight_monotone} does not hold because $w_{k-1}-w_k=0$ for all $k=1,\cdots,m-1$.
In this case, the analysis used in the proof of Theorem~\ref{thm:a_priori_estimates} cannot give the necessary estimates on numerical solutions but the numerical results show that the conclusion of Theorem~\ref{thm:ac} may still be true.

\section{Conclusions and future work}

In this work, finite volume numerical schemes \eqref{eq:nonlocal_lwr_num}-\eqref{eq:nonlocal_lwr_num_2} are studied for solving the nonlocal LWR model \eqref{eq:nonlocal_lwr} with a parameter $\delta$ that measures the range of information exchange. 
An important observation is that,
based on both numerical analysis and computational experiments, certain numerical quadrature weights that provide consistent approximations in the case of a given $\delta>0$ may lead to consistency between the scheme \eqref{eq:nonlocal_lwr_num}-\eqref{eq:nonlocal_lwr_num_2} and the local limit \eqref{eq:lwr} of the nonlocal model \eqref{eq:nonlocal_lwr} as $\delta\to 0$ and $h\to 0$. For properly selected numerical quadrature weights, we are able to prove, under reasonable assumptions that the numerical solutions of the nonlocal model converge to the continuum solution of the nonlocal model with a fixed $\delta>0$ as $h\to 0$, while they converge to the entropy solution of the local continuum model \eqref{eq:lwr} as $\delta\to0$ and $h\to 0$ simultaneously.
That is,  such schemes are asymptotically compatible with its local limit.
We are able to demonstrate that these asymptotically compatible schemes can offer robust numerical simulations under the changes in $\delta$ due to the uniform convergence when the values of $\delta$ are within a proper range.

Our established results are based on the a priori estimates on the numerical solutions as given in Theorem~\ref{thm:a_priori_estimates}, subject to assumptions alluded to above.
As shown in the computational experiments, the normalization condition for numerical quadrature weights is essential to the asymptotically compatibility of the scheme \eqref{eq:nonlocal_lwr_num}-\eqref{eq:nonlocal_lwr_num_2}.
The experiments also suggest that the results of this work may be extended to the cases with more general nonlocal kernels and numerical flux functions. 
It might also be possible to establish the results with more general velocity functions $v=v(\rho)$ other than the linear one $v(\rho)=1-\rho$ used here and also more general initial data that may have negative jumps. 
Furthermore, with the a priori bounds on the numerical solutions and known estimates on the exact solutions, it is possible to derive a priori error estimates subject to suitable conditions on the regularities of continuum solutions.
These questions along with further generalizations and applications of nonlocal traffic flow models will be subjects of future research.

\bibliographystyle{siam}
\bibliography{ref}

\begin{thebibliography}{10}

\bibitem{Aggarwal2015}
{\sc A.~Aggarwal, R.~M. Colombo, and P.~Goatin}, {\em Nonlocal systems of
  conservation laws in several space dimensions}, SIAM Journal on Numerical
  Analysis, 53 (2015), pp.~963--983.

\bibitem{Amorim2015}
{\sc P.~Amorim, R.~M. Colombo, and A.~Teixeira}, {\em On the numerical
  integration of scalar nonlocal conservation laws}, ESAIM: Mathematical
  Modelling and Numerical Analysis, 49 (2015), pp.~19--37.

\bibitem{Berthelin2019}
{\sc F.~Berthelin, , P.~Goatin, and and}, {\em Regularity results for the
  solutions of a non-local model of traffic flow}, Discrete {\&} Continuous
  Dynamical Systems - A, 39 (2019), pp.~3197--3213.

\bibitem{Betancourt2011}
{\sc F.~Betancourt, R.~B{\"u}rger, K.~H. Karlsen, and E.~M. Tory}, {\em On
  nonlocal conservation laws modelling sedimentation}, Nonlinearity, 24 (2011),
  p.~855.

\bibitem{Blandin2016}
{\sc S.~Blandin and P.~Goatin}, {\em Well-posedness of a conservation law with
  non-local flux arising in traffic flow modeling}, Numerische Mathematik, 132
  (2016), pp.~217--241.

\bibitem{brenier1988discrete}
{\sc Y.~Brenier and S.~Osher}, {\em The discrete one-sided lipschitz condition
  for convex scalar conservation laws}, SIAM Journal on Numerical Analysis, 25
  (1988), pp.~8--23.

\bibitem{bressan2020entropy}
{\sc A.~Bressan and W.~Shen}, {\em Entropy admissibility of the limit solution
  for a nonlocal model of traffic flow}, arXiv preprint arXiv:2011.05430,
  (2020).

\bibitem{bressan2019traffic}
\leavevmode\vrule height 2pt depth -1.6pt width 23pt, {\em On traffic flow with
  nonlocal flux: a relaxation representation}, Archive for Rational Mechanics
  and Analysis, 237 (2020), pp.~1213--1236.

\bibitem{burger2020non}
{\sc R.~B{\"u}rger, P.~Goatin, D.~Inzunza, and L.~M. Villada}, {\em A non-local
  pedestrian flow model accounting for anisotropic interactions and walking
  domain boundaries}, Mathematical biosciences and engineering, 17 (2020),
  pp.~5883--5906.

\bibitem{Chalons2018}
{\sc C.~Chalons, P.~Goatin, and L.~M. Villada}, {\em High-order numerical
  schemes for one-dimensional nonlocal conservation laws}, {SIAM} Journal on
  Scientific Computing, 40 (2018), pp.~A288--A305.

\bibitem{Chiarello2019}
{\sc F.~A. Chiarello, P.~Goatin, and E.~Rossi}, {\em Stability estimates for
  non-local scalar conservation laws}, Nonlinear Analysis: Real World
  Applications, 45 (2019), pp.~668--687.

\bibitem{coclite2020general}
{\sc G.~M. Coclite, J.-M. Coron, N.~De~Nitti, A.~Keimer, and L.~Pflug}, {\em A
  general result on the approximation of local conservation laws by nonlocal
  conservation laws: The singular limit problem for exponential kernels},
  Annales de l'Institut Henri Poincar{\'e} C,  (2022).

\bibitem{colombo2021role}
{\sc M.~Colombo, G.~Crippa, M.~Graff, and L.~V. Spinolo}, {\em On the role of
  numerical viscosity in the study of the local limit of nonlocal conservation
  laws}, ESAIM: Mathematical Modelling and Numerical Analysis, 55 (2021),
  pp.~2705--2723.

\bibitem{colombo2021local}
{\sc M.~Colombo, G.~Crippa, E.~Marconi, and L.~V. Spinolo}, {\em Local limit of
  nonlocal traffic models: convergence results and total variation blow-up},
  Annales de l'Institut Henri Poincar{\'e} C, Analyse non lin{\'e}aire, 38
  (2021), pp.~1653--1666.

\bibitem{colombo2022nonlocal}
\leavevmode\vrule height 2pt depth -1.6pt width 23pt, {\em Nonlocal traffic
  models with general kernels: singular limit, entropy admissibility, and
  convergence rate}, arXiv preprint arXiv:2206.03949,  (2022).

\bibitem{colombo2019singular}
{\sc M.~Colombo, G.~Crippa, and L.~V. Spinolo}, {\em On the singular local
  limit for conservation laws with nonlocal fluxes}, Archive for Rational
  Mechanics and Analysis, 233 (2019), pp.~1131--1167.

\bibitem{Colombo2012}
{\sc R.~M. Colombo, M.~Garavello, and M.~L{\'e}cureux-Mercier}, {\em A class of
  nonlocal models for pedestrian traffic}, Mathematical Models and Methods in
  Applied Sciences, 22 (2012), p.~1150023.

\bibitem{Colombo2018}
{\sc R.~M. Colombo and E.~Rossi}, {\em Nonlocal conservation laws in bounded
  domains}, SIAM Journal on Mathematical Analysis, 50 (2018), pp.~4041--4065.

\bibitem{evans2018measure}
{\sc L.~C. Evans and R.~F. Garzepy}, {\em Measure theory and fine properties of
  functions}, Routledge, 2018.

\bibitem{filbet2010class}
{\sc F.~Filbet and S.~Jin}, {\em A class of asymptotic-preserving schemes for
  kinetic equations and related problems with stiff sources}, Journal of
  Computational Physics, 229 (2010), pp.~7625--7648.

\bibitem{friedrich2022lyapunov}
{\sc J.~Friedrich, S.~G{\"o}ttlich, and M.~Herty}, {\em Lyapunov stabilization
  for nonlocal traffic flow models}, arXiv preprint arXiv:2209.05256,  (2022).

\bibitem{friedrich2022conservation}
{\sc J.~Friedrich, S.~G{\"o}ttlich, A.~Keimer, and L.~Pflug}, {\em Conservation
  laws with nonlocal velocity--the singular limit problem}, arXiv preprint
  arXiv:2210.12141,  (2022).

\bibitem{friedrich2019maximum}
{\sc J.~Friedrich and O.~Kolb}, {\em Maximum principle satisfying cweno schemes
  for nonlocal conservation laws}, SIAM Journal on Scientific Computing, 41
  (2019), pp.~A973--A988.

\bibitem{friedrich2018godunov}
{\sc J.~Friedrich, O.~Kolb, and S.~G{\"o}ttlich}, {\em A godunov type scheme
  for a class of lwr traffic flow models with non-local flux}, arXiv preprint
  arXiv:1802.07484,  (2018).

\bibitem{Goatin2019}
{\sc P.~Goatin and E.~Rossi}, {\em Well-posedness of {IBVP} for 1{D} scalar
  non-local conservation laws}, ZAMM-Journal of Applied Mathematics and
  Mechanics/Zeitschrift f{\"u}r Angewandte Mathematik und Mechanik, 99 (2019),
  p.~e201800318.

\bibitem{goatin2016well}
{\sc P.~Goatin and S.~Scialanga}, {\em Well-posedness and finite volume
  approximations of the {LWR} traffic flow model with non-local velocity},
  Networks and Hetereogeneous Media, 11 (2016), pp.~107--121.

\bibitem{Goettlich2014}
{\sc S.~G{\"o}ttlich, S.~Hoher, P.~Schindler, V.~Schleper, and A.~Verl}, {\em
  Modeling, simulation and validation of material flow on conveyor belts},
  Applied mathematical modelling, 38 (2014), pp.~3295--3313.

\bibitem{huang2022stability}
{\sc K.~Huang and Q.~Du}, {\em Stability of a nonlocal traffic flow model for
  connected vehicles}, SIAM Journal on Applied Mathematics, 82 (2022),
  pp.~221--243.

\bibitem{jin1999efficient}
{\sc S.~Jin}, {\em Efficient asymptotic-preserving (ap) schemes for some
  multiscale kinetic equations}, SIAM Journal on Scientific Computing, 21
  (1999), pp.~441--454.

\bibitem{jin2010asymptotic}
\leavevmode\vrule height 2pt depth -1.6pt width 23pt, {\em Asymptotic
  preserving (ap) schemes for multiscale kinetic and hyperbolic equations: a
  review}, Lecture notes for summer school on methods and models of kinetic
  theory (M\&MKT), Porto Ercole (Grosseto, Italy),  (2010), pp.~177--216.

\bibitem{karafyllis2020analysis}
{\sc I.~Karafyllis, D.~Theodosis, and M.~Papageorgiou}, {\em Analysis and
  control of a non-local {PDE} traffic flow model}, International Journal of
  Control, 0 (2020), pp.~1--34.

\bibitem{keimer2019approximation}
{\sc A.~Keimer and L.~Pflug}, {\em On approximation of local conservation laws
  by nonlocal conservation laws}, Journal of Mathematical Analysis and
  Applications, 475 (2019), pp.~1927--1955.

\bibitem{keimer2022singular}
\leavevmode\vrule height 2pt depth -1.6pt width 23pt, {\em On the singular
  limit problem for a discontinuous nonlocal conservation law}, arXiv preprint
  arXiv:2212.12598,  (2022).

\bibitem{lefloch2002hyperbolic}
{\sc P.~G. LeFloch}, {\em Hyperbolic Systems of Conservation Laws: The theory
  of classical and nonclassical shock waves}, Springer Science \& Business
  Media, 2002.

\bibitem{leveque2002finite}
{\sc R.~J. LeVeque et~al.}, {\em Finite volume methods for hyperbolic
  problems}, vol.~31, Cambridge university press, 2002.

\bibitem{lighthill1955kinematic}
{\sc M.~J. Lighthill and G.~B. Whitham}, {\em On kinematic waves {II}. {A}
  theory of traffic flow on long crowded roads}, Proc. R. Soc. Lond. A, 229
  (1955), pp.~317--345.

\bibitem{richards1956shock}
{\sc P.~I. Richards}, {\em Shock waves on the highway}, Operations research, 4
  (1956), pp.~42--51.

\bibitem{ridder2019traveling}
{\sc J.~Ridder and W.~Shen}, {\em Traveling waves for nonlocal models of
  traffic flow}, Discrete \& Continuous Dynamical Systems-A, 39 (2019),
  p.~4001.

\bibitem{rossi2020well}
{\sc E.~Rossi, J.~Wei{\ss}en, P.~Goatin, and S.~G{\"o}ttlich}, {\em
  Well-posedness of a non-local model for material flow on conveyor belts},
  ESAIM: Mathematical Modelling and Numerical Analysis, 54 (2020),
  pp.~679--704.

\bibitem{shen2018traveling}
{\sc W.~Shen}, {\em Traveling waves for conservation laws with nonlocal flux
  for traffic flow on rough roads}, Networks and Heterogeneous Media, 14
  (2019), pp.~709--732.

\bibitem{tadmor1984large}
{\sc E.~Tadmor}, {\em The large-time behavior of the scalar, genuinely
  nonlinear lax-friedrichs scheme}, Mathematics of computation, 43 (1984),
  pp.~353--368.

\bibitem{tian2014asymptotically}
{\sc X.~Tian and Q.~Du}, {\em Asymptotically compatible schemes and
  applications to robust discretization of nonlocal models}, SIAM Journal on
  Numerical Analysis, 52 (2014), pp.~1641--1665.

\bibitem{tian2020asymptotically}
\leavevmode\vrule height 2pt depth -1.6pt width 23pt, {\em Asymptotically
  compatible schemes for robust discretization of parametrized problems with
  applications to nonlocal models}, SIAM Review, 62 (2020), pp.~199--227.

\end{thebibliography}
\end{document}